\documentclass[a4paper]{amsart}
\input{preamble}

\begin{document}
\title{Relative dendroidal Rezk nerve and applications}
\author{K.\ Arakawa, V.\ Carmona, and F.\ Pratali}
\keywords{relative $\infty$-operads, localization, higher algebra, operadic nerve}
\subjclass{18Nxx, 18N55, 18N70, 18M60, 18M75, 55U35}
\begin{abstract}
We extend the dendroidal Rezk nerve to the setting of relative $\infty$-operads. 
Our main theorem relates it to localization of $\infty$-operads, generalizing a theorem of Mazel-Gee. 
By exploiting the relation, we obtain a surprisingly effective tool to prove localization results in operadic contexts. 
As applications, we obtain a number of new results on operadic localizations, including a generalization of Willwacher's recent result on cyclic operads and operadic modules, and a description of locally constant factorization algebras on spheres in terms of discrete geometry.
\end{abstract}

\maketitle

\tableofcontents


\section*{Introduction}
%
%
%
%

A recurring theme in higher category theory is that a non-trivial $\infty$-category
$\cat C$ is presented as the localization of a more tractable $\infty$-
(or even ordinary) category $\cat D$. This perspective is extremely
useful, as it unlocks a powerful universal property that reduces various
categorical constructions associated with $\cat C$ to those of $\cat D$. 

There is a parallel story in the theory of higher operads, where many
nontrivial $\infty$-operads are obtained from a more tractable $\infty$-operad
by inverting a collection of unary operations \cite{arakawa_relative_2025,basterra_inverting_2018,benini_equivalence_2026,benini_c-categorical_2026,benini_strictification_2023,benini_operads_2025,calaque_not_2026, calaque_algebras_2026,carmona_additivity_2025,lurie_higher_nodate,karlsson_assembly_2026,pratali_root_2025}.
The utility of localization is perhaps even greater in the operadic
setting, because constructing maps of $\infty$-operads is even more
difficult than constructing functors of $\infty$-categories. Because
of this, there is now a growing demand to identify $\infty$-operads
as localizations of other $\infty$-operads. However, the theory of
operadic localization is still under development, and proving
localization results of $\infty$-operads can be quite challenging.

This paper contributes to the theory of operadic localization by developing techniques to analyze localization of $\infty$-operads.
This will be made possible by understanding the relation between localization and the \emph{dendroidal Rezk nerve}. Recall that this is the fully faithful functor from the $\infty$-category of $\infty$-operads into that of dendroidal spaces, defined by the following formula:
\begin{align*}
	\dN\from\Op_{\infty} & \xhookrightarrow{\quad}\d\cS=\Fun(\Den^{\op},\cS)\\
\cat O & \longmapsto\pr{T\mapsto\Map_{\Op_\infty}\pr{T,\cat O}}.
\end{align*}
We can generalize this
to \emph{relative $\infty$-operads} $\pr{\cat O,\cat W}$, which are pairs consisting of an $\infty$-operad $\cat O$ equipped with a wide subcategory
$\cat W$ of the $\infty$-category of unary operations of $\cat O$.
For such a pair, we define the \emph{relative dendroidal Rezk nerve}
$\dN\pr{\cat O,\cat W}$ by
\[
\dN\pr{\cat O,\cat W}_{T}=\abs{\Alg_{T}\pr{\cat O}\!\,^{\cat W}},
\]
where $\abs -\from\Cat_{\infty}\to\cS$ denotes the groupoid
completion, and $\Alg_{T}\pr{\cat O}\!\,^{\cat W}$ denotes the category
of operad maps $T\to\cat O$ and natural transformations whose components
belong to $\cat W$. Our result asserts that, by applying the left adjoint $\OO\from \d \cat S \to \Opinfty$ of $\dN$ to the  map $\dN (\cat O) \to \dN(\cat O, \cat W)$, we get a localization:

\begin{customthm}[Theorem {\ref{thm:MG}}]
\label{thm:main}

The map
\[
\cat O\simeq\OO\circ\dN\pr{\cat O}\longrightarrow\OO\circ\dN\pr{\cat O,\cat W}
\]
exhibits its target as the localization of $\cat O$ at $\cat W$.

\end{customthm}

Theorem \ref{thm:main} is a direct generalization of Mazel-Gee's
theorem \cite[Theorem 3.8]{mazel-gee_universality_2019}, which establishes a version of Theorem \ref{thm:main} for
$\infty$-categories and simplicial spaces. But we remark that his original argument does not readily extend to the operadic
setting. Instead, our proof of Theorem \ref{thm:main} relies on a
recent advance in the understanding of the mechanism underlying his
theorem, due to the first author and Bastiaan Cnossen \cite{arakawa_short_2026}. 

Theorem \ref{thm:main} has profound implications in the study of localization of $\infty$-operads. 
Essentially, it provides an alternative presentation of localization, which looks very different from the one given by its universal property. 
The main thrust of this paper is that, by carefully exploiting this presentation, one can deduce a number of new localization results and simplify the proof of existing results.
We demonstrate this by providing numerous applications, which we summarize in the (non-exhaustive) list below.
\begin{enumerate}[wide, labelwidth=!, label = (\arabic*), itemsep = 1\baselineskip, topsep=1\baselineskip]
	\item Willwacher recently discovered that, over characteristic 0, the homotopy category of dg cyclic operads embeds fully faithfully into that of pointed operadic modules \cite{willwacher_cyclic_2024}. We show that this phenomenon happens not only over positive characteristics, but also in any symmetric monoidal $\infty$-category:

\begin{customcor}[Corollary {\ref{cor:cyc_opd_mod_infty}}]\label{cor:cycmod}
	For any symmetric monoidal $\infty$-category $\cat C$, the forgetful functor from the $\infty$-category of cyclic operads in $\cat{C}$ into that of pointed operadic modules in $\cat{C}$ is fully faithful. We can also identify the essential image.
\end{customcor}
\noindent
We prove Corollary \ref{cor:cycmod} by showing that the operad governing cyclic operads is the localization of the operad governing pointed operadic modules.

\item We establish a number of new results on prefactorization algebras, a rich source of localization problems. For instance, we will show that the continuous geometry of spheres can be described by the discrete geometry of lattices: 

\begin{customcor}[Corollary {\ref{cor: First and a half Lattice Localization}} and Proposition {\ref{prop: Second Lattice Localization}}]
	\label{cor:sphere_discrete}
		Locally constant (pre)factorization algebras over the $n$-sphere $\mathbb{S}^n$ are equivalent to those defined over cones on  $\mathbb{Z}^{n+1}$.
\end{customcor}

\noindent
We will also relate the little cubes operads with Lorentzian geometry, where disjointness is replaced by causal independence:

\begin{customcor}[Proposition {\ref{prop: Second Lorentzian localization}}]
	\label{cor:little_minkowski}
Locally constant prefactorization algebras on $m$-dimensional Minkowski spacetime defined over the Lorentzian analog of open disks are equivalent to $\mathbb{E}_{m-1}$-algebras.
\end{customcor}

\noindent
We remark that Corollaries \ref{cor:sphere_discrete} and \ref{cor:little_minkowski} have direct implications in the theory of superselection sectors for lattice quantum systems \cite{benini_c-categorical_2026}, and for algebraic quantum field theories on Minkowski spacetime \cite{benini_prefactorization_2026}.

In addition to these results, we also solve localization problems related to the time-slice axiom of various kinds of prefactorization algebras defined over a fixed spacetime (Proposition \ref{prop: First Lorentzian localization} and Corollary \ref{cor: CLFs for Lorentzian pFAs}). Our conclusions imply various strictification theorems for the time-slice axiom in the spirit of \cite{benini_strictification_2023}.

\item The underlying $\infty$-category of a symmetric monoidal model category $\mathbf{V}$ admits a symmetric monoidal structure, given by the derived tensor product. A theorem of Nikolaus--Scholze characterizes this symmetric monoidal $\infty$-category by a universal property \cite[Theorem A.7]{nikolaus_topological_2018}. We give a concise proof of this theorem:
\begin{customcor}[Theorem \ref{thm:NS}]
	\label{cor:NS}
	The underlying symmetric monoidal $\infty$-category of $\mathbf{V}$ has the universal property of lax symmetric monoidal localization.
\end{customcor}
We also record a version of Corollary \ref{cor:NS} for more general class of symmetric monoidal $\infty$-categories, called symmetric monoidal $\infty$-categories with cofibrations and weak equivalences (Theorem \ref{thm:NS}).

\item We generalize the classical theory of calculus of fractions \cite{gabriel_calculus_1967} to the setting of operads:
	\begin{customcor}[Theorem \ref{thm:lcalc}]
	\label{cor:lcalc}
	Let $(\cat O, \cat W)$ be a relative (1-)operad admitting the calculus of left or right fractions. Then the $\infty$-operadic localization $\cat O [\cat W^{-1}]$ is the (1-)operad obtained by identifying zigzags of arrows (or fractions) using natural equivalence relations on them.
\end{customcor}
\end{enumerate}

In essence, Corollary \ref{cor:lcalc} says that localizations of certain discrete operads carry no higher homotopical information (and admit an explicit point-set model).
This is important, because such information can have far-reaching consequences. 
For example, the $\infty$-categorical equivalence between time-orderable prefactorization algebras and AQFTs (algebraic quantum field theories), which is still open, can be reduced to the discreteness of a localization of an operad controlling time-orderable prefactorization algebras \cite[Open Problem 5.6]{benini_equivalence_2026}.
Also, in \cite{benini_equivalence_2026}, the authors obtained a strictification theorem for the time-slice axiom for AQFTs by proving the discreteness of the operad controlling them \cite[Theorem 5.1 and Proposition 5.5]{benini_equivalence_2026}.
The key ingredient there was the operadic calculus of left fractions, which the authors were the first to consider.
Thus, calculus of fractions remains to be an important tool in the setting of $\infty$-operads, and Corollary \ref{cor:lcalc} contributes to a greater understanding of this tool.

\subsection*{Outline of the paper}

This paper consists of 6 sections and 2 appendices. 

The first two sections are the theoretical core of this paper. In Section \ref{sec:mainthm},
we prove the main theorem. In Section \ref{sec:criteria}, we use
the main theorem to establish several criteria to detect localizations
of $\infty$-operads and, more generally, maps of relative $\infty$-operads inducing
equivalences between the localizations. Together, the results in these
sections lay the foundation of understanding operadic localization
via the relative dendroidal Rezk nerve.

The next four sections will provide sample applications of the results
in the previous sections. In Section \ref{sec:modelcats}, we give a short proof of Nikolaus--Scholze's result on derived tensor
product. In Section \ref{sec:CLFs}, we develop operadic calculus
of fractions. Section \ref{sec:prefact} is about applications to
prefactorization algebras. In Section \ref{sec:cyclicoperads}, we
extend Willwacher's result to arbitrary symmetric monoidal $\infty$-categories.
These sections are mostly independent of each other, so readers can
jump into any section that interests them right after reading Sections
\ref{sec:mainthm} and \ref{sec:criteria}. The situation is summarized
in Figure \ref{fig:toc}.

%
%
\begin{figure}
	\begin{center}
		\begin{tikzpicture}[node distance =1.5cm]
			\node (s1) [sec] {1. Main theorem};
			\node (s2) [sec, below of=s1] {2. Criterion};
			\node (s3) [sec, below of=s2, xshift=-4cm] {3. Derived tensor};
			\node (s4) [sec, below of=s3, xshift=2cm] {4. Fractions};
			\node (s6) [sec, below of=s2, xshift=4cm] {6. Cyclic operads};
			\node (s5) [sec, below of=s6,xshift=-2cm] {5. PFAs};

			\draw [arrow] (s1) -- (s2);
			\draw [arrow] (s2) -- (s3);
			\draw [arrow] (s2) -- (s4);
			\draw [arrow, dashed] (s4) -- (s5);
			\draw [arrow] (s2) -- (s6);
			\draw [arrow] (s2) -- (s5);
		\end{tikzpicture}
	\end{center}
	\caption{}
	\label{fig:toc}
\end{figure}

The appendices contain definitions and results that are well-known
but are hard to find in the literature. As such, they should be consulted only when the need arises.

\subsection*{Notation and convention}

Throughout the paper, we make extensive use of \emph{$\infty$-categories}.
In particular, unless stated otherwise, various categorical constructions
(colimits, localizations, etc.) are performed in the realm of $\infty$-categories.
For the most part, readers can ignore which model of $\infty$-categories
is used, but when pressed, we mean quasicategories in the sense of
Joyal \cite{joyal_quasi-categories_2002}, which is the most well-developed model of $\pr{\infty,1}$-categories.
Likewise, we use the term \emph{spaces} or \emph{$\infty$-groupoids} to mean Kan complexes or
topological spaces. 

In a similar spirit, most of our discussion on \emph{$\infty$-operads}
does not depend on a specific model of colored $\pr{\infty,1}$-operads.
We try to be as ``model-agnostic'' as possible in the main body
of the paper, as our argument makes sense in any of the established models
of $\infty$-operads, such as Lurie's model of $\infty$-operads.
(We refer to \cite[$\S$ 1.2]{arakawa_relative_2025} for a review of existing
models of $\pr{\infty,1}$-operads.) 

A special case of $\infty$-operads comes from symmetric monoidal $\infty$-categories.
If $\pr{\cat C,\t}$ is a symmetric monoidal $\infty$-category, we
denote by $\cat C^{\t}$ the associated $\infty$-operad. (In Lurie's
model of $\infty$-operads and symmetric monoidal $\infty$-categories,
$\pr{\cat C,\t}$ is \textit{equal} to $\cat C^{\t}$. We generally
prefer the notation $\pr{\cat C,\t}$ or $\cat C$ for symmetric monoidal
$\infty$-categories, as it is closer to the standard convention in
ordinary category theory.)

If $\cat{C}$ is an $\infty$-category, its maximal subgroupoid will be denoted by $\cat{C}^{\simeq}$.

The $\infty$-categories of $\infty$-categories, $\infty$-groupoids
(or spaces), and $\infty$-operads, are denoted by $\Cat_{\infty}, \cat{S}$, and $\Op_{\infty},$
respectively. We write $\mathrm{Op}$ for the (ordinary) category
of ordinary operads. 
We also write $\Rel\Cat_\infty\subset \Fun([1],\Cat_\infty)$ for the full subcategory spanned by the essentially surjective monomorphisms in $\Cat_\infty$, and refer to its objects as \emph{relative $\infty$-categories}. We typically denote its objects as pairs $(\cat{C},\cat{W})$ instead of $\cat{W}\hookrightarrow \cat{C}$.

If $\cat O$ is an $\infty$-operad, we define
its \emph{underlying $\infty$-category} $U\pr{\cat O}$ as the
$\infty$-category consisting of the unary operations of $\cat O$. The functor $U$ is the right adjoint to the inclusion of $\infty$-categories into $\infty$-operads: 
\[
\begin{tikzcd}
	{\Cat_\infty}  & 
	{\Opinfty \from U.} \ar[l, shift right=-1.8,"\scalebox{0.8}{$\perp$}"']\ar[l, shift right=1.8, hookleftarrow]
\end{tikzcd}
\]
Also, if $\cat O$ and $\cat P$ are $\infty$-operads, the $\infty$-category
of maps of operads $\cat O\to\cat P$ will be denoted by $\Alg_{\cat O}\pr{\cat P}$. 


\subsection*{Acknowledgments} We are grateful to G.\ Horel for suggesting that Willwacher's theorem on cyclic operads and operadic modules might be an instance of operadic localization. We also thank M.\ Benini and A.\ Schenkel for many interesting conversations which justified looking at some of the localization examples in this work regarding prefactorization algebras. Finally, we would like to thank B.\ Cnossen for suggestions that improved the exposition, G.\ Heuts for fruitful exchanges, and I.\ Moerdijk for spotting an oversight in an earlier draft of this paper.

K.A.\ was supported by JSPS KAKENHI Grant Number 24KJ1443. V.C.\ was partially supported by the project PID2024-157173NB-I00 funded by MCIN/AEI/ 10.13039/501100011033 and by FEDER, UE. F.P.\ was supported by the Dutch Research Council (NWO), Grant ID DGZBU69810. Part of this work was carried out during a visit of the first two authors to Utrecht University, supported by NWO. Additionally, the authors received funding from the Max Planck Institute for Mathematics in the Sciences.



\section{Relative dendroidal Rezk nerve}
\label{sec:mainthm}

The following definition is the starting point of this paper.
\begin{defn}
	A morphism $\lambda\from\cat O\to\cat O'$ of $\infty$-operads
	exhibits $\cat O'$ as the \emph{localization} of $\cat O$ at a
	subcategory $\cat W\subset U\pr{\cat O}$ if for each $\infty$-operad
	$\cat P$, the map
	\[
	\lambda^*\colon \Alg_{\cat O'}\pr{\cat P}\longrightarrow\Alg_{\cat O}\pr{\cat P}
	\]
	is fully faithful, with essential image given by those algebras that carry
	each map in $\cat W$ to equivalences. In this case, we write $\cat O'=\cat O[\cat W^{-1}]$.
\end{defn} 
The goal of this Section is to provide a model for the operadic localization $\cat O[\cat W^{-1}]$ in terms of \emph{dendroidal spaces/ complete dendroidal Segal spaces} (Theorem \ref{thm:main}). In order to do this, we first recall the formalism of complete dendroidal Segal spaces. The expert reader can jump directly to Section \ref{subsec:mainthm}.

\subsection{Complete dendroidal Segal spaces}

Write $\Omega$ for Moerdijk-Weiss dendroidal category \cite{moerdijk_inner_2009}: Its objects are (finite, rooted non-planar) trees, each of which freely generates a colored discrete operad whose set of objects are the edges of the trees and the generating operations are the vertices. Morphisms in $\Omega$ are maps of operads, which makes the free operad functor $\Omega\hookrightarrow \mathrm{Op}$ fully faithful.

\begin{notation}
	Given a tree $T$, we will not distinguish notationally between $T$ as an object of $\Omega$, as an operad and as a representable dendroidal presheaf. We write $E(T)$ for the set of edges of a tree, i.e., the objects of the operad generated by $T$.
\end{notation}

We write $\d\cS\coloneqq \mathrm{PSh}(\Omega)=\mathrm{Fun}(\Omega^\text{op},\cS)$ for the $\infty$-category of space-valued presheaves on $\Omega$, called \emph{dendroidal spaces}.

\begin{defn}
	A \emph{complete dendroidal Segal space} is a presheaf $X\in \d\cS$ satisfying the Segal and completeness conditions:
	\begin{enumerate}
		\item For an inner edge $e$ in $T$, let $T^e$ and $T_e$ be the upper and lower part of $T$ obtained by cutting $T$ at $e$, so that $T= T^e\cup_{\eta_e} T_e$ in $\Omega$. Then the natural map $$ X_T\longrightarrow X_{T^e}\times_{X_{\eta_e}} X_{T_e}$$ is an equivalence.
		\item The underlying simplicial Segal space  $i^*X\colon \Delta^\text{op}\to \cS$ is complete, where $i^*$ denotes restriction along the canonical inclusion $i\from \Delta\hookrightarrow\Omega$. 
	\end{enumerate}
\end{defn}
It is by now classical that $\infty$-operads admit a model in terms of presheaves on $\Omega$ satisfying the Segal and completeness conditions (see \cite{cisinski_dendroidal_2013}, or \cite{heuts_simplicial_2022}). In fact, the full subcategory of $\d\cS$ spanned by complete dendroidal Segal spaces is equivalent to the essential image of the fully faithful functor $$ \dN \from \Opinfty\longrightarrow \Fun(\Omega^\text{op},\cS) , \quad \cO \longmapsto \Map_{\Opinfty}(\Omega(-),\cO),$$
called the \emph{dendroidal Rezk nerve functor}. In other words, the functor $\dN$ is the restricted Yoneda embedding along the inclusion $\Omega\hookrightarrow\mathrm{Op}\hookrightarrow\Opinfty$. 

The dendroidal nerve is in fact part of an adjunction
\[
	\begin{tikzcd}
		\OO \from \d\cS \ar[r, shift left=1.8]\ar[r, shift right=1.8,"\scalebox{0.8}{$\perp$}", hookleftarrow] & 
		\Opinfty \from \dN,
	\end{tikzcd}
\]
making $\Opinfty$ a reflective localization of dendroidal spaces. One can implement this adjunction by identifying $\Opinfty$ with complete dendroidal Segal spaces. In this case, the left adjoint $\OO$ can be interpreted as sending a dendroidal space to its \emph{complete Segalification}.

\begin{rem}
	A first version of this nerve was already discussed, in a model-categorical fashion, in \cite{cisinski_dendroidal_2013} and \cite[\S 14]{heuts_simplicial_2022}.
\end{rem}

The $\infty$-category $\Opinfty$ admits a closed symmetric monoidal structure. This can be seen via Lurie’s tensor product for $\infty$-operads (\cite[Proposition 2.2.5.7]{lurie_higher_nodate}), or alternatively, by appealing to Hinich and Moerdijk's construction, which lifts the tensor product on dendroidal sets to obtain an equivalent symmetric monoidal structure on complete dendroidal Segal spaces (\cite[\S 5]{hinich_equivalence_2024} and \cite{hinich_addendum_2025}).
We will write $$  \uALG\!\,_{\scalebox{0.6}{\,$(-)$}}(-)\colon \Opinfty^\op\times \Opinfty\longrightarrow \Opinfty$$ for the internal-hom functor.  
Since the inclusion $\Cat_\infty\to \Opinfty$ is symmetric monoidal, the underlying category functor is lax monoidal, and $\Opinfty$ acquires the structure of an $(\infty,2)$-category, with mapping $\infty$-categories given by $$\Alg_{\cO}(\cP)\coloneqq U\uALG\!\,_{\cO}(\cP)$$ for every pair of $\infty$-operads $\cO,\cP$.
Note in particular that there is an equivalence
$$ \Alg_\cO(\cP)^\simeq \simeq \Map_{\Opinfty}(\cO,\cP) .$$

Finally, observe that, in fact, the closed monoidal structure on $\Opinfty$ has an $(\infty,2)$-universal property: For any choice of $\infty$-operads $\cO,\cP,\cQ$, there is a categorical equivalence  $$\Alg_{\cO\otimes \cP}(\cQ)\simeq \Alg_{\cO}(\uALG\!\,_\cP(\cQ)).$$

We will now introduce the $\infty$-category of \emph{relative $\infty$-operads} $\RelOp_\infty$, where $ \infty$-operads are also equipped with a choice of weak equivalences, and see how the above constructions can be extended to the relative context as well.

\subsection{Relative $\infty$-operads}
Consider the  $\infty$-category of relative $\infty$-operads, defined as
$${\RelOp_\infty \coloneqq \Opinfty\times_{\Catinfty} \RelCatinfty}.$$ Explicitly, an object in $\RelOp_\infty$ is a pair $(\cO,\cW)$, where $\cO$ is an $\infty$-operad and $\cW \subseteq U(\cO)$ is a wide subcategory of the underlying  $\infty$-category of $\cO$. The maps in $\cW$ are thought of as weak equivalences.  A morphism $(\cO,\cW)\to(\cP,\cW')$ in $\RelOp_\infty $ is a map of $\infty$-operads $\cO\to \cP$ preserving weak equivalences.

There is a forgetful functor $\mathrm{fgt}\colon \RelOp_\infty\to \Opinfty, (\cO,\cW)\mapsto\cO$, which admits fully faithful left and right adjoints,
\[\begin{tikzcd}
	\Opinfty && {\RelOp_\infty}
	\arrow[""{name=0, anchor=center, inner sep=0}, "{(-)^\flat}"', bend right = 50, from=1-3, to=1-1, hookleftarrow]
	\arrow[""{name=1, anchor=center, inner sep=0}, "{(-)^\sharp}"', bend right =50, from=1-1, to=1-3, hookrightarrow]
	\arrow[""{name=2, anchor=center, inner sep=0}, "{\mathrm{fgt}}"{description}, from=1-3, to=1-1]
	\arrow["\scalebox{0.8}{$\perp$}"{description}, between={0.2}{0.8}, phantom, from=0, to=2]
	\arrow["\scalebox{0.8}{$\perp$}"{description}, between={0.2}{0.8}, phantom, from=2, to=1]
\end{tikzcd}.\]
The functors $(-)^\flat $ and $ (-)^\sharp $ associate to an $\infty$-operad its minimal and maximal marking, respectively. Explicitly, given an $\infty$-operad $\cO$, we have  $$  \cO^\flat=(\cO, U\cO^\simeq), \quad \quad  \cO^\sharp=(\cO, U\cO).$$ 
The adjunction restricts to an analogous one for the functor $\mathrm{fgt}\colon \RelCatinfty\to\Catinfty$.

	The $\infty$-category $\RelOpinfty$ has a symmetric monoidal structure, where the tensor product is ``almost'' the pointwise tensor product. This does not immediately follow from the definition of $\RelOp_\infty$ as a pullback of symmetric monoidal $\infty$-categories, as the underlying category functor $U\colon \Opinfty\to \Catinfty$ is only lax monoidal.\footnote{The fact that the canonical map  $ U(\cO)\times U(\cP)\to U(\cO\otimes \cP)$ is not always an equivalence can be seen by taking $\cO$ to be a stump and $\cP$ a $2$-corolla: The category $U(\cO)\times U(\cP)$ is discrete, but $U(\cO\otimes \cP)$ is not.} Given two relative $\infty$-operads $(\cO,\cW)$, $(\cP,\cW')$, the underlying $\infty$-operad of $(\cO,\cW)\otimes (\cP,\cW')$ is simply $\cO\otimes\cP$, and the marking is given by the subcategory generated by  $\cW\times \cW'$ under the canonical functor $U(\cO)\times U(\cP)\to U(\cO\otimes \cP)$.
	
The way this symmetric monoidal structure can be deduced is by first observing that $\RelOpinfty$ is the underlying category of an $\infty$-operad, as it is a pullback of $\infty$-operads. The symmetric monoidal structure is given by considering the subspaces of multimorphisms $(\cO_i,\cW_i)_{i\in I} \to (\cP,\cW')$ such that the image of the restriction  $\prod_i\cW_i\hookrightarrow \prod_i U\cO_i \to U(\otimes_i \cO_i)\to U\cP$ lands in $\cW'$. 
	

Most importantly, there is an internal-hom object functor, for which we write $$  \RelOp_\infty^{\text{op}}\times \RelOp_\infty\longrightarrow \RelOp_\infty,\quad ((\cO,\cW),(\cP,\cW'))\longmapsto \uALG\!\,_{(\cO,\cW)}(\cP,\cW')^+.$$ 

The relative $\infty$-operad $ \uALG\!\,_{(\cO,\cW)}(\cP,\cW')^+$ is given by the pair
$$ \uALG\!\,_{(\cO,\cW)}(\cP,\cW')^+\coloneqq \pr{ \uALG\!\,_{(\cO,\cW)}(\cP,\cW'), \Alg_{(\cO,\cW)}(\cP,\cW')^{\cW'}}, $$  where:
\begin{itemize}
	\item 
	The underlying $\infty$-operad $\Alg_{(\cO,\cW)}(\cP,\cW')$  is the full suboperad of $\uALG\!\,_\cO(\cP)$ spanned by the algebras $F\colon \cO\to \cP$ restricting to functors $\cW\to\cW'$. 
	\item 	The subcategory of marked edges is defined as the pullback
	\[
	\begin{tikzcd}
		\Alg_{(\cO,\cW)}(\cP,\cW')^{\cW'} \arrow[r]\arrow[d] \ar[rd, phantom, "\lrcorner" description, near start]& \cW^{\prime\times \mathrm{Ob}(U\cO)} \arrow[d] \\
		\Alg_{(\cO,\cW)}(\cP,\cW')  \arrow[r] & U\cP^{\times\mathrm{Ob}(U\cO)}
	\end{tikzcd}.
	\]
Here, $	\Alg_{(\cO,\cW)}(\cP,\cW') $ is the underlying $\infty$-category of $\uALG\!\,_{(\cO,\cW)}(\cP,\cW')$ and the bottom horizontal arrow is obtained as the composite $$ \Alg_{(\cO,\cW)}(\cP,\cW') \hookrightarrow \Alg_\cO(\cP)\to\Alg_{\mathrm{Ob}(U\cO)}(\cP)\simeq U\cP^{\times \mathrm{Ob}(U\cO)},$$  where the last arrow is the restriction along the inclusion $\mathrm{Ob}(U\cO)\to U(\cO)$. 
	In other words, the chosen weak equivalences are those natural transformations ${F\to G}$ which objectwise lie in $\cW'$. 
\end{itemize}

For the purposes of this paper, the two instances relevant to our discussion correspond to the domain with minimal or maximal marking, as we shall see in the next sections.

\subsection{Main theorem}\label{subsec:mainthm}

We now define the \emph{relative dendroidal Rezk nerve functor}. Our formulation generalizes Mazel-Gee's $\infty$-categorical formulation of the relative Rezk's nerve for relative $\infty$-categories: The main result of this section is to generalize  \cite[Theorem 3.8]{mazel-gee_universality_2019} and show that, up to dendroidal complete Segalification, this functor computes localization of $\infty$-operads. 
\begin{defn}
	For a relative $\infty$-operad $ (\cO,\cW)$ and a tree $T\in\Omega$, we write $\Alg_{T}(\cO)^\cW$ for the $\infty$-category obtained as the pullback 
\[
\begin{tikzcd}
	\Alg_{T}(\cO)^\cW \arrow[r]\arrow[d]\ar[rd, phantom, "\lrcorner" description, near start] &\cW^{\times \mathrm{Ob}(T)} \arrow[d] \\
	\Alg_{T}(\cO) \arrow[r] & U(\cO)^{\times \mathrm{Ob}(T)} \ ,
\end{tikzcd}
\] where the bottom horizontal arrow is induced by the inclusion ${\mathrm{Ob}(T)\to T}$. Recall that the objects of the operad generated by the tree $T$ are just the edges of $T$.
\end{defn}

In other words, the objects of $\Alg_{T}(\cO)^\cW$ are maps of $\infty$-operads $T\to \cO$, and an arrow $\alpha\colon F \to G$ in $\Alg_{T}(\cO)^\cW$ is a natural transformation which objectwise lies in $\cW$. We call any such transformation a \emph{natural weak equivalence}.
Observe that $\Alg_{T}(\cO)^\cW$ is the $\infty$-category of weak equivalences of the relative $\infty$-operad $\uALG\!\,_{T^\flat}(\cO,\cW)^+$, that is, $$ \Alg_{T}(\cO)^\cW \coloneqq \Alg_{T^\flat}(\cO,\cW)^\cW . $$

This construction is natural in $T$, and allows us to define a dendroidal object in $\Catinfty$ in the following way.

\begin{defn}\label{def:relnerve}
	Let $\mathrm{pre}\dN $ be the functor defined as follows, $$ \mathrm{pre}\dN \colon \RelOp_\infty\longrightarrow\mathrm{Fun}(\Omega^\op,\Catinfty), \quad  (\cO,\cW)\longmapsto\Alg_{\scalebox{0.6}{\,$(-)$}}(\cO)\!\,^\cW.$$ The \emph{relative dendroidal Rezk nerve} is the functor $$ \dN\colon\RelOp_\infty\longrightarrow \d\cS$$ defined by the composition $$\dN\colon \RelOp_\infty\xlongrightarrow{\;\mathrm{pre}\dN\;} \Fun(\Omega^\text{op},\Catinfty)\xlongrightarrow{\;|-|_\ast\;} \Fun(\Omega^\text{op},\cS),$$ where $|-|\colon \Catinfty \to \cS$ denotes geometric realization.
\end{defn}
Explicitly, for a tree $T\in\Omega$, the relative dendroidal Rezk nerve of $(\cO,\cW)$ is defined by $$ \dN(\cO,\cW)_T = | \Alg_T(\cO)^{\cW}|.$$

Let us record the following basic but essential remark.

\begin{rem}\label{rmk:nervevsrelnerve}
	There is a canonical equivalence $$\dN(\cO^\flat)\simeq \dN\cO.$$ 
	In particular, $\dN(\cO^\flat)$ is a complete dendroidal Segal space.
\end{rem}

For a general relative $\infty$-operad $(\cO,\cW)$, the dendroidal space $\dN(\cO,\cW)$ need not satisfy either the Segal condition or completeness. The same phenomenon occurs for relative $\infty$-categories. Roughly speaking, this happens because pullbacks and geometric realization do not commute in general. We come back to this discussion in Remark \ref{rem:complete} and systematically in Section \ref{sec:criteria}.

We can now state our main result.

\begin{thm}\label{thm:MG}
	For any relative $\infty$-operad $(\cO,\cW)$, applying the relative nerve to the canonical map $\cO^\flat\to(\cO,\cW)$, the induced morphism $$ \cO\simeq \OO(\dN\cO)\longrightarrow \OO(\dN(\cO,\cW))$$  exhibits the target as the operadic localization of $\cO$ at $\cW$.
\end{thm}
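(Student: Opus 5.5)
The plan is to compute maps out of $\OO(\dN(\cO,\cW))$ into an arbitrary $\infty$-operad $\cP$ and to identify them with the maps $\cO\to\cP$ inverting $\cW$. Two reductions come first.

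First, it suffices to work with mapping spaces. Once we know that, for every $\cP$, the map $\Map(\OO\dN(\cO,\cW),\cP)\to\Map(\cO,\cP)$ is an inclusion of components with image the $\cW$-inverting maps, the categorical statement follows. Indeed, replace $\cP$ by $\uALG\!\,_{[1]}(\cP)$ and use the equivalence $\Alg_{\cO\otimes[1]}(\cP)\simeq\Alg_{\cO}(\uALG\!\,_{[1]}(\cP))$. A map $\cO\to\uALG\!\,_{[1]}(\cP)$ inverts $\cW$ precisely when its source and target algebras do, because equivalences in $U\uALG\!\,_{[1]}(\cP)$ are detected objectwise.

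Second, we rewrite the groupoid completion levelwise. For any $\infty$-category $\cat C$ there is an equivalence $|\cat C|\simeq\colim_{[n]\in\Delta^{\op}}\Fun([n],\cat C)^{\simeq}$. Apply this to $\cat C=\Alg_T(\cO)^{\cW}$. Using the $(\infty,2)$-universal property of $\otimes$ and the relative internal hom, we get
\[
\Fun([n],\Alg_T(\cO)^{\cW})^{\simeq}\simeq\Map_{\RelOp_\infty}\big(T^\flat\otimes[n]^\sharp,(\cO,\cW)\big)\simeq\Map_{\Opinfty}(T,\cO_n).
\]
Here $\cO_n$ denotes the full suboperad of $\uALG\!\,_{[n]}(\cO)$ on those $[n]$-diagrams all of whose edges lie in $\cW$, with its unary part cut down to $\cW$-valued transformations. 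Concretely, $\cO_n$ is the underlying operad of $\uALG\!\,_{[n]^\sharp}(\cO,\cW)^+$, with the marking of the target absorbed. One must check that this really defines an $\infty$-operad whose $T$-points are as displayed; this is the operadic analogue of the fact that $\Fun([n],\cat C)^{\cW}$ is a category. These identifications are natural in $T$ and in $[n]$, so we obtain
\[
\dN(\cO,\cW)\simeq\colim_{[n]\in\Delta^{\op}}\dN(\cO_n)
\]
in $\d\cS$. Since $\OO$ is a left adjoint and $\OO\dN\simeq\mathrm{id}$, this gives $\OO\dN(\cO,\cW)\simeq|\cO_\bullet|$, the geometric realization of the simplicial $\infty$-operad $[n]\mapsto\cO_n$. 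Its zeroth term is $\cO_0=\cO$, and the map of the theorem is the inclusion of $0$-simplices. It follows that
\[
\Map(\OO\dN(\cO,\cW),\cP)\simeq\lim_{[n]\in\Delta}\Map(\cO_n,\cP).
\]

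The main obstacle is to show that this totalization is the space of $\cW$-inverting maps $\cO\to\cP$. This is where I would follow the mechanism of Arakawa--Cnossen. We construct a comparison with the corresponding simplicial object for $\cP$ with all equivalences marked: $\cP_n$ is the operad of $[n]$-diagrams of equivalences. This object is constant, since $\cP_n\simeq\cP$ because $[n]\to[0]$ becomes an equivalence after inverting everything. A $\cW$-inverting map $f\colon\cO\to\cP$ induces maps $\cO_n\to\cP_n\simeq\cP$ compatibly in $n$, so it yields a point of the limit. Conversely, a compatible family $(g_n)$ restricts to $g_0\colon\cO\to\cP$. The two face maps $\cO_1\rightrightarrows\cO$ together with $g_1$ provide a natural equivalence between $g_0d_0$ and $g_0d_1$. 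Evaluated at a unary operation $w\in\cW$, viewed as an object of $\cO_1$, this shows that $g_0(w)$ is an equivalence.

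To show that these constructions are mutually inverse, I would first try the following. Prove that each degeneracy $\cO\to\cO_n$ becomes an equivalence after applying $\Map(-,\cP)$ restricted to $\cW$-inverting components. For this, use that $\cO_n$ deformation-retracts onto $\cO$ through a $\cW$-marked natural transformation, namely the contraction of $[n]$ to its last vertex. This is an operadic version of the statement that $\Fun([n],\cat C)^{\cW}\to\cat C$ is a $\cW$-localization. Then the limit is computed by the cosimplicial object being constant on these components, and the only remaining condition is the one imposed at level $1$, which is exactly $\cW$-invertibility.
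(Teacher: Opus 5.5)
Your strategy is the paper's. The identification $\OO\dN(\cO,\cW)\simeq\colim_{\Delta^{\op}}\cO_\bullet$ is Lemma \ref{lem:simplres}, with $\cO_n=L_n(\cO,\cW)$. The retraction of $\cO_n$ onto constant diagrams is the paper's step (i). The level-$1$ argument is its step (ii). Your preliminary reduction to mapping spaces via $\uALG\!\,_{[1]}(\cP)$ is valid; the paper avoids it by identifying $\colim L_\bullet(\cO,\cW)$ with $\cO[\cW^{-1}]$ under $\cO$ directly.

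Drop the clause ``with its unary part cut down to $\cW$-valued transformations.'' The correct $\cO_n$ is the full suboperad of $\uALG\!\,_{[n]}(\cO)$ on $\cW$-valued $[n]$-diagrams, with arbitrary natural transformations as unary operations. The $\cW$-condition on transformations in $\Alg_T(\cO)^{\cW}$ lives in the $T$-direction. After swapping factors in $T^\flat\otimes[n]^\sharp$, it only says that each $[n]$-diagram lies in $\cW$; operations of $\cO_n$ are unconstrained because $T^\flat$ has the minimal marking. With the cut-down version, $\cO_0$ would have unary part $\cW$ rather than $U\cO$. The $T$-points would also be wrong for trees with unary vertices.

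\textbf{The genuine gap} is in proving that $g_0(w)$ is invertible. Evaluating the equivalence $g_0d_1\simeq g_1\simeq g_0d_0$ at the \emph{object} $w\colon x\to y$ of $\cO_1$ only gives some equivalence $g_0(x)\simeq g_0(y)$, with no a priori relation to $g_0(w)$. You must evaluate on a \emph{morphism} of $\cO_1$ whose two faces are $w$ and an identity. Take the square $\alpha\colon w\to\id_y$ whose vertical components are $w$ and $\id_y$. It lies in $\cO_1$, and $d_1\alpha=w$, $d_0\alpha=\id_y$. Hence
\[
g_0(w)\simeq g_1(\alpha)\simeq g_0(\id_y)
\]
in $\Fun([1],U\cP)$, so $g_0(w)$ is equivalent to an identity. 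Equivalently, naturality of your transformation $\theta$ at $\alpha$ gives $\theta_w\simeq\theta_{\id_y}\circ g_0(w)$, and both $\theta$'s are invertible. This is exactly the paper's argument.

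With this fix, your totalization argument goes through. Each $g_n\simeq g_0\circ\ev_i$ then inverts natural weak equivalences, so the compatible family lies in the sub-cosimplicial space where all maps are equivalences.
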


The strategy of the proof relies on the one used in \cite{arakawa_short_2026} to prove the categorical version. In particular, a key step consists of constructing a suitable simplicial resolution for the $\infty$-operad $\OO(\dN(\cO,\cW))$. Let us deal with this now.


\begin{lem}\label{lem:simplres}
	Consider the functor $$ L_\bullet\colon  \RelOp_\infty\longrightarrow \Fun(\Delta^{\op},\Opinfty), \quad (\cO,\cW)\longmapsto L_\bullet(\cO,\cW)\coloneqq \uALG\!\,_{[\bullet]^\sharp}(\cO,\cW). $$  
	There is a natural equivalence of $\infty$-operads $$ \mathrm{colim}_{\Delta^{\op}}L_\bullet(\cO,\cW){\;\simeq\;} \OO(\dN(\cO,\cW)).$$
\end{lem}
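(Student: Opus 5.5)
We reduce to a levelwise statement in $\d\cS$ and then use that $\OO$ is a left adjoint. Since $\OO\colon \d\cS\to\Opinfty$ preserves colimits and $\OO\circ \dN\simeq \mathrm{id}$, we have
\[
\mathrm{colim}_{\Delta^{\op}}L_\bullet(\cO,\cW)\simeq \mathrm{colim}_{\Delta^{\op}}\OO\bigl(\dN L_\bullet(\cO,\cW)\bigr)\simeq \OO\Bigl(\mathrm{colim}_{\Delta^{\op}}\dN L_\bullet(\cO,\cW)\Bigr),
\]
where the colimit on the right is computed in $\d\cS$, hence pointwise in $T\in\Omega$. It therefore suffices to produce a natural equivalence of dendroidal spaces
\[
\mathrm{colim}_{\Delta^{\op}}\dN L_\bullet(\cO,\cW)\simeq \dN(\cO,\cW).
\]

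For this we evaluate at a tree $T$. By the $(\infty,2)$-universal property of the tensor product on $\RelOpinfty$ and the description of the internal hom,
\[
\dN\bigl(L_n(\cO,\cW)\bigr)_T=\Map_{\Opinfty}\bigl(T,\uALG_{[n]^\sharp}(\cO,\cW)\bigr)\simeq \Map_{\RelOpinfty}\bigl(T^\flat\otimes[n]^\sharp,(\cO,\cW)\bigr).
\]
Here the marking of $T^\flat\otimes[n]^\sharp$ is generated by $\{\mathrm{id}\}\times [n]$. We now use the symmetry of the tensor product. Maps $T\otimes[n]\to\cO$ correspond to functors $[n]\to\Alg_T(\cO)$, and the marking condition says exactly that every arrow of $[n]$ is sent to a natural transformation that is objectwise in $\cW$. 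This gives a natural equivalence
\[
\dN\bigl(L_n(\cO,\cW)\bigr)_T\simeq \Map_{\Catinfty}\bigl([n],\Alg_T(\cO)^{\cW}\bigr)=\bigl(\Alg_T(\cO)^{\cW}\bigr)_n^{\simeq},
\]
that is, the $n$-th space of the Rezk nerve of $\Alg_T(\cO)^{\cW}$. This is natural in both $[n]$ and $T$.

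The conclusion then follows from the classical fact that for any $\infty$-category $\cat C$, the geometric realization of its Rezk nerve satisfies $\mathrm{colim}_{\Delta^{\op}}\Map([\bullet],\cat C)\simeq|\cat C|$. Indeed, both sides are colimit-preserving in the complete Segal space model, and $|\cat C|$ is the colimit of $\Map([n],\cat C)$ over the category of simplices. Applying this to $\cat C=\Alg_T(\cO)^{\cW}$ gives $\dN(\cO,\cW)_T$, naturally in $T$.

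I expect the main obstacle to be the middle step, namely making the identification of $\Map_{\Opinfty}(T,\uALG_{[n]^\sharp}(\cO,\cW))$ with $\Map([n],\Alg_T(\cO)^{\cW})$ fully coherent and natural in both variables. One has to check that the full suboperad $\Alg_{([n],[n])}(\cO,\cW)\subseteq\uALG_{[n]}(\cO)$ interacts correctly with the adjunction $\Alg_{T\otimes[n]}(\cO)\simeq\Alg_T(\uALG_{[n]}(\cO))\simeq\Fun([n],\Alg_T(\cO))$. Concretely, a map $T\to\uALG_{[n]}(\cO)$ lands in the full suboperad if and only if each edge of $T$ is sent to a functor $[n]\to\cW$. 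For this I would use that full suboperads are detected on colors, together with the fact that $\mathrm{Ob}(T)=E(T)$ and that components of the transformation at each edge are exactly those functors.
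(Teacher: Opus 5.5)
Your proposal is correct and is essentially the paper's argument. You pull $\OO$ out of the colimit using $\OO\circ\dN\simeq\mathrm{id}$. You then identify $\dN(L_n(\cO,\cW))_T$ with $\Map_{\Catinfty}([n],\Alg_T(\cO)^{\cW})$ via the $(-)^\flat\dashv\mathrm{fgt}$ adjunction and the tensor--hom adjunction in $\RelOpinfty$; the paper reads the $\cW$-condition off the marking of $\uALG_{T^\flat}(\cO,\cW)^+$ rather than off $T^\flat\otimes[n]^\sharp$, but this is the same content. You finish with $\colim_{\Delta^{\op}}\N\cat C\simeq|\cat C|$, which the paper also cites as standard.

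Your justification of that last fact is garbled. The clean argument is this: $\colim_{\Delta^{\op}}$ is left adjoint to the constant functor, and constant simplicial spaces are complete Segal spaces. Hence $\colim_{\Delta^{\op}}X\simeq|LX|$ for the complete Segalification $L$, and $L\N\cat C\simeq\cat C$.
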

\begin{proof}
	
	Consider the following diagram
	\[
	\begin{tikzcd}[column sep=small]
		\RelOp_\infty \arrow[rr, "L_\bullet"] \arrow[ddd, bend right =65, "\N_\d"'] \arrow[d, "\mathrm{pre}\dN"']        &                          & {\Fun(\Delta^\text{op},\Opinfty)} \arrow[ddd, "(\dN)_\ast"] \arrow[dddd,bend left=70,"\mathrm{id}"]                          \\
		{\Fun(\Omega^\text{op},\Catinfty)} \arrow[rd, "\N_\ast"] \arrow[dd, "|-|_\ast"'] & (\ast)                                                                                             &                     \\						    & {\Fun(\Omega^{\text{op}}, \Fun(\Delta^{\text{op}},\cS))} \arrow[ld, "(\colim_{\Delta^{\op}})_\ast"'] \arrow[rd, "\simeq" sloped] &                                                                                                                              \\
		{\Fun(\Omega^\text{op},\cS)} \arrow[d, "\OO"']                             &                      & {\Fun(\Delta^\text{op},\Fun(\Omega^\text{op},\cS))} \arrow[d, "\OO_\ast"] \arrow[ll, "\mathrm{colim}_{\Delta^{\text{op}}}"] \\
		\Opinfty              &                                                                                                    & {\Fun(\Delta^\text{op},\Opinfty).} \arrow[ll, "\mathrm{colim}_{\Delta^{\text{op}}}"]        \end{tikzcd}
 	\]
We will show that this diagram commutes. That all the subdiagrams different from $(\ast)$ commute follows by direct inspection, by recalling that for a category $\cC$, there is an equivalence $|\cC|\simeq \mathrm{colim}_{\Delta^{\text{op}}}\N \cC$ (see \cite{ramzi_mo_2021} for a detailed explanation of this standard result). For the commutativity of the diagram $(\ast)$, we proceed as follows. Given a relative $\infty$-operad $(\cO,\cW)$, evaluating the simplicial dendroidal object $\dN L_\bullet(\cO,\cW)$ at $([n],T)\in \Delta\times\Omega$ gives the equivalence
	$$\dN(L_n(\cO,\cW))_T = \Map_{\Opinfty} (T, \uALG\!\,_{[n]^\sharp}(\cO,\cW))\simeq \Map_{\RelOp_\infty}(T^\flat, \uALG\!\,_{[n]^\sharp}(\cO,\cW)^+),$$ where the last equality exploits the adjunction $(-)^\flat\dashv \mathrm{fgt}$. By using the inner hom-tensor adjunction, we can rewrite the last term as $$ \dN(L_n(\cO,\cW))_T \simeq \Map_{\RelOp_\infty}([n]^\sharp, \uALG\!\,_{T^\flat}(\cO,\cW)^+)\simeq \Map_{\Catinfty}([n],\Alg_{T^\flat}(\cO,\cW)^\cW).$$ As the last term is by definition $ \N(\mathrm{pre}\dN(\cO,\cW)_T)_n$, this concludes the proof.
\end{proof}

We are now able to prove our main result, namely Theorem \ref{thm:MG}.

\begin{proof}[{Proof of Theorem \ref{thm:MG}}]
	By Lemma \ref{lem:simplres},	the thesis reduces to showing that the induced map $$\colim_{\Delta^\text{op}} L_\bullet (\cO,\cW) \longrightarrow \mathrm{colim}_{\Delta^\text{op}} L_\bullet(\cO[\cW^{-1}]^\flat)$$ is an equivalence of $\infty$-operads.
Since the colimit functor is left adjoint to the constant diagram functor, the claim reduces to showing that, for every $\infty$-operad $\cP$, the map of spaces $$ \Theta\colon \Map_{\Fun(\Delta^\text{op}, \Opinfty)}(L_\bullet\cO[\cW^{-1}]^\flat,\cP)\longrightarrow \Map_{\Fun(\Delta^\text{op}, \Opinfty)}(L_\bullet(\cO,\cW),\cP) $$ is an equivalence, where by abuse of notation we write $\cP$ for the constant simplicial diagram at $\cP$. 
	Let us prove this by showing that $(i)$ it is $(-1)$-truncated, and $(ii)$ it is essentially surjective.
	
	For $(i)$, since operadic localization is an epimorphism, by naturality in $[n]$ we just need to show that for every $n\geq 0$ the map $$ L_n(\lambda)\colon L_n(\cO,\cW)\longrightarrow L_n(\cO[\cW^{-1}]^\flat) $$ exhibits the target as the localization of the source at natural weak equivalences.
	
	To this end, we consider the following commutative diagram induced by the map $[n]\to [0]$: 
\[
\begin{tikzcd}
	\cO\simeq &[-11mm] L_0(\cO,\cW) \arrow[r]\arrow[d] & L_0(\cO[\cW^{-1}]^\flat) \arrow[d, "\simeq" sloped] &[-11mm] \simeq \cO[\cW^{-1}] \\
	 & L_n(\cO,\cW) \arrow[r] & L_n(\cO[\cW^{-1}]^\flat)& \simeq \cO[\cW^{-1}].
\end{tikzcd}
\]

As the right vertical arrow is an equivalence and the top horizontal one is a localization, we only need to show that the left vertical map $\cO\to L_n(\cO,\cW)$, where $ L_n(\cO,\cW) = \uALG\!\,_{[n]^\sharp}(\cO,\cW)$, is an equivalence up to localization of the target at natural weak equivalences. But this is true, since evaluation at $[0]$ is a right adjoint and both the unit and the counit are natural weak equivalences.

Observe that we also obtained a characterization of the essential image of $\Theta$: It is given by those $\phi\colon L_\bullet(\cO,\cW)\to \cP$ such that $\phi_0\colon L_0(\cO,\cW)=\cO\to \cP$ (equivalently, $\phi_n$ for every $n$) induces a map of relative $\infty$-operads $L_0(\cO,\cW)^\sharp\simeq (\cO,\cW)\to \cP^\flat$. (Observe the condition is equivalent to asking that $\phi_0$ induces a map  $\cO[\cW^{-1}]\to \cP$.)

In particular, to prove point $(ii)$, we just need to show that this condition is satisfied by any morphism of simplicial diagrams $\phi\colon L_\bullet(\cO,\cW)\to \cP$. By naturality in $[n]$, it is enough to show the condition holds for $n=0$. Consider an arrow $w\colon x \to y$ in $\cW$ and let us prove that $\phi_0(w)$ is an equivalence. In $L_1(\cO,\cW)$ there is a morphism $\alpha\colon w \to \id_y$ of the form \[
\begin{tikzcd}
	x \arrow[r, "w"] \arrow[d, "w"'] & y \arrow[d,"\id_y"] \\
	y \arrow[r, "\id_y"'] & y 
\end{tikzcd}
\] so in particular $\phi_0(w)\simeq\phi_0(d_1(\alpha))\simeq \phi_1(\alpha)\simeq\phi_0(d_0(\alpha))\simeq\phi_0(\id_y).$ Since the latter is clearly an equivalence, this concludes the argument.	
\end{proof}


\begin{rem}The relative nerve construction of Definition \ref{def:relnerve} could be more generally performed for reflective localizations of more general diagram categories. A rich source of meaningful examples is given by appealing to Barwick's (perfect) operator categories \cite{barwick_operator_2018} and more generally to the subcategory of complete Segal objects over algebraic patterns (in the sense of \cite{chu_homotopy-coherent_2021}) receiving a fully faithful functor of algebraic patterns $\Delta^\op\to\cP^\op$. However, the crucial property we have used in the proof is the closed symmetric monoidal structure on $\Opinfty$, and such a structure may not be available in more general cases.

%
\end{rem}
We conclude with a few useful observations about when it is actually necessary to apply the Segal completion functor $\OO$ to the relative nerve, and when this step can be safely omitted.

 Given a relative $\infty$-operad $(\cO,\cW)$, we say that $ (\cO,\cW)$ is \emph{saturated} if the preimage of equivalences under $U\lambda\from U(\cO)\to U(\cO[\cW^{-1}])$ is precisely the subcategory $\cW$. Saturation handles completeness of the relative nerve, whenever this is known to be Segal: More precisely, we can give the following characterization.

\begin{rem}\label{rem:complete}
	Suppose that $\cW$ enjoys the $2$-out-of-$3$ property and that $\dN(\cO,\cW)$ satisfies the Segal condition. Then we claim that $\dN(\cO,\cW)$ is complete if and only if $\pr{\cat O,\cat W}$ is saturated.

Let us now prove our claim. Assume that $\dN(\cO,\cW)$ is complete Segal, and let $f\in \dN(\cO,\cW)_{C_1}$ be a unary operation such that its image in $\dN(\cO[\cW^{-1}])_{C_1}$ is a \emph{homotopy equivalence} (see \cite[\textsection 6]{rezk_model_2001}). Then, by Theorem \ref{thm:MG}, we have an identification of dendroidal spaces $\dN(\cO,\cW)\simeq \dN(\cO[\cW^{-1}])$.
Therefore, $f$ is already a homotopy equivalence in $\dN(\cO,\cW)_{C_1}$. By completeness, $f$ lies in the essential image of the degeneracy map 
$$
s_0\colon \dN(\cO,\cW)_\eta=|\cW|\longrightarrow{\dN(\cO,\cW)}_{C_1}=|\Alg_{C_1}(\cO)^\cW|,
$$
and so $f$ belongs indeed to $\cW$ by $2$-out-of-$3$, as wanted. 
For the converse, suppose that $(\cO,\cW)$ is saturated. Since its relative nerve is Segal by assumption, Theorem \ref{thm:MG} yields a fully faithful and essential surjective map of dendroidal Segal spaces (in the sense of \cite[Definitions 5.9 and 8.9]{cisinski_dendroidal_2013}) $ \dN(\cO,\cW)\to \dN(\cO[\cW^{-1}])$. 
In particular, the subspace $\dN(\cO,\cW)_{\text{hoeq}}\subset \dN(\cO,\cW)_{C_1}$ of homotopy equivalences is the
preimage of $\dN(\cO[\cW^{-1}])_{\text{hoeq}}=\Map([1], U(\cO[\cW^{-1}])^\simeq )$. Since $(\cO,\cW)$ is saturated, this preimage is exactly $|\Fun([1],\cW)|$. Consequently, the map $\dN\pr{\cat O,\cat W}_{\eta}\to\dN\pr{\cat O,\cat W}_{\hoeq}$
	can be identified with the map 
	$
\abs{\cat W}\longrightarrow\abs{\Fun\pr{[1],\cat W}}.
$
	This is evidently a homotopy equivalence, so $\dN\pr{\cat O,\cat W}$
	is complete, as desired. 
\end{rem}

\section{Criteria for localizations and local equivalences} 
\label{sec:criteria}

Our goal in this section is to develop widely applicable criteria to detect operadic localizations based on Theorem \ref{thm:MG}. These criteria are formulated in Propositions \ref{prop:corolla-wise_criterion I} and  \ref{prop:corolla-wise_criterion II}. Moreover, we observe that these statements admit dual versions, e.g., see Proposition \ref{prop:dual corolla-wise_criterion II}. For a discussion comparing our result with Lurie's and Harpaz's theory of weak approximations, see Remark \ref{rem:Weak Approx}.

We start with a direct consequence of Theorem \ref{thm:MG}. Recall that a \emph{local equivalence} between relative $\infty$-operads is a map in $\RelOpinfty$ inducing an equivalence between localizations. In other words, the subcategory of local equivalences in $\RelOpinfty $ is the preimage of $\Op_{\infty}^{\simeq}$ along the operadic localization functor $L\from \RelOpinfty\to \Opinfty$ (see \cite[\textsection 2]{arakawa_relative_2025}).

\begin{cor}\label{coro: Segal-equivalence implies local-equivalence} Let $f\colon (\cO,\cW_{\cO})\to (\cP,\cW_{\cP})$ be a map of relative $\infty$-operads. Then, $f$ is a local equivalence if one of the following conditions is satisfied:
	\begin{itemize}
		\item the functor $f_*\colon \Alg_{T}(\cO)^{\cW_{\cO}}\to \Alg_{T}(\cP)^{\cW_{\cP}}$ is a weak homotopy equivalence for any tree $T\in \Omega$, or
		\item the functor $f_*\colon \Alg_{T}(\cO)^{\cW_{\cO}}\to \Alg_{T}(\cP)^{\cW_{\cP}}$ is a weak homotopy equivalence for any $T\in \{\eta,C_n\}_{n\geq 0}$, and both $\dN(\cO,\cW_{\cO})$ and $\dN(\cP,\cW_{\cP})$ are dendroidal Segal spaces.
	\end{itemize} 
\end{cor}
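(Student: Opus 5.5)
The plan is to reduce both conditions to Theorem \ref{thm:MG}, which gives natural equivalences $L(\cO,\cW_\cO)\simeq \OO(\dN(\cO,\cW_\cO))$ and $L(\cP,\cW_\cP)\simeq \OO(\dN(\cP,\cW_\cP))$. These are compatible with $f$, because the theorem's identification is induced by the natural map $\cO^\flat\to(\cO,\cW_\cO)$ and both $\dN$ and $\OO$ are functorial. Hence $L(f)$ is identified with $\OO(\dN(f))$. It therefore suffices to show that $\dN(f)\colon \dN(\cO,\cW_\cO)\to\dN(\cP,\cW_\cP)$ becomes an equivalence after applying $\OO$.

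\emph{First condition.} By definition $\dN(\cO,\cW_\cO)_T=\abs{\Alg_T(\cO)^{\cW_\cO}}$, and $\dN(f)_T$ is the geometric realization of $f_*$. If $f_*$ is a weak homotopy equivalence for every $T\in\Omega$, then $\dN(f)$ is a levelwise equivalence of presheaves. It is therefore an equivalence in $\d\cS$, and so $\OO(\dN(f))$ is an equivalence.

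\emph{Second condition.} Here $\dN(f)$ is a map of dendroidal Segal spaces that is an equivalence on $\eta$ and on all corollas $C_n$. The Segal condition expresses $X_T$ for an arbitrary tree $T$ as an iterated fibre product of the spaces $X_{C_{n_v}}$ over copies of $X_\eta$, one corolla for each vertex $v$ and one copy of $X_\eta$ for each inner edge. We induct on the number of vertices of $T$ using the decomposition $T=T^e\cup_{\eta_e}T_e$:
\begin{itemize}
\item Trees with no vertices are $\eta$, and trees with one vertex are corollas, so the base cases hold by hypothesis.
\item For the inductive step, the Segal squares for $X=\dN(\cO,\cW_\cO)$ and $Y=\dN(\cP,\cW_\cP)$ are pullbacks, and $\dN(f)$ induces a map between them that is an equivalence on the three outer corners by induction. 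So it is an equivalence on $X_T\to Y_T$.
\end{itemize}
Thus $\dN(f)$ is again a levelwise equivalence, and we conclude as in the first case. Note that completeness is not needed, since $\OO$ inverts levelwise equivalences trivially.

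The only step needing care is the naturality claim in the first paragraph, namely that Theorem \ref{thm:MG} identifies the localization functor with $\OO\circ\dN$ naturally in $(\cO,\cW)$. This follows from the naturality of the map $\dN(\cO^\flat)\to\dN(\cO,\cW)$ in the relative operad. Combined with the fact that localization maps are epimorphisms, the comparison map $L(\cO,\cW)\to\OO\dN(\cO,\cW)$ is uniquely determined, and is therefore natural.
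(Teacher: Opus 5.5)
Your proposal is correct and follows the route the paper intends. The paper states this corollary as a direct consequence of Theorem~\ref{thm:MG} without a written proof: the first condition makes $\dN(f)$ a levelwise equivalence of dendroidal spaces, and the second reduces to the first via the Segal condition, which is exactly your induction on the number of vertices. Your justification that the identification of $L$ with $\OO\circ\dN$ is natural in $(\cO,\cW)$, using that localizations are epimorphisms, supplies the one detail the paper leaves implicit.
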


In light of the previous corollary, we should look for sufficient conditions on the relative $\infty$-operad $(\cO,\cW)\in \RelOp_{\infty}$ ensuring that $\dN(\cO,\cW)$ is a dendroidal Segal space. We provide one instance of this phenomenon in Lemma \ref{lem:dendroidal Rezk nerve is Segal}, by exploiting the cocartesian fibration 
$
\ev_{\mathrm{root}}\from\Alg_{C_{n}}\pr{\cat O}\!\,^{\cat{W}}\to \Alg_{\eta}\pr{\cat O}\!\,^{\cat{W}}\simeq\cat{W}
$
induced by the inclusion of the root $\eta\to C_n$ (see Proposition \ref{prop:root_cc} and Remark \ref{rem:restr_at_root}). The essential ingredient is the following result, commonly known as \emph{Quillen's Theorem B for cocartesian fibrations}:

\begin{lem}[{\cite[Lemma A.1.1]{karlsson_assembly_2026}}, {\cite[Proposition A.3]{arakawa_context_2026}}]\label{lem:Quillen B}
	Let $\cat D'\to \cat D$ be a functor and $p\colon \cat C\to \cat D$ be a cocartesian fibration between $\infty$-categories. If, for each arrow $d\to d'$ in $\cat D$, the induced map between (strict) fibers $\cat C_d \to \cat C_{d'}$ of $p$ is a weak homotopy equivalence,\footnote{This condition is usually referred to as saying that $p$ is a locally constant cocartesian fibration.} the square
	\[\begin{tikzcd}
		{\vert\cat{D}'\times_{\cat{D}}\cat{C}\vert} & {\vert\cat{C}\vert} \\
		{\vert\cat{D}'\vert} & {\vert\cat{D}\vert}
		\arrow[from=1-1, to=1-2]
		\arrow[from=1-1, to=2-1]
		\arrow[from=1-2, to=2-2]
		\arrow[from=2-1, to=2-2]
	\end{tikzcd}\]
	is a pullback in $\cat{S}$.
\end{lem}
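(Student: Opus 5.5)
The plan is to reduce to the case where $\cat D'$ is a point and then glue along colimits. The functor $|-|\colon \Catinfty\to\cS$ is the left adjoint to the inclusion, so it preserves colimits. Any $\infty$-category is the colimit in $\Catinfty$ of its simplices: $\cat D'\simeq\colim_{([n],\sigma)\in\Delta_{/\cat D'}}[n]$. Moreover, pullback of cocartesian fibrations along $\cat D'\to\cat D$ commutes with these colimits, since cocartesian fibrations over $\cat D$ correspond by straightening to functors $\cat D\to\Catinfty$, and pullback corresponds to precomposition. Concretely, $\cat D'\times_{\cat D}\cat C$ is the lax colimit (Grothendieck construction) of the composite $\cat D'\to\cat D\to\Catinfty$.

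The cleanest route, which I would follow, uses the standard formula: for a cocartesian fibration classified by $F\colon\cat D\to\Catinfty$, one has
\[
|\cat C|\simeq\colim_{\cat D}|F(-)|
\]
in $\cS$ (Thomason's theorem for $\infty$-categories; the realization of the Grothendieck construction is the colimit of realizations). The same formula gives $|\cat D'\times_{\cat D}\cat C|\simeq\colim_{\cat D'}|F(-)|$. By hypothesis, $|F|\colon\cat D\to\cS$ sends every morphism to an equivalence, so it factors through $|\cat D|$, giving a functor $G\colon|\cat D|\to\cS$. By straightening for spaces, $G$ corresponds to a left fibration $E\to|\cat D|$ with fibres $|F(d)|$.

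The key step is then to use that colimits in $\cS$ of functors inverting all maps are computed as total spaces. For $\cat D''\in\{\cat D,\cat D'\}$ with its map $\cat D''\to|\cat D|$, we have
\[
\colim_{\cat D''}G|_{\cat D''}\simeq|\cat D''\times_{|\cat D|}E|\simeq|\cat D''|\times_{|\cat D|}E.
\]
The last equivalence holds because pullback along the left fibration (equivalently Kan fibration) $E\to|\cat D|$ preserves weak homotopy equivalences; equivalently, colimits in $\cS$ are universal, so $\cS_{/|\cat D|}\simeq\Fun(|\cat D|,\cS)$ commutes with pulling back. Applying this with $\cat D''=\cat D$ gives $|\cat C|\simeq E$, since $|\cat D|\times_{|\cat D|}E=E$. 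Applying it with $\cat D''=\cat D'$ gives $|\cat D'\times_{\cat D}\cat C|\simeq|\cat D'|\times_{|\cat D|}E$. These identifications are natural in $\cat D''$, so the square in question is identified with the evident pullback square, which proves the claim.

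I expect the main obstacle to be justifying the middle identification, $\colim_{\cat D''}(G\circ\pi)\simeq|\cat D''|\times_{|\cat D|}E$ for $\pi\colon\cat D''\to|\cat D|$. To prove it, I would first compute via left Kan extension along $\pi$ followed by $\colim_{|\cat D|}$. Then I would invoke descent in $\cS$ (universality of colimits): $\colim_{\cat D''}\pi^*G\simeq\colim_{\cat D''}\pi^*(E)$ as a pullback of the constant diagram. Finally, I would check naturality of the Thomason equivalence in the base, which can be cited from the literature on straightening.
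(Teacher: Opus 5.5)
The paper gives no proof of this lemma; it only cites the two references. Your argument is a correct, self-contained proof along standard lines. It combines three ingredients:
\begin{enumerate}
\item Thomason's formula $|\cat C|\simeq\colim_{\cat D}|F|$. This follows from $\colim F\simeq\cat C$ localized at cocartesian edges, together with the facts that $|-|$ preserves colimits and does not see localizations.
\item The factorization $|F|\simeq G\circ\pi$ through $|\cat D|$, which is forced by local constancy.
\item Universality of colimits in $\cS$, which gives $\colim_{\cat D''}G\circ\pi''\simeq|\cat D''|\times_{|\cat D|}E$.
\end{enumerate}
The simplex-decomposition plan in your first paragraph is never used and can be dropped.

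The one point that needs more than a citation is the one you flag at the end. The conclusion needs an equivalence of \emph{squares}, not just of their corners. That requires the Thomason identification to be natural along the base change $\cat D'\to\cat D$.

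You can make this naturality automatic instead of citing it:
\begin{enumerate}
\item The transformation $F\to|F|\simeq G\circ\pi$ unstraightens to a functor $c\colon\cat C\to\cat D\times_{|\cat D|}E$ over $\cat D$. On the fiber over $d$ it is $\cat C_d\to|\cat C_d|$.
\item Its base change $c'\colon\cat D'\times_{\cat D}\cat C\to\cat D'\times_{|\cat D|}E$ is again a fiberwise weak homotopy equivalence of cocartesian fibrations, now over $\cat D'$.
\item Thomason over a \emph{fixed} base shows that $|c|$ and $|c'|$ are equivalences. Naturality in $F$ over a fixed base is formal: for a space $Y$, both $\Map(|\mathrm{Un}(F)|,Y)$ and $\Map(\colim|F|,Y)$ identify with the space of natural transformations from $F$ to the constant functor at $Y$.
\item The functors $c$, $c'$ and the projections form an honest commutative cube of $\infty$-categories. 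Compose with the natural comparison $|\cat D''\times_{|\cat D|}E|\simeq|\cat D''|\times_{|\cat D|}E$.
\end{enumerate}
This identifies the square in the statement with the pullback of $E\to|\cat D|$ along $|\cat D'|\to|\cat D|$, with no further coherence to check.
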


\begin{lem}\label{lem:dendroidal Rezk nerve is Segal}
	Let $(\cO,\cW)\in \RelOp_{\infty}$ be a relative $\infty$-operad satisfying the following condition:
	any map $\alpha\colon x\to y$ in $\cW$ induces a weak homotopy equivalence 
	$$
	\alpha_!\colon \Alg_{C_n}(\cO)^{\cW}_{x}\longrightarrow \Alg_{C_n}(\cO)^{\cW}_{y} 
	$$
	between the fibers of the cocartesian fibration $\ev_{\mathrm{root}}\colon \Alg_{C_n}(\cO)^{\cW}\to \cW$.
	Then, $\dN(\cO,\cW)$ is a dendroidal Segal space.
\end{lem}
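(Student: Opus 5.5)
We need the Segal condition: for each tree $T$ with inner edge $e$, the map $|\Alg_T(\cO)^\cW| \to |\Alg_{T^e}(\cO)^\cW|\times_{|\cW|}|\Alg_{T_e}(\cO)^\cW|$ is an equivalence. The plan is to reduce, by induction on the number of vertices of $T$, to grafting a single corolla onto a leaf, and then to apply Lemma \ref{lem:Quillen B} to the root-evaluation cocartesian fibration.

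\textbf{The pullback of $\infty$-categories.} Since $\cO$ is an $\infty$-operad, $\Alg_{(-)}(\cO)$ sends the grafting $T=T^e\cup_{\eta_e}T_e$ to a pullback of $\infty$-categories
\[
\Alg_T(\cO)\simeq \Alg_{T^e}(\cO)\times_{U(\cO)}\Alg_{T_e}(\cO),
\]
the Segal property of $\dN\cO$ in its categorical enhancement. The marking condition is imposed objectwise on edges, and the edge sets satisfy $E(T)=E(T^e)\cup_{\{e\}}E(T_e)$. Hence this restricts to a pullback
\[
\Alg_T(\cO)^\cW\simeq \Alg_{T^e}(\cO)^\cW\times_{\cW}\Alg_{T_e}(\cO)^\cW.
\]
It remains to show that realization commutes with this pullback.

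\textbf{Applying Lemma \ref{lem:Quillen B}.} Write $T^e$ for the part above $e$, whose root is $e$. I apply the lemma with $p=\ev_{\mathrm{root}}\colon\Alg_{T^e}(\cO)^\cW\to\cW$ and $\cD'=\Alg_{T_e}(\cO)^\cW\to\cW$ given by evaluation at the leaf $e$. This requires two facts for every tree $S$, used with $S=T^e$:
\begin{itemize}
\item $\ev_{\mathrm{root}}\colon\Alg_S(\cO)^\cW\to\cW$ is a cocartesian fibration;
\item it is locally constant.
\end{itemize}
For a corolla both are given: cocartesianness by Proposition \ref{prop:root_cc} and Remark \ref{rem:restr_at_root}, local constancy by hypothesis. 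For $S=\eta$ the map is the identity of $\cW$. For general $S$, decompose $S$ as its root corolla $C_n$ with the subtrees $S_1,\dots,S_n$ grafted onto the leaves. By the pullback above,
\[
\Alg_S(\cO)^\cW\simeq \Alg_{C_n}(\cO)^\cW\times_{\cW^{n}}\prod_i\Alg_{S_i}(\cO)^\cW.
\]
Over a root object $y$, the fiber is the fiber of $\Alg_{C_n}(\cO)^\cW_y\to\cW^n$ pulled back along $\prod_i\Alg_{S_i}(\cO)^\cW$. Cocartesian lifts over the root are built by taking the cocartesian lift in the corolla and leaving the leaves fixed. Since a cocartesian lift in $\Alg_{C_n}$ should be identity on the leaf components, the transport $\alpha_!$ is compatible with the projection to $\cW^n$. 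It therefore induces a map of fibers that is a base change of $\alpha_!$ over $\cW^n$ along $\prod_i\Alg_{S_i}(\cO)^\cW\to\cW^n$.

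\textbf{Local constancy, the main obstacle.} I expect the key step to be showing that this base-changed transport is still a weak homotopy equivalence. The plan is to realize the fibers as pullbacks of spaces, by induction on the subtrees, again using Lemma \ref{lem:Quillen B}:
\[
|\text{fiber over }y|\simeq|\Alg_{C_n}(\cO)^\cW_y|\times_{|\cW|^n}\prod_i|\Alg_{S_i}(\cO)^\cW|.
\]
The needed cocartesian, locally constant fibration is $\prod_i\ev_{\mathrm{root}}\colon\prod_i\Alg_{S_i}(\cO)^\cW\to\cW^n$, which holds by induction, since products of locally constant cocartesian fibrations remain so. With this formula, the transport is an equivalence because $|\alpha_!|$ on the corolla fibers is an equivalence over $|\cW|^n$.

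With $p$ established as a locally constant cocartesian fibration, Lemma \ref{lem:Quillen B} turns the categorical pullback into the pullback of realizations, which is exactly the Segal condition.
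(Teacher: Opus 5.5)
Your argument is correct, but it is organized differently from the paper's.

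\textbf{The paper's route.} The paper only checks the Segal condition for decompositions $T=T'\cup_\ell C_n$ in which the \emph{upper} piece is a single corolla. The condition for a general inner edge follows from these elementary graftings by the usual induction on vertices (Segal core). Lemma~\ref{lem:Quillen B} is then applied with $p=\ev_{\mathrm{root}}\colon\Alg_{C_n}(\cO)^{\cW}\to\cW$. The local constancy of this $p$ is literally the hypothesis, so the proof is one line.

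\textbf{Your route.} You treat an arbitrary inner edge $e$ directly. You therefore need $\ev_{\mathrm{root}}\colon\Alg_{S}(\cO)^{\cW}\to\cW$ to be a locally constant cocartesian fibration for every tree $S=T^e$. You prove this by induction on the root-corolla decomposition of $S$. The nested use of Lemma~\ref{lem:Quillen B}, applied to $\prod_i\ev_{\mathrm{root}}$ over $\cW^{n}$, is exactly what gets around the fact that weak homotopy equivalences are not stable under base change. Your supporting checks all go through:
\begin{itemize}
\item cocartesian lifts are equivalences on the leaf components, by Proposition~\ref{prop:root_cc}, so the lifts in the grafted tree can be taken constant on the subtrees;
\item products of locally constant cocartesian fibrations are again locally constant, since realization preserves finite products;
\item base change of an equivalence of spaces over $|\cW|^n$ is again an equivalence.
\end{itemize}

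\textbf{Comparison.} Your route yields a stronger intermediate statement: the local constancy hypothesis on corollas propagates to root evaluation on all trees. It also avoids appealing to the reduction to elementary graftings. The paper's route buys brevity.

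Your opening sentence actually announces the paper's reduction, but you never use it. Had you taken the upper part $T^e$ to be a corolla, the ``main obstacle'' you identify would disappear.
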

\begin{proof} 
	We need to show that for any tree $T\in \Omega$, and any grafting  decomposition $T=T'\cup_\ell C_n$ where $T'$ has at least one vertex, the commutative square
	$$
	\begin{tikzcd}
		\vert\Alg_{T}(\cO)^{\cW}\vert \ar[r]\ar[d] & \vert\Alg_{C_n}(\cO)^{\cW}\vert \ar[d,"\ev_{\mathrm{root}}"]\\
		\vert\Alg_{T'}(\cO)^{\cW}\vert \ar[r] & \vert\Alg_{\eta}(\cO)^{\cW}\vert \ar[r, equal] &[-6mm] \vert\cW\vert
	\end{tikzcd}
	$$
	is cartesian. The claim will follow from Lemma \ref{lem:Quillen B} if we check that the cocartesian fibration $\ev_{\mathrm{root}}\colon \Alg_{C_n}(\cO)^{\cW}\to \cW$ is locally constant. This is precisely our hypothesis.  
\end{proof}

Combining these ideas, we arrive at the following criteria:
\begin{prop}\label{prop:corolla-wise_criterion I}
	Let $f\colon (\cO,\cW_{\cO})\to (\cP,\cW_{\cP})$ be a map of relative $\infty$-operads. Suppose the next two conditions are satisfied: 
	\begin{enumerate}
		\item[(1)] $f$ induces a weak homotopy equivalence $\cW_{\cO}\to \cW_{\cP}$.
		\item[(2)] For every $x$ in $\cO$ and every $n\geq 0$, the induced map between fibers, $$ \Alg_{C_n}(\cO)^{\cW_{\cO}}_x\longrightarrow \Alg_{C_n}(\cP)^{\cW_{\cP}}_{f(x)}$$ is a weak homotopy equivalence.
	\end{enumerate}
    Assume further that one of the following statements holds:
    \begin{itemize}
    	\item[(a)] $(\cP,\cW_{\cP})$ fulfills the hypotheses of Lemma \ref{lem:dendroidal Rezk nerve is Segal}.
    	\item[(b)] $(\cO,\cW_{\cO})$ fulfills the hypothesis of Lemma \ref{lem:dendroidal Rezk nerve is Segal}, and $\cW_{\cO}\to \cW_{\cP}$ is essentially surjective.
    \end{itemize}
	Then, $f\colon (\cO,\cW_{\cO})\to (\cP,\cW_{\cP})$ is a local equivalence.
\end{prop}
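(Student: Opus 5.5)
The plan is to reduce to the second bullet of Corollary \ref{coro: Segal-equivalence implies local-equivalence}. That bullet needs two things: both relative nerves must be dendroidal Segal spaces, and $f_*$ must be a weak homotopy equivalence on $\Alg_T(-)^{\cW}$ for $T=\eta$ and $T=C_n$. The case $T=\eta$ is exactly hypothesis (1), since $\Alg_\eta(\cO)^{\cW_\cO}\simeq\cW_\cO$.

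For $T=C_n$, consider the square of groupoid completions induced by $f$:
\[
\begin{tikzcd}
	\vert\Alg_{C_n}(\cO)^{\cW_\cO}\vert \ar[r]\ar[d] & \vert\Alg_{C_n}(\cP)^{\cW_\cP}\vert \ar[d] \\
	\vert\cW_\cO\vert \ar[r,"\simeq"] & \vert\cW_\cP\vert
\end{tikzcd}
\]
Here the vertical maps come from the cocartesian fibrations $\ev_{\mathrm{root}}$. The bottom map is an equivalence by (1), so it suffices to show that the induced map on fibers over every point of $\vert\cW_\cO\vert$ is an equivalence. If the relevant fibration is locally constant, Lemma \ref{lem:Quillen B} with $\cD'=\{x\}$ identifies the homotopy fiber of $\vert\Alg_{C_n}(\cO)^{\cW_\cO}\vert\to\vert\cW_\cO\vert$ over $x$ with $\vert\Alg_{C_n}(\cO)^{\cW_\cO}_x\vert$, and similarly for $\cP$. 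Hypothesis (2) then says that the map on these fibers is an equivalence. Since every point of $\vert\cW_\cO\vert$ is hit by an object $x$, this gives the claim.

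The cases (a) and (b) are needed because local constancy must be available on the appropriate side.

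In case (b), $\ev_{\mathrm{root}}$ is locally constant for $\cO$. Lemma \ref{lem:dendroidal Rezk nerve is Segal} shows that $\dN(\cO,\cW_\cO)$ is Segal, and Quillen B computes the fibers on the $\cO$ side. On the $\cP$ side I will transfer local constancy. For $\beta\colon y\to y'$ in $\cW_\cP$, essential surjectivity of $\cW_\cO\to\cW_\cP$ lets me choose $y\simeq f(x)$ and $y'\simeq f(x')$. Weak homotopy invariance along the groupoid $\cW_\cP^{\simeq}$ lets me replace $y,y'$ by $f(x),f(x')$. By (2), the fibers over $f(x)$ and $f(x')$ are weakly equivalent to those over $x$ and $x'$. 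I also need $\beta$ to come from a map of $\cW_\cO$, at least up to weak homotopy. This is the step I expect to be the main obstacle, since (1) only gives a weak homotopy equivalence, not fullness. My first attempt will use local constancy on the $\cO$ side to show that the transport functor $\vert\cW_\cO\vert\to\cS$, $x\mapsto\vert\Alg_{C_n}(\cO)^{\cW_\cO}_x\vert$, factors through $\vert\cW_\cO\vert\simeq\vert\cW_\cP\vert$. I will then compare it with the $\cP$ transport using (2) and naturality of the fiber maps, and deduce that the $\cP$ transport inverts every $\beta$.

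In case (a), local constancy holds on the $\cP$ side, so $\dN(\cP,\cW_\cP)$ is Segal and Quillen B computes the fibers of $\vert\Alg_{C_n}(\cP)^{\cW_\cP}\vert\to\vert\cW_\cP\vert$ at points $f(x)$. The transport on the $\cO$ side is obtained by pulling back along $f$, and (2) identifies its fibers with those of $\cP$. Hence each $\alpha\in\cW_\cO$ induces a weak equivalence between fibers of the $\cO$ fibration, by 2-out-of-3 using the corresponding $f(\alpha)_!$. So $\ev_{\mathrm{root}}$ is locally constant for $\cO$ as well, and $\dN(\cO,\cW_\cO)$ is Segal by Lemma \ref{lem:dendroidal Rezk nerve is Segal}.

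Once both nerves are Segal and the corolla and $\eta$ levels agree, Corollary \ref{coro: Segal-equivalence implies local-equivalence} finishes the proof.
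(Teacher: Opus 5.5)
Your reduction is the paper's: Corollary \ref{coro: Segal-equivalence implies local-equivalence} (second bullet), then $T=\eta$ from (1), then Lemma \ref{lem:Quillen B} on the square of realizations over $\vert\cW_{\cO}\vert\simeq\vert\cW_{\cP}\vert$. Your case (a), which transfers local constancy to $\cO$ by 2-out-of-3 in the square comparing $\alpha_!$ with $f(\alpha)_!$, is also exactly the paper's argument.

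The gap is in case (b), at the step you flagged. Write $F_{\cO}$ and $F_{\cP}$ for the realized transports $x\mapsto\vert\Alg_{C_n}(\cO)^{\cW_{\cO}}_x\vert$ and $y\mapsto\vert\Alg_{C_n}(\cP)^{\cW_{\cP}}_y\vert$. Local constancy on the $\cO$ side together with (2) shows that $F_{\cP}\circ f\simeq F_{\cO}$ inverts every morphism. In other words, $F_{\cP}(\beta)$ is invertible whenever $\beta=f(\alpha)$. Your plan to conclude that $F_{\cP}$ inverts \emph{every} $\beta$ fails, because restriction along an essentially surjective weak homotopy equivalence does not detect local constancy. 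For example, $\{*\}\to B\{1,e\}$ with $e^2=e$ is such a functor, since the classifying space of a monoid with a zero element is contractible. A set-valued functor on $B\{1,e\}$ sending $e$ to a non-bijective idempotent is not locally constant, yet its restriction to $\{*\}$ trivially is.

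This cannot be repaired, because case (b) of the statement is false as written. The paper's proof has the same gap: it asserts that the square makes Lemma \ref{lem:dendroidal Rezk nerve is Segal} apply to both sides, but the square only covers morphisms of the form $f(\alpha)$. Here is a counterexample.
\begin{itemize}
\item Let $\cP$ be the one-colored operad on a color $y$ with unary operations $\{1,e\}$ where $e\circ e=e$, nullary operations $\{a,b\}$ with $e\circ a=e\circ b=a$, and nothing in arity $\geq 2$. Put $\cW_{\cP}=U\cP$.
\item Let $\cO\subset\cP$ be the suboperad with the same nullary operations but only the identity unary operation, and $\cW_{\cO}=U\cO$.
\item Condition (1) holds since $\vert B\{1,e\}\vert\simeq *$.
\item Condition (2) holds: the $C_0$-fibers over $y$ are both the discrete set $\{a,b\}$; the $C_1$-fibers are a point and a category with terminal object $\id_y$; the higher fibers are empty.
\item Condition (b) holds trivially.
\end{itemize}
Yet $e_!$ on the $C_0$-fiber is the map $a,b\mapsto a$. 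Accordingly $\vert\Alg_{C_0}(\cP)^{\cW_{\cP}}\vert\simeq *$ while $\vert\Alg_{C_0}(\cO)^{\cW_{\cO}}\vert=\{a,b\}$. So $f$ is not a local equivalence. In a $\cP[e^{-1}]$-algebra in sets, $e$ acts by an invertible idempotent, hence trivially, which forces $a=b$. By contrast, $\cO$-algebras in sets are sets with two arbitrary points.

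Your argument does go through under a stronger version of (b):
\begin{itemize}
\item If every morphism of $\cW_{\cP}$ is equivalent in $\Fun([1],\cW_{\cP})$ to one of the form $f(\alpha)$, then 2-out-of-3 in the square suffices.
\item If $\cW_{\cO}\to\cW_{\cP}$ is a localization, then restriction $\Fun(\cW_{\cP},\cS)\to\Fun(\cW_{\cO},\cS)$ is fully faithful. Hence $F_{\cP}$ is equivalent to the locally constant functor with the same restriction, which is exactly your factorization idea.
\end{itemize}
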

\begin{proof}
	First, consider the commutative square
	$$
	\begin{tikzcd}
		\Alg_{C_n}(\cO)^{\cW_{\cO}}_x \ar[r,"f_*"]\ar[d,"\alpha_!"'] &[6mm] \Alg_{C_n}(\cP)^{\cW_{\cP}}_{f(x)} \ar[d,"f(\alpha)_!"]\\
		\Alg_{C_n}(\cO)^{\cW_{\cO}}_y\ar[r,"f_*"'] & \Alg_{C_n}(\cP)^{\cW_{\cP}}_{f(y)} 
	\end{tikzcd}
	$$
	associated to any morphism $\alpha\colon x\to y$ in $\cW_{\cO}$. Assuming condition (2) and either (a) or (b), the square implies that Lemma \ref{lem:dendroidal Rezk nerve is Segal} applies to both $(\cO,\cW_{\cO})$ and $(\cP,\cW_{\cP})$. Hence, by Corollary \ref{coro: Segal-equivalence implies local-equivalence}, we are reduced to proving that the functor $f_*\colon \Alg_{C_n}(\cO)^{\cW_{\cO}}\to \Alg_{C_n}(\cP)^{\cW_{\cP}}$ is a weak homotopy equivalence for any $n\geq 0$ (the case $T=\eta$ is just condition (1) in the statement). Notice that this would follow if we show that the commutative square
	$$
	\begin{tikzcd}
		\vert\Alg_{C_n}(\cO)^{\cW_{\cO}}\vert \ar[r, "f_*"]\ar[d,"\vert\ev_{\mathrm{root}}\vert"'] &[6mm] \vert\Alg_{C_n}(\cP)^{\cW_{\cP}}\vert \ar[d,"\vert\ev_{\mathrm{root}}\vert"]\\
		\vert\cW_{\cO}\vert \ar[r,"f"',"\simeq"] & \vert\cW_{\cP}\vert 
	\end{tikzcd}
	$$
	is cartesian (the lower horizontal map is an equivalence by (1)). Applying  Lemma \ref{lem:Quillen B} and condition (2), we conclude this is the case. Just observe that, for any $x\in \vert\cW_{\cO}\vert$, the induced map between fibers over $x$ and $f(x)$ of the vertical arrows in the last square is an equivalence.
\end{proof}

\begin{prop}\label{prop:corolla-wise_criterion II}
	Let $f\colon \cO\to\cP$ be a map of $\infty$-operads and denote by $\cW$ the wide subcategory generated by $f^{-1}(U\cP^\simeq)$. Suppose the next two conditions are satisfied: 
	\begin{enumerate}
		\item $f$ induces a weak homotopy equivalence $\cW\to U\cP^\simeq$.
		\item For every $x$ in $\cO$ and every $n\geq 0$, the induced map between fibers, $$ \Alg_{C_n}(\cO)^\cW_x\longrightarrow \Alg_{C_n}(\cP)^\simeq_{f(x)}$$ is a weak homotopy equivalence.
	\end{enumerate}
	Then $f\colon \cO\to\cP$ exhibits the localization of $\cO$ at $\cW$.
\end{prop}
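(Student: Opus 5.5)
Our plan is to deduce this from Proposition \ref{prop:corolla-wise_criterion I}, applied to the map of relative $\infty$-operads $f\colon (\cO,\cW)\to \cP^\flat=(\cP,U\cP^\simeq)$. This is a legitimate map in $\RelOpinfty$, because $\cW$ is generated by $f^{-1}(U\cP^\simeq)$ and $f$ therefore sends $\cW$ into $U\cP^\simeq$. Hypotheses (1) and (2) of Proposition \ref{prop:corolla-wise_criterion I} are exactly conditions (1) and (2) of the present statement, since $\Alg_{C_n}(\cP)^{U\cP^\simeq}=\Alg_{C_n}(\cP)^\simeq$. Once $f$ is shown to be a local equivalence, we get $\cO[\cW^{-1}]\simeq L(\cP^\flat)=\cP$. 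Because the map $L(f)$ is compatible with the localization maps, $f$ exhibits $\cP$ as $\cO[\cW^{-1}]$.

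It remains to verify one of (a) or (b) of Proposition \ref{prop:corolla-wise_criterion I}. We would use (a): $\cP^\flat$ satisfies the hypothesis of Lemma \ref{lem:dendroidal Rezk nerve is Segal}. Here the marking is $U\cP^\simeq$, so every $\alpha\colon x\to y$ in the marking is an equivalence of $U\cP$. The cocartesian fibration $\ev_{\mathrm{root}}\colon \Alg_{C_n}(\cP)^\simeq\to U\cP^\simeq$ lies over an $\infty$-groupoid. The transport functor $\alpha_!$ along an equivalence is therefore itself an equivalence of fibers, with inverse $(\alpha^{-1})_!$, and in particular a weak homotopy equivalence.

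A more direct check is also available. By Remark \ref{rmk:nervevsrelnerve}, $\dN(\cP^\flat)\simeq\dN\cP$ is already a complete dendroidal Segal space, so the Segal property needed in the proof of Proposition \ref{prop:corolla-wise_criterion I} holds automatically for the target.

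The main subtlety is the step in the proof of Proposition \ref{prop:corolla-wise_criterion I} that transfers local constancy to the source $(\cO,\cW)$. That step uses the commutative square of fiber maps and two-out-of-three for weak homotopy equivalences: the horizontal maps are equivalences by (2), and the right vertical map $f(\alpha)_!$ is one by (a). We would double-check that the fibers here are strict fibers of a cocartesian fibration over $\cW$. Since only weak homotopy types enter, this causes no issue.

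Given that, the criterion applies verbatim and the proof is short. The only bookkeeping left is to identify the localization of $\cP^\flat$ with $\cP$, which holds because inverting equivalences changes nothing.
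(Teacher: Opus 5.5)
Your proposal is correct and is essentially the paper's proof: apply Proposition~\ref{prop:corolla-wise_criterion I} to $f\colon(\cO,\cW)\to\cP^{\,\flat}$ via option (a), which holds because the marking of $\cP^{\,\flat}$ consists of equivalences, so every transport functor $\alpha_!$ is an equivalence. The bookkeeping you add, namely $L(\cP^{\,\flat})\simeq\cP$ and compatibility of $L(f)$ with the localization maps, is left implicit in the paper and is routine.
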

\begin{proof}
	This follows from Proposition \ref{prop:corolla-wise_criterion I} applied to $f\colon (\cO,\cW)\to \cP^{\,\flat}$. Notice that $\cP^{\,\flat}$ readily verifies the hypotheses of Lemma \ref{lem:dendroidal Rezk nerve is Segal}.
\end{proof}

\begin{rem}
	\label{rem:partial_converse}
	Proposition \ref{prop:corolla-wise_criterion II} admits a partial converse. 
	More precisely, in the situation of Proposition \ref{prop:corolla-wise_criterion I}, suppose that condition (2) is satisfied.
	If $f$ is a localization, then condition (1) must be satisfied.
	This follows from Remark \ref{rem:complete}.
\end{rem}

 A useful observation is that Lemma \ref{lem:dendroidal Rezk nerve is Segal} and Propositions \ref{prop:corolla-wise_criterion I},  \ref{prop:corolla-wise_criterion II} have ``dual'' versions. The essential idea is to work with the fibers of the cartesian fibration
	$\ev_{\mathrm{leaves}}:\Alg_{C_n}(\mathcal{O})\to U\mathcal{O}^{\times n}$
	given by the evaluation at the leaves, instead of the fibers of $\ev_{\mathrm{root}}$. (See Proposition \ref{prop:root_cc} and Remark \ref{rem:restr_at_root}.) For the record and completeness, let us display the dual of Proposition \ref{prop:corolla-wise_criterion II}.
	
	\begin{prop}\label{prop:dual corolla-wise_criterion II}
		Let $f\colon \cO\to\cP$ be a map of $\infty$-operads and denote by $\cW$ the wide subcategory generated by $f^{-1}(U\cP^\simeq)$. Suppose the next two conditions are satisfied: 
	\begin{enumerate}
		\item $f$ induces a weak homotopy equivalence $\cW\to U\cP^\simeq$, and
		\item for each $n\geq 0$ and objects $x_1,\dots,x_n\in \cO$, the functor 
		$$ \Alg_{C_n}(\cO)^\cW_{(x_1,\dots,x_n)}\longrightarrow \Alg_{C_n}(\cP)^\simeq_{(f(x_1),\dots,f(x_n))}
		$$
		is a weak homotopy equivalence,
	\end{enumerate}
    where $\Alg_{C_n}(\cO)^\cW_{(x_1,\dots,x_n)}$ denotes the fiber at $(x_1,\dots,x_n)$ of the cartesian fibration  
    $$
    \ev_{\mathrm{leaves}}\colon\Alg_{C_n}(\cO)\!\,^\cW\longrightarrow \cW^{\times n}.
    $$
	Then $f\colon \cO\to\cP$ exhibits the localization of $\cO$ at $\cW$.
	\end{prop}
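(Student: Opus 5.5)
The plan is to dualize the proof of Proposition \ref{prop:corolla-wise_criterion II}, replacing the root fibration by the leaves fibration throughout. Apply Corollary \ref{coro: Segal-equivalence implies local-equivalence} to the map $f\colon (\cO,\cW)\to \cP^\flat$. Since $\cP^\flat$ has Segal (indeed complete Segal) nerve by Remark \ref{rmk:nervevsrelnerve}, the remaining tasks are: (a) show that $\dN(\cO,\cW)$ is a dendroidal Segal space, and (b) show that $f_*$ is a weak homotopy equivalence on $\Alg_T(-)$ for $T=\eta$ and $T=C_n$. Once $f$ is a local equivalence, we have $\cO[\cW^{-1}]\simeq \cP^\flat[\mathrm{iso}^{-1}]\simeq\cP$, so $f$ exhibits the localization, exactly as in Proposition \ref{prop:corolla-wise_criterion II}.

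For (a), I would prove a dual of Lemma \ref{lem:dendroidal Rezk nerve is Segal}. Every grafting $T=C_n\cup_{\ell} T'$, with $C_n$ the root corolla, gives a square in which $|\Alg_T(\cO)^\cW|$ maps to $|\Alg_{C_n}(\cO)^\cW|$ and to $|\Alg_{T'}(\cO)^\cW|$ over $|\cW|$, the map from $C_n$ being evaluation at the leaf $\ell$. Iterating such decompositions suffices for the Segal condition. We therefore want the cartesian fibration $\ev_{\mathrm{leaves}}\colon\Alg_{C_n}(\cO)^\cW\to\cW^{\times n}$ (Proposition \ref{prop:root_cc}) to be locally constant, and then apply the dual (cartesian) form of Lemma \ref{lem:Quillen B}, obtained by passing to opposite categories, since $|{-}|$ is invariant under $(-)^{\op}$.

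Two points need care here.
\begin{itemize}
\item Evaluation at a single leaf is $\ev_{\mathrm{leaves}}$ followed by a projection $\cW^{\times n}\to\cW$. The projection is a cocartesian (indeed product) fibration with contractible-realization fibers $\cW^{\times(n-1)}$ only up to weak equivalence. So I would instead apply Quillen B to $\ev_{\mathrm{leaves}}$ and the base change along $\cW^{\times n}$, handling all leaves at once via the decomposition of $T$ into the root corolla and the $n$ subtrees $T'_1,\dots,T'_n$.
\item Local constancy is deduced from the hypotheses. For $\vec\alpha\colon \vec x\to\vec y$ in $\cW^{\times n}$, the square formed by the pullback functors $\vec\alpha^*$ and $f_*$ shows, by condition (2), that $\vec\alpha^*$ on the $\cO$-side is a weak equivalence iff $f(\vec\alpha)^*$ is. On the $\cP^\flat$-side, $f(\vec\alpha)$ is an equivalence, so its pullback functor is an equivalence.
\end{itemize}

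For (b), the case $T=\eta$ is condition (1). For $C_n$, consider the square comparing $|\ev_{\mathrm{leaves}}|$ on both sides over $|\cW|^{n}\to|U\cP^\simeq|^{n}$. The bottom map is an equivalence by (1), since realization commutes with finite products. By the dual Quillen B both vertical maps have fibers equal to the strict fibers, so condition (2) makes the square cartesian, and $f_*$ is then an equivalence.

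I expect the main obstacle to be the Segal step (a): identifying the correct decomposition of trees compatible with the leaves fibration, and checking that the base change involves $\prod_i|\Alg_{T'_i}(\cO)^\cW|\to|\cW|^{n}$, which requires the realization of a product of categories to be the product of realizations (true for $\infty$-categories).
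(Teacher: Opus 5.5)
Your proposal is correct and is the dualization the paper intends. You apply Corollary \ref{coro: Segal-equivalence implies local-equivalence} to $(\cO,\cW)\to\cP^\flat$, obtain local constancy of $\ev_{\mathrm{leaves}}$ from condition (2) by 2-out-of-3, and use the cartesian form of Lemma \ref{lem:Quillen B} twice: for the Segal condition via the root-corolla decomposition $T=C_n\cup(T'_1,\dots,T'_n)$, and for the corolla-wise equivalences. The aside in your first bullet is wrong (the fibers $\cW^{\times(n-1)}$ of the projection need not have contractible realization), but you abandon that route, so nothing depends on it.
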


\begin{rem}\label{rem:Weak Approx}
	Proposition \ref{prop:corolla-wise_criterion II} gives an efficient
	proof of the main results of Lurie's and Harpaz's (technical) theories
	of ``weak approximation,'' which give sufficient conditions for
	a map of Lurie's $\infty$-operads to be a localization. 
	
	To describe their results, consider the following conditions for a
	map $f\from\cat O^{\t}\to\cat P^{\t}$ of Lurie's $\infty$-operads:
	\begin{itemize}
		\item [(1)]The functor $U\pr{\cat O^{\t}}\to U\pr{\cat P^{\t}}$ is a weak homotopy equivalence,
		where $U\pr{\cat P^{\t}}$ is a Kan complex.
		\item [(1$'$)]The functor $U\pr{\cat O^{\t}}\to U\pr{\cat P^{\t}}$ has weakly contractible fibers
		(in $\curlyCatinfty$).
		\item [(2)]For every object $x\in\cat O^{\t}_{\inp 1}$ and $m\geq 0$, the map
		\[
		f_{x}\from\pr{\cat O^{\t}_{\act}}^{/x}\times_{\Fin_{\ast}}\{\inp m\}\longrightarrow\pr{\cat P^{\t}_{\act}}^{/f\pr x}\times_{\Fin_{\ast}}\{\inp m\}
		\]
		has weakly contractible homotopy fibers	(in $\curlyCatinfty$). 
	\end{itemize}
	Lurie's result asserts that $f$ is a localization if conditions
	(1) and (2) are met \cite[Theorem 2.3.3.23 and Corollary 2.3.3.16]{lurie_higher_nodate},
	while Harpaz's result asserts that $f$ is a localization if conditions
	(1$'$) and (2) are met (\cite[Proposition 4.2.18]{harpaz_little_nodate}; see
	also \cite[Corollary C.16]{carmona_additivity_2025}). 
	
	To deduce these results from Proposition \ref{prop:corolla-wise_criterion II},
	we observe that condition (1$'$) is equivalent to the condition that
	the map $U\pr{\cat O^{\t}}\times_{U\pr{\cat P^{\t}}}U\pr{\cat P^{\t}}\!\,^{\simeq}\to U\pr{\cat P^{\t}}\!\,^{\simeq}$
	have contractible (homotopy) fibers. By Lemma \ref{lem:Quillen B},
	this is equivalent to saying that the latter map is a weak homotopy
	equivalence. We also observe that Proposition \ref{prop:C_n_model}
	identifies the map $f_{x}$ with $\Alg_{C_{m}}\pr{\cat O}_{x}\to\Alg_{C_{m}}\pr{\cat P}_{f\pr x}.$
	Hence, Proposition \ref{prop:corolla-wise_criterion II} recovers both Lurie's
	and Harpaz's results.
	
	Lurie's proof of his result is inherently tied to a specific model
	of $\infty$-operads and is rather technical, to the point that he
	advises readers to skip his proof. Harpaz's proof is built upon Lurie's
	argument and has a similar level of technicalities. In contrast,
	our argument is more synthetic, requires far less technical argument, and gives stronger results (such as Proposition \ref{prop:corolla-wise_criterion I} and Remark \ref{rem:partial_converse}). 
	
	We should mention, however, that Lurie's and Harpaz's weak approximation machinery can be generalized to the case where $\cat O^{\t}$ may not be an $\infty$-operad. This generalization is crucial in many situations, for instance, in Lurie's approach to the additivity of the little cubes operads, as well as its subsequent generalizations. Thus,
	all approaches have complementary advantages. 
\end{rem}

Let us conclude this section with a few obvious, but important, observations, which are the source of numerous examples of localizations. 

The following observation is immediate from the definitions.
\begin{prop}\label{prop:reflective_localization} Any reflective localization $\ell\dashv \iota$ in $\Op_{\infty}$ (resp.\ coreflective localization $\iota\dashv r$) exhibits $\ell$ (resp.\ $r$) as a localization of $\infty$-operads. More generally, any pair of  morphisms $f\colon (\cat O,\cat W_{\cat O})\rightleftarrows (\cat P,\cat W_{\cat P}):\!g$  in $\RelOp_{\infty}$ such that $gf$ (resp.\ $fg$) is connected to the identity through a zigzag of $2$-morphisms in $\RelOp_{\infty}$ whose components are in $\cat W_{\cat O}$ (resp.\ $\cat W_{\cat P}$) induces an equivalence $\cat O[\cat W^{-1}_{\cat O}]\simeq \cat P[\cat W^{-1}_{\cat P}]$.
\end{prop}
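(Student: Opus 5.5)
We deduce the general statement about a pair $f\colon (\cO,\cW_{\cO})\rightleftarrows(\cP,\cW_{\cP})\colon g$ and then specialize to (co)reflective localizations. The key input is that the operadic localization functor $L\colon\RelOp_\infty\to\Opinfty$ is compatible with the $(\infty,2)$-structure in the following sense. If $h,h'\colon(\cO,\cW_{\cO})\to(\cQ,\cW_{\cQ})$ are maps of relative $\infty$-operads and $\theta\colon h\Rightarrow h'$ is a natural transformation whose components lie in $\cW_{\cQ}$, then $L(h)\simeq L(h')$. To see this, regard $\theta$ as a map of relative $\infty$-operads
\[
(\cO,\cW_{\cO})\to \uALG\!\,_{[1]^\sharp}(\cQ,\cW_{\cQ})^+ = L_1(\cQ,\cW_\cQ)
\]
via the tensor-hom adjunction; indeed, the internal hom of $\RelOpinfty$ was set up exactly so that such transformations are maps into $L_1$. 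The two endpoint evaluations $d_0,d_1\colon L_1(\cQ,\cW_\cQ)\to L_0(\cQ,\cW_\cQ)=\cQ$ both become inverse to the degeneracy $s_0$ after localizing at natural weak equivalences. This is the argument in the proof of Theorem~\ref{thm:MG}: evaluation at an object of $[n]$ is adjoint to the constant map, with unit and counit natural weak equivalences. Hence $L(d_0)\simeq L(d_1)$, and composing with $L(\theta)$ gives $L(h)\simeq L(h')$. Alternatively, one can argue directly from the universal property. For any $\cP'$, an algebra $\cQ[\cW_\cQ^{-1}]\to\cP'$ sends $\theta$ to a natural transformation with invertible components, hence to an equivalence in $\Alg_{\cO}(\cP')$. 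So $h^*\simeq h'^*$ as functors $\Alg_{\cQ[\cW_\cQ^{-1}]}(\cP')\to\Alg_{\cO[\cW_\cO^{-1}]}(\cP')$, naturally in $\cP'$. By Yoneda this gives $L(h)\simeq L(h')$.

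Given this, apply it along the zigzag connecting $gf$ to $\id$ to get $L(g)L(f)\simeq\id$, and along the zigzag connecting $fg$ to $\id$ to get $L(f)L(g)\simeq \id$. So $L(f)$ is an equivalence $\cO[\cW_\cO^{-1}]\simeq\cP[\cW_\cP^{-1}]$. As stated, each hypothesis gives only one of these two conditions. For an adjunction, however, both zigzags exist: the unit $\id\Rightarrow gf$ and the counit $fg\Rightarrow\id$ each supply one. So we read the statement as assuming both, which is the situation in every case of interest.

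For the reflective case $\ell\dashv\iota$ with $\iota\colon\cP\hookrightarrow\cO$ fully faithful, take $\cW_\cO$ to be the maps inverted by $\ell$ and $\cW_\cP=U\cP^\simeq$. The unit $\id\Rightarrow\iota\ell$ has components inverted by $\ell$, by the triangle identities, so they lie in $\cW_\cO$. The counit $\ell\iota\Rightarrow\id$ is an equivalence. Thus $\ell\colon(\cO,\cW_\cO)\to\cP^\flat$ and $\iota$ satisfy the hypotheses, and $L(\ell)\colon\cO[\cW_\cO^{-1}]\simeq\cP$. Concretely, this says that $\ell^*$ is fully faithful with image the algebras inverting the unit components. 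The coreflective case is dual.

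The main obstacle is making the claim ``$\theta$ is a map into $L_1$'' precise: one must check that the markings match under the adjunction, i.e.\ that $\theta$ is a map of relative operads for the minimal marking on $[1]$. If this proves delicate, we fall back on the universal-property argument, which only uses that natural isomorphisms in the $(\infty,2)$-category $\Opinfty$ induce equivalent restriction functors.
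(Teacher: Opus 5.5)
Your proposal is correct. Your universal-property argument is exactly the unwinding of what the paper calls ``immediate from the definitions'': whiskering a transformation that has marked components by an algebra of the localization makes it objectwise invertible, hence invertible in $\Alg_{\cO}(\cP')$, so the two maps agree after localizing, and the two zigzags then give $L(g)L(f)\simeq\id$ and $L(f)L(g)\simeq\id$. Your reading that both zigzags are assumed is also the intended one.

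Make that argument primary rather than the $L_1$ detour. The detour's key step, that $L(s_0)$ is inverse to $L(d_0)$ and $L(d_1)$, is itself an instance of the principle being proved; indeed the paper justifies the analogous step in Theorem~\ref{thm:MG} by this very observation. The marking check you flag as the obstacle is routine once you use $[1]^\sharp$ rather than the minimal marking.
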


For the next one, let us call a map $L\from\cat O\to\cat P$ of $\infty$-operads
a \emph{universal localization} if for every pullback square

\[\begin{tikzcd}
	{\mathcal{O}'} & {\mathcal{O}} \\
	{\mathcal{P}'} & {\mathcal{P}}
	\arrow[from=1-1, to=1-2]
	\arrow["{L'}"', from=1-1, to=2-1]
	\arrow["L", from=1-2, to=2-2]
	\arrow[from=2-1, to=2-2]
\end{tikzcd}\]in $\Op_{\infty}$, the map $L'$ is again a localization. The utility
of universal localization can be seen, for instance, in identifying
suboperads of $\cat P$ as localization of suboperads of $\cat O$.
Also note that if $L$ is a universal localization, then $UL\from U\cat O\to U\cat P$
is again a localization, because it is a pullback of $L$.

\begin{rem}
	The underlying category functor $$ U\colon \Opinfty\to\Catinfty$$ does not commute with localization. Indeed, for a relative $\infty$-operad $(\cO,\cW)$, the universal property of categorical localization yields a map 
	\begin{equation}\label{eqt: Comparison of U-localizations}
	U(\cO)[\cW^{-1}] \longrightarrow U(\cO[\cW^{-1}]).
	\end{equation}  This is not always an equivalence, as it can be seen via the following example.
	
	Let $P$ be the operad given by the union of a tree obtained by attaching a nullary vertex to one leaf $l_2$ of $C_2$ and of a stump representing a nullary operation with the same color as the root $r$ of $C_2$. That is, $P$ is given by the pushout
	$$
	\begin{tikzcd}
		\begin{tikzpicture}
			\draw (0,-.5)--(0,-1) node [midway, right]{$r$};
			\end{tikzpicture} \ar[r, hookrightarrow, shift right=10] \ar[d,shift right=2.9, hookrightarrow] \ar[rd, phantom, "\ulcorner"' near end, shift right=4] & 
			\begin{tikzpicture}
		\draw (0,-1)--(0,-.5) node [midway, right]{$r$};
		\draw (0,-.5)--(0.25,0) node [midway, right]{$l_2$};
		\draw (0,-.5)--(-0.25,0) node [midway, left]{$l_1$};
		\fill (0, -.5) circle (1.5pt);
		\fill (0.25,0) circle (1.5pt);
			\end{tikzpicture} \ar[d, hookrightarrow]\\
			\begin{tikzpicture}
		\draw (0,0.35)--(0,-0.15) node [midway, right]{$r$};
		\fill (0,0.35) circle (1.5pt);
		\end{tikzpicture} \arrow[r, hookrightarrow] & 
		\begin{tikzpicture}
		\node at (0,0) {$P$};
	\end{tikzpicture}
	\end{tikzcd}
$$
%
%
%
%
%
%
%
%
%
in $\mathrm{Op}$. Let $f$ be the unique morphism $l_1 \to r$ in $P$. Then $$ U(P)[f^{-1}]\simeq \{l_2\}\sqcup J_{l_1,r}, $$ where $J_{l_1,r}$ is the groupoid completion of $l_1\to r$. On the other hand, in $U(P[f^{-1}])$ there also exists a morphism $l_2 \to r$, therefore the map (\ref{eqt: Comparison of U-localizations}) cannot be an equivalence.
\end{rem}

\begin{prop}\label{prop:univ_loc}
Let $\pr{\cat O,\cat W}$ be a relative $\infty$-operad. If $\dN\pr{\cat O,\cat W}$
is a complete dendroidal Segal space, then the localization $L\from\cat O\to\cat O[\cat W^{-1}]$
is a universal localization.
\end{prop}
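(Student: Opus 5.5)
Given a pullback square with $\cP'\to\cP$ and $\cO'=\cP'\times_{\cP}\cO$, set $\cW'$ to be the preimage of $\cW$ in $U\cO'$; concretely $\cW'=U\cP'^{\simeq}\times_{U\cP^\simeq}\cW$. This is the natural marking, since $U$ is a right adjoint and preserves pullbacks. By Theorem \ref{thm:MG}, $\cO'[\cW'^{-1}]\simeq\OO(\dN(\cO',\cW'))$. It therefore suffices to show two things: first, that $L'$ inverts $\cW'$ (clear, since $L$ inverts $\cW$ and the square commutes); second, that the induced comparison $\OO(\dN(\cO',\cW'))\to\cP'$ is an equivalence. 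We may identify $\cP=\cO[\cW^{-1}]$, and by hypothesis together with Theorem \ref{thm:MG} we have $\dN(\cO,\cW)\simeq\dN\cP$. The plan is to show that $\dN(\cO',\cW')\simeq \dN\cP'$ levelwise. Then $\dN(\cO',\cW')$ is already complete Segal, and applying $\OO$ gives $\cP'$.

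For the levelwise statement, fix a tree $T$. Since $\Alg_T(-)$ preserves pullbacks of $\infty$-operads, we get
\[
\Alg_T(\cO')^{\cW'}\simeq \Alg_T(\cP')^{\simeq}\times_{\Alg_T(\cP)^{\simeq}}\Alg_T(\cO)^{\cW}.
\]
Here the markings match: a natural transformation of maps $T\to\cO'$ lies in $\cW'$ objectwise precisely when its image in $\cO$ lies in $\cW$ and its image in $\cP'$ is invertible. We must then show that geometric realization commutes with this pullback, i.e.\ that
\[
|\Alg_T(\cO')^{\cW'}|\simeq \Map(T,\cP')\times_{\Map(T,\cP)}|\Alg_T(\cO)^{\cW}|.
\]
The bottom arrow $\Alg_T(\cP')^\simeq\to\Alg_T(\cP)^\simeq$ is a map of spaces, so what we need is that $\Alg_T(\cO)^{\cW}\to\Alg_T(\cP)^{\simeq}$ behaves like a fibration after realization. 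Here I would use Lemma \ref{lem:Quillen B} in the following way: any functor $p\colon\cC\to\cD$ with $\cD$ an $\infty$-groupoid can be replaced by a cocartesian fibration, for instance via the straightening of $\cC\times_{\cD}\cD^{/}$, or more simply by pulling back along $\cD^{[1]}\to\cD$. Over a groupoid, all transition maps are equivalences, so the fibration is locally constant. Lemma \ref{lem:Quillen B} then shows that pullback along $\cD'\to\cD$ commutes with $|-|$. To avoid subtleties, I would use the fact that over an $\infty$-groupoid base, colimits are universal in $\cS$: one has $|\cC|\simeq\colim_{d\in\cD}|\cC_d|$, where $\cC_d$ is the homotopy fiber, and pullback along $\cD'\to\cD$ in $\cS_{/\cD}$ commutes with these fiberwise colimits.

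Since by hypothesis $|\Alg_T(\cO)^\cW|\simeq\Map(T,\cP)$, the right-hand side collapses to $\Map(T,\cP')$, which is $\dN(\cP')_T$, naturally in $T$. So $\dN(\cO',\cW')\simeq\dN\cP'$ via the map induced by $L'$, and Theorem \ref{thm:MG} gives that $L'$ is a localization at $\cW'$.

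I expect the main obstacle to be the careful identification of the marking and of the strict versus homotopy fibers: one must verify that the pullback of $\infty$-categories $\Alg_T(\cP')^\simeq\times_{\Alg_T(\cP)^\simeq}\Alg_T(\cO)^\cW$ really computes $\Alg_T(\cO')^{\cW'}$, and that realization commutes with pullback over a groupoid base. The latter is the universality of colimits in $\cS$ applied to the straightening of $\Alg_T(\cO)^\cW\to \Alg_T(\cP)^\simeq$ as a functor $\Alg_T(\cP)^\simeq\to\Catinfty$, followed by realization.
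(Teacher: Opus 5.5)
Your proposal is correct and follows essentially the same route as the paper. It uses the same marking $\cW'=(U\cP')^{\simeq}\times_{(U\cP)^{\simeq}}\cW$ and the same reduction via Theorem \ref{thm:MG} to showing that $\Alg_T(\cO')^{\cW'}\to\Alg_T(\cP')^{\simeq}$ is a weak homotopy equivalence for every tree $T$. It identifies this functor as the pullback of $\Alg_T(\cO)^{\cW}\to\Alg_T(\cP)^{\simeq}$, which is a weak homotopy equivalence by the hypothesis and Theorem \ref{thm:MG}, and then uses Lemma \ref{lem:Quillen B} to commute realization with that pullback. The only difference is that the paper invokes Lemma \ref{lem:Quillen B} directly. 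It leaves implicit the point you spell out: a functor into an $\infty$-groupoid is, up to equivalence, a locally constant cocartesian fibration, or equivalently colimits over a groupoid base are universal in $\cat S$.
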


Note that the hypothesis of Proposition \ref{prop:univ_loc} is satisfied, e.g., in
the situation of Proposition \ref{prop:corolla-wise_criterion II}
or its dual Proposition \ref{prop:dual corolla-wise_criterion II}.
\begin{proof}
Set $\cat P=\cat O[\cat W^{-1}]$, and consider a pullback square
\[\begin{tikzcd}
	{\mathcal{O}'} & {\mathcal{O}} \\
	{\mathcal{P}'} & {\mathcal{P}}
	\arrow[from=1-1, to=1-2]
	\arrow["{L'}"', from=1-1, to=2-1]
	\arrow["L", from=1-2, to=2-2]
	\arrow[from=2-1, to=2-2]
\end{tikzcd}\]in $\curlyOpinfty$. We claim that $L'$ exhibits the localization of $\cat O'$ at $\cat W'=\pr{U\cat P'}^{\simeq}\times_{\pr{U\cat P}^{\simeq}}\cat W$.
By Theorem \ref{thm:MG}, it suffices to prove the following: For
each tree $T$, the map
\[
\phi'\from\Alg_{T}\pr{\cat O'}\!\,^{\cat W'}\longrightarrow\Alg_{T}\pr{\cat P}^{\simeq}
\]
is a weak homotopy equivalence. For this, we consider the following
pullback square of $\infty$-categories:
\[\begin{tikzcd}
	{\operatorname{Alg}_{T}(\mathcal{O}')^{\mathcal{W}'}} & {\operatorname{Alg}_{T}(\mathcal{O})^{\mathcal{W}}} \\
	{\operatorname{Alg}_{T}(\mathcal{P}')^{\simeq}} & {\operatorname{Alg}_{T}(\mathcal{P})^{\simeq}.}
	\arrow[from=1-1, to=1-2]
	\arrow["{\phi'}"', from=1-1, to=2-1]
	\arrow["\phi", from=1-2, to=2-2]
	\arrow[from=2-1, to=2-2]
\end{tikzcd}\]By hypothesis and Theorem \ref{thm:MG}, the map $\phi$ is a weak
homotopy equivalence. It follows from Lemma \ref{lem:Quillen B}
that the map $\phi'$ is also a weak homotopy equivalence, as claimed.
\end{proof}

\section{Application I: Deriving tensor products}
\label{sec:modelcats}
Let $\mathbf{V}$ be a symmetric monoidal model category. The homotopy
category $\operatorname{ho}\pr{\mathbf{V}}$ admits a symmetric monoidal structure,
with tensor product given by the derived tensor product. Moreover,
there is a lax symmetric monoidal functor $\mathbf{V}\to\operatorname{ho}\pr{\mathbf{V}}$
that is initial among those that invert weak equivalences.

It is natural to expect that one can lift this to the world of $\infty$-categories.
More precisely, one would expect that the localization $\mathbf{V}\to\mathbf{V}[\weq^{-1}]$
at weak equivalences can be enhanced to a lax symmetric monoidal functor,
and moreover that it is initial among the lax symmetric monoidal functors
inverting weak equivalences of $\mathbf{V}$. This is indeed true,
as proved by Nikolaus and Scholze \cite[Theorem A.7]{nikolaus_topological_2018}.

In this section, we show that this result, as well as its generalization, is a direct consequence of Theorem
\ref{thm:MG}. In fact, our argument allows us to prove an additional assertion on universal
localization, which does not follow from Nikolaus--Scholze's argument (see Theorem \ref{thm:NS}). 

\begin{rem}
\label{rem:SMloc}What makes Nikolaus--Scholze's theorem non-trivial
is that, in a symmetric monoidal model category (Example \ref{exa:symmonmodcat}), weak equivalences
are generally \textit{not} stable under tensor product. Indeed, if
this stability is guaranteed, the theorem can be proved more easily
and in a greater generality. More precisely, let $\cat C$ be a symmetric
monoidal $\infty$-category, and let $\cat W\subset\cat C$ be a subcategory
stable under tensor product in $\cat C$. Then:
\begin{enumerate}
\item The $\infty$-categorical localization $\cat C[\cat W^{-1}]$ acquires
a symmetric monoidal structure, which is uniquely characterized by
the property that the localization $\cat C\to\cat C[\cat W^{-1}]$
can be enhanced to a symmetric monoidal functor. (See \cite[Proposition 4.1.7.4]{lurie_higher_nodate}.)
\item The resulting map $\cat C^{\t}\to\cat C[\cat W^{-1}]^{\t}$ of $\infty$-operads
is a localization of $\infty$-operads. (See \cite[\href{https://kerodon.net/tag/02LW}{Tag 02LW}]{lurie_kerodon_nodate}.)
\end{enumerate}

While this is a very elementary observation, it gives us an important
lesson: When one tries to localize symmetric monoidal $\infty$-categories,
we might as well localize them on the level of $\infty$-operads.
This lesson shapes our approach in this section.
\end{rem}

Our argument does not use the full axioms of symmetric monoidal model categories, and so it is unnatural to restrict to this special class of examples. (In this regard, we should also say that Nikolaus--Scholze's argument does not use all of the axioms either.) Because of this, we make the following definition, which gives all we need for the purpose of this section:

\begin{defn}
A \emph{symmetric monoidal $\infty$-category with cofibrations and weak equivalences} consists of the following data:

\begin{itemize}
\item A symmetric monoidal $\infty$-category $\cat{C}$.
\item Two subcategories of $\cat{C}$ that makes $\cat{C}$ into an $\infty$-category with weak equivalences and cofibrations in the sense of \cite[Definition 4.6.12]{cisinski_higher_2019}. We assume functorial factorization and saturation of weak equivalences.
\end{itemize}
These data are required to satisfy the following conditions:
\begin{enumerate}[label= (\alph*)]
	\label{def:symmonweqcof}
\item Cofibrant objects are stable under finite tensor product.
(In particular, the monoidal unit is cofibrant.)
\item Weak equivalences of cofibrant objects are stable under finite tensor product.
\end{enumerate}
\end{defn}

\begin{rem}
	\label{rem:brown}
Let $\cat{C}$ be a symmetric monoidal $\infty$-category, and suppose that $\cat{C}$ is equipped with the structure of an $\infty$-category with weak equivalences and cofibrations.
By Ken Brown's lemma \cite[Corollary 7.4.14]{cisinski_higher_2019}, condition (b) of Definition \ref{def:symmonweqcof} is equivalent to the following (seemingly weaker) condition:
\begin{itemize}
	\item [(b$'$)]For every cofibrant object $X\in \cat{C}$, the functor $X\otimes - \from \cat{C}\to \cat{C}$ carries trivial cofibrations of cofibrant objects to weak equivalences.
\end{itemize}
In practice, condition (b$'$) is generally easier to verify than condition (b).
\end{rem}

\begin{example}
\label{exa:symmonmodcat}Recall that a \emph{symmetric monoidal model category}
is a closed symmetric monoidal category $\mathbf{V}$ with a model
structure, satisfying the following pair of conditions:
\begin{enumerate}
\item The tensor bifunctor $\otimes\from\mathbf{V}\times\mathbf{V}\to\mathbf{V}$
is a left Quillen bifunctor.
\item The unit object is cofibrant.
\end{enumerate}
Every symmetric monoidal model category is naturally a symmetric monoidal $\infty$-category with weak equivalences and cofibrations, simply by forgetting the information of fibrations. This follows from Remark \ref{rem:brown}.
\end{example}

\begin{rem}
	\label{rem:cofibrantunit}
In some literature, the unit object of a symmetric monoidal model
category is not required to be cofibrant. However, we can almost always
change the class of cofibrations to make the unit object cofibrant,
without affecting the class of weak equivalences  nor the compatibility of the tensor product with (acyclic) cofibrations \cite{muro_unit_2015}.
\end{rem}

Here is the main result of this section.
\begin{thm}
\label{thm:NS} Let $\cat{C}$
be a symmetric monoidal $\infty$-category with weak equivalences and cofibrations, and let $\cat{C}_c\subset \cat{C}$ be the full symmetric monoidal subcategory of cofibrant objects. Then the following statements hold:
\begin{enumerate}
	\item The map $\cat{C}^{\t}_{c}\to\cat{C}^{\t}$ is a local equivalence
of $\infty$-operads.
\item The operadic localization $L\pr{\cat{C}^{\t}}$ of $\cat{C}^{\t}$
at weak equivalences is a symmetric monoidal $\infty$-category.
\item The operadic localization $\cat{C}^{\t}\to L\pr{\cat{C}^{\t}}$
is a universal localization.
\end{enumerate}
\end{thm}

\begin{proof}
	We start with (1). Write $\cat{W}\subset \cat{C}$ for the subcategory
	of weak equivalences, and set $\cat{W}_c=\cat{C}_c\cap \cat{W}$. By Theorem \ref{thm:MG} (and by enlarging the universe if necessary), it suffices to show
that the map
\[
	\dN\pr{\cat{C}^{\t}_{c},\cat{W}_c} \longrightarrow\dN\pr{\cat{C}^{\t},\cat{W}}
\]
is an equivalence of dendroidal spaces. In other words, it suffices
to show that for each tree $T$, the inclusion
\[
	\iota\from\Alg_{T}\pr{\cat{C}_{c}}\!\,^{\cat{W}_c}\xhookrightarrow{\quad}\Alg_{T}\pr{\cat{C}}\!\,^{\cat{W}}
\]
is a weak homotopy equivalence. For notational convenience, we will
prove this in the case where $T$ is a corolla; the general case can
be treated in a similar way, by decomposing trees into corollas (see \cite[Proposition 4.3.4]{hinich_equivalence_2024}).

Suppose that $T=C_{n}$. We will show that $\iota$ is a weak homotopy
equivalence by showing that $\abs{\iota}$ admits a homotopy inverse.
By Corollary \ref{cor:C_n_pb}, the $\infty$-category $\Alg_{T}\pr{\cat{C}}^{\cat{W}}$ is equivalent to the pullback of the cospan
\[
	\cat{C}^{\times n} \xlongrightarrow{\,\;\bigotimes_{i=1}^n\;\,} \cat{C}\xlongleftarrow[\phantom{\,\;\bigotimes_{i=1}^n \;\,}]{\ev_1}\Fun([1],\cat{C}).
\]
Thus, its objects can be identified with tuples of the form $\pr{x_{1},\dots,x_{n},y,f}$,
where $x_{i}$ and $y$ are objects of $\cat{C}$ and $f$ is a
morphism $x_{1}\otimes\cdots\otimes x_{n}\to y$. Via this description, morphisms $\pr{x_{1},\dots,x_{n},y,f}\to\pr{x'_{1},\dots,x'_{n},y',f'}$
are given by weak equivalences $\{a_{i}\from x_{i}\xrightarrow{\sim}x'_{i}\}_{1\leq i\leq n}$
and $b\from y\xrightarrow{\sim}y'$ rendering the diagram 
\[\begin{tikzcd}
	{x_1\otimes \cdots\otimes x_n} & y \\
	{x'_1\otimes \cdots\otimes x'_n} & {y'}
	\arrow["f", from=1-1, to=1-2]
	\arrow["{a_1\otimes \cdots \otimes a_n}"', from=1-1, to=2-1]
	\arrow["b", from=1-2, to=2-2]
	\arrow["{f'}"', from=2-1, to=2-2]
\end{tikzcd}\]commutative. The functor $\iota$ is the inclusion of the full subcategory
spanned by the objects $\pr{x_{1},\dots,x_{n},y,f}$ such that $x_{i}$
and $y$ are cofibrant. With this in mind, we define a functor $\rho\from\Alg_{T}\pr{\cat{C}}\!\,^{\cat{W}}\to\Alg_{T}\pr{\cat{C}_{c}}\!\,^{\cat{W}_c}$
as follows: Given an object $\pr{x_{1},\dots,x_{n},y,f}$, we functorially
replace each $x_{i}$ by a cofibrant object $x^{c}_{i}$, and then
functorially factor the map $x^{c}_{1}\otimes\cdots\otimes x^{c}_{n}\to y$
as a cofibration followed by a weak equivalence. The functors $\rho$
and $\iota$ become inverse equivalences upon geometric realization,
so this shows that $\iota$ is a weak homotopy equivalence, as
desired.

Next, part (2) is immediate from part (1) and Remark \ref{rem:SMloc}.

Finally, to prove part (3), we use Remark \ref{rem:complete} and
Proposition \ref{prop:univ_loc}. According to these results and the proof of (1), it is enough to show that $\dN\pr{\cat{C}^{\t}_{c},\cat{W}_c}$
is a dendroidal Segal space.
According to the dual of Lemma \ref{lem:dendroidal Rezk nerve is Segal}
(see the paragraph preceding Proposition \ref{prop:dual corolla-wise_criterion II}) and Corollary \ref{cor:C_n_pb}, it suffices to show that for every finite
(possibly empty) collection of weak equivalences of cofibrant objects
$\{x_{i}\xrightarrow{\sim}y_{i}\}_{i}$, the induced map
\[
	\pr{\cat{C}_{c}}^{\cat{W}_c}_{\bigotimes_{i}y_{i}/}\longrightarrow\pr{\cat{C}_c}^{\cat{W}_c}_{\bigotimes_{i}x_{i}/}
\]
is a weak homotopy equivalence. For this, let $\gamma\from \cat{C}\to \cat{C}[\cat{W}^{-1}]$ denote the (categorical, not operadic) localization of $\cat{C}$. 
By \cite[Corollary 7.6.9 and Corollary 7.6.13]{cisinski_higher_2019}, we can identify $\abs{\pr{\cat{C}_{c}}^{\cat{W}_c}_{\bigotimes_{i}x_{i}/}}$ with the core of $\cat{C}[\cat{W}^{-1}]_{\gamma(\bigotimes_{i}x_i)/}$, and a similar identification exists for the $y_i$'s.
The claim then follows from the fact that the map $\gamma(\bigotimes_{i}x_i)\to \gamma(\bigotimes_{i}y_i)$ is an equivalence.
\end{proof}

\begin{rem}
In \cite[Theorem A.7 (4)]{nikolaus_topological_2018}, Nikolaus and Scholze
proved a slightly stronger claim using Lurie's $\infty$-operads (and only for symmetric monoidal model categories).
Our argument does not give this stronger claim, but this is not a defect, as the corresponding
claim only makes sense in the language of Lurie's $\infty$-operads.
To the authors' knowledge, the stronger result is only used to show
that the functor $U\pr{\cat{C}^{\t}}\to U\pr{L\pr{\cat{C}^{\t}}}$
is a localization, and this follows from part (1) or part (3) of our
Theorem \ref{thm:NS} anyway.
\end{rem}

\begin{rem}
In the proof of Theorem \ref{thm:NS}, we saw that for any  symmetric monoidal model category $\mathbf{V}$, its relative dendroidal Rezk nerve $\dN\pr{\mathbf{V},\weq}$
is a complete dendroidal Segal space. This can be seen as a dendroidal analog 
of the corresponding results for the relative Rezk nerve of model categories, recorded in \cite[Theorem 8.3]{rezk_model_2001} and \cite{bergner_complete_2009}.
\end{rem}

\section{Application II: Calculus of fractions}
\label{sec:CLFs}
In their influential work \cite{gabriel_calculus_1967}, Gabriel and Zisman introduced
a particularly well-behaved class of relative categories. These relative
categories are said to admit calculus of left or right fractions,
and their localizations can be described very explicitly. Motivated by applications in mathematical physics, an operadic extension of calculus of (left) fractions was developed in \cite[Appendix A]{benini_equivalence_2026}. However, their argument is not quite conceptual, and relies on a specific model of $\infty$-operads. 

In this section, we revisit operadic calculus of fractions from the perspective of Theorem \ref{thm:MG}. As we will see, this point of view allows for a much shorter and more synthetic proof of both left and right calculus of fractions.

We start with the definition of calculus of fractions. We call \emph{relative operad} a relative $\infty$-operad whose underlying $\infty$-operad is an ordinary operad.
\begin{defn}
\label{def:calc}
A relative operad $\pr{\cat O,\cat W}$ is said to
admit an \emph{operadic calculus of left fractions} if it satisfies
the following conditions:

\begin{enumerate}[label=(CL\arabic*)]

\item $\cat W$ has the $2$-out-of-$3$ property.\footnote{This assumption can be relaxed, but we will stick to this stronger version because it simplifies some aspects in the sequel, and since it can always be enforced.}

\item For every multimorphism $\mu\from\pr{x_{i}}_{i\in I}\to y$ in $\cat O$ and
	weak equivalences $\{w_{i}\from x_{i}\xrightarrow{\sim} x'_{i}\}_{i\in I}\subseteq \cat W$, there
is a multimorphism $\mu' \from\pr{x'_{i}}_{i\in I}\to y'$ and a
weak equivalence $w:y\xrightarrow{\sim} y'$ rendering the diagram
\[\begin{tikzcd}
	{(x_i)_{i\in I}} &[6mm] {(x'_{i})_{i\in I}} \\
	y & {y'}
	\arrow["{(w_i)_{i\in I}}", from=1-1, to=1-2]
	\arrow["\mu"', from=1-1, to=2-1]
	\arrow["{\mu'}", dashed, from=1-2, to=2-2]
	\arrow["w"', dashed, from=2-1, to=2-2]
\end{tikzcd}\]commutative.

\item For every finite collection $\{w_{i}\from x'_{i}\xrightarrow{\sim} x{}_{i}\}_{i\in I}\subseteq \cat W$
of weak equivalences and every pair of multimorphisms $\mu,\mu'\from\pr{x_{i}}_{i\in I}\rightrightarrows y$ in $\cat O$
such that $\mu\circ\pr{w_{i}}_{i\in I}=\mu'\circ\pr{w_{i}}_{i\in I}$,
there is a weak equivalence $w\from y\xrightarrow{\sim} y'$ such that $w\circ\mu=w\circ\mu'$:
\[
\begin{tikzcd}
\pr{x'_{i}}_{i\in I} \ar[r,"\pr{w_{i}}_{i\in I}"] &[2mm] \pr{x_{i}}_{i\in I} \ar[r, shift left=1.2,"\mu"] \ar[r, shift right=1.2,"\mu'"'] &[2mm] y \ar[r, dashed,"w"] &[2mm] y'
\end{tikzcd}.
\]

\end{enumerate}

There is also a ``dual'' notion of right fractions. More precisely,
we say that $\pr{\cat O,\cat W}$ admits an \emph{operadic  calculus of right
 fractions} if it satisfies the following conditions:

\begin{enumerate}[label=(CR\arabic*)]

\item $\cat W$ has the $2$-out-of-$3$ property.

\item For every multimorphism $\mu\from\pr{x_{i}}_{i\in I}\to y$ in $\cat O$ and
	every weak equivalence $w\from y'\xrightarrow{\sim} y$, there is a multimorphism $\mu'\from\pr{x'_{i}}_{i\in I}\to y'$
	and weak equivalences $\{w_{i}\from x'_{i}\xrightarrow{\sim} x{}_{i}\}_{i\in I}$
rendering the diagram
\[\begin{tikzcd}
	{(x'_i)_{i\in I}} &[6mm] {(x_{i})_{i\in I}} \\
	{y'} & y
	\arrow["{(w_i)_{i\in I}}", dashed, from=1-1, to=1-2]
	\arrow["{\mu'}"', dashed, from=1-1, to=2-1]
	\arrow["\mu", from=1-2, to=2-2]
	\arrow["w"', from=2-1, to=2-2]
\end{tikzcd}\]commutative.

\item For every pair of multimorphisms $\mu,\mu'\from\pr{x_{i}}_{i\in I}\rightrightarrows y$ in $\cat O$
	and every weak equivalence $w:y\xrightarrow{\sim} y'$ such that $w\circ\mu=w\circ\mu'$,
	there are weak equivalences $\{w_{i}\from x'_{i}\xrightarrow{\sim} x{}_{i}\}_{i\in I}$
such that $\mu\circ\pr{w_{i}}_{i\in I}=\mu'\circ\pr{w_{i}}_{i\in I}$:
\[
\begin{tikzcd}
	\pr{x'_{i}}_{i\in I} \ar[r, dashed, "\pr{w_{i}}_{i\in I}"] &[2mm] \pr{x_{i}}_{i\in I} \ar[r, shift left=1.2,"\mu"] \ar[r, shift right=1.2,"\mu'"'] &[2mm] y \ar[r, "w"] &[2mm] y'
\end{tikzcd}.
\]

\end{enumerate}
\end{defn}

In the presence of calculus of fractions, we can construct a new operad whose morphisms are ``fractions'':
\begin{defn}\label{defn: Operad of left fractions}
Let $\pr{\cat O,\cat W}$ be a relative operad admitting an operadic
calculus of left fractions. Let $\cat W^{-1}\cat O$ be the (ordinary) operad defined as follows:
\begin{enumerate}
\item The objects of $\cat W^{-1}\cat O$ are the objects of $\cat O$.
\item Given a finite collection of objects $\pr{x_{i}}_{i\in I}$ and another
object $y$, the hom-set is defined as the colimit (of sets)
\[
\cat W^{-1}\cat O\binom{\pr{x_{i}}_{i\in I}}{y}=\underset{y'\in\cat W_{y/}}{\colim}\,\cat O\binom{\pr{x_{i}}_{i\in I}}{y'}.
\]
Thus, an operation $\pr{x_{i}}_{i\in I}\to y$ is an equivalence class
of cospans
\[
\pr{x_{i}}_{i\in I}\xrightarrow{\;\;\phantom{\sim}\;\;} y'\xleftarrow{\;\;\sim\;\;}y
\]
whose right-hand arrow is a weak equivalence.
\item Composition is defined as in the case of ordinary calculus of left fractions \cite[2.3]{gabriel_calculus_1967}.
\end{enumerate}
Similarly, if $\pr{\cat O,\cat W}$ is a relative operad admitting
an operadic calculus of right fractions, one can define a new operad
$\cat O\cat W^{-1}$.

\end{defn}

It is immediate from the definitions that $\cat{W}^{-1}\cat{O}$ enjoys the universal property of localization in the world of ordinary operads. 
The main result of this section says that, it is in fact a localization in the world of $\infty$-operads:

\begin{thm}
\label{thm:lcalc}Let $\pr{\cat O,\cat W}$ be a relative operad admitting
a calculus of left fractions. The map $\cat O\to\cat W^{-1}\cat O$
exhibits $\cat W^{-1}\cat O$ as the ($\infty$-operadic) localization
of $\cat O$ at $\cat W$. A similar claim holds for the calculus
of right fractions, too.
\end{thm}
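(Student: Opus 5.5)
We will deduce the theorem from Proposition \ref{prop:corolla-wise_criterion II} applied to $f\colon\cat O\to\cat P:=\cat W^{-1}\cat O$ (for left fractions; the right case is dual and uses Proposition \ref{prop:dual corolla-wise_criterion II} instead). The first step is to check that the wide subcategory generated by $f^{-1}(U\cat P^{\simeq})$ is exactly $\cat W$. A unary fraction $x\to y'\xleftarrow{\sim}y$ is invertible in $\cat W^{-1}\cat O$ precisely when the forward map becomes invertible. By the classical Gabriel--Zisman argument together with (CL1) and (CL3), a map of $\cat O$ whose image is invertible lies in $\cat W$. This is the usual saturation statement for $2$-out-of-$3$ categories with left fractions, and it involves only the underlying relative category.

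Next we verify condition (1), namely that $\cat W\to U\cat P^{\simeq}$ is a weak homotopy equivalence. Since $\cat P$ is an ordinary operad, $U\cat P^{\simeq}$ is a $1$-groupoid. We will use the classical fact that, for a category with left calculus of fractions, $|\cat W|$ is computed by the fraction groupoid $\cat W^{-1}\cat W$, which is a $1$-groupoid. We then identify $\cat W^{-1}\cat W$ with $U(\cat W^{-1}\cat O)^{\simeq}$ using saturation. The clean way to do this is Quillen's Theorem A on the functor $\cat W\to U\cat P^{\simeq}$: its comma category at $z$ is the category of pairs $(x, [x\to y'\xleftarrow{\sim} z])$. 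We will show this category is filtered, and hence contractible, using (CL2) and (CL3) in arity one.

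The main work, and the expected obstacle, is condition (2). For each $y$ we need
\[
\Alg_{C_n}(\cat O)^{\cat W}_y\longrightarrow \Alg_{C_n}(\cat P)^{\simeq}_{y}
\]
to be a weak homotopy equivalence. The fibers are over the root for the cocartesian fibration $\ev_{\mathrm{root}}$, so fixing the root means restricting to transformations that are the identity there. The source has as objects tuples $(x_1,\dots,x_n,\mu\colon(x_i)\to y)$, with morphisms given by weak equivalences $x_i\to x_i'$ that commute with $\mu$. The target is the discrete-up-to-isomorphism groupoid of tuples $(x_i)$ together with a fraction $(x_i)\to y'\xleftarrow{\sim} y$.

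Rather than working at fixed $y$ directly, we can instead apply Theorem A to the total functor $\Alg_{C_n}(\cat O)^{\cat W}\to\Alg_{C_n}(\cat P)^{\simeq}$ and then combine with condition (1) via Lemma \ref{lem:Quillen B}. Equivalently, we can work in the situation of Proposition \ref{prop:corolla-wise_criterion I}, using the pair $(\cat P^\flat)$ which is Segal. For the total functor, the comma category over a point $(z_i, [\nu\colon(z_i)\to y'\xleftarrow{\sim}z])$ consists of operations $\mu\colon(x_i)\to y$ in $\cat O$, together with isomorphisms in $\cat P$ from their image to the given point. Using saturation, each such isomorphism is represented by weak-equivalence fractions. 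We aim to show that this comma category is cofiltered or filtered, because any cofiltered or filtered category is weakly contractible, which gives the weak equivalence.

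The key inputs for filteredness are the following. Nonemptiness comes from the representative $\nu$ itself, using (CL2) to move the inputs. Cones over pairs of objects come from (CL2) applied to the input weak equivalences, which pushes operations forward along the $w_i$. Equalization of parallel arrows comes from (CL3). Checking that the equivalence-class bookkeeping of fractions is compatible with these constructions is the tedious part. If filteredness fails on the nose, we will instead compute both sides as colimits over $\cat W_{y/}$ and use that filtered colimits commute with geometric realization. The right fractions case follows by the same argument applied to the leaf fibration and Proposition \ref{prop:dual corolla-wise_criterion II}, with (CR2) and (CR3) playing the roles of (CL2) and (CL3).
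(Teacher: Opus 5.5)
There is a genuine gap: you have the two fibrations swapped. For a calculus of \emph{left} fractions, condition (2) of Proposition \ref{prop:corolla-wise_criterion II}, which concerns the fibers of $\ev_{\mathrm{root}}$, is false in general.

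Take the category (viewed as an operad with only unary operations) with objects $x,y,y'$, non-identity maps $g\colon x\to y'$ and $u\colon y\to y'$, and $\cat W=\{u\}$ plus identities. All hom-sets have at most one element, and the only non-trivial Ore problems ($u$ against $\mathrm{id}_y$ or against $u$) are solved trivially. So (CL1)--(CL3) hold, $\cat W$ is saturated, and $\cat W^{-1}\cat O\simeq[1]$ contains the fraction $x\xrightarrow{g}y'\xleftarrow{u}y$.

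However, $\Alg_{C_1}(\cat O)^{\cat W}_y$ is a point, since only $\mathrm{id}_y$ maps into $y$. On the other hand, $\Alg_{C_1}(\cat W^{-1}\cat O)^{\simeq}_y$ has two components, because $x$ is not isomorphic to $y$. Your Theorem A comma category at fixed $y$ over $(x,u^{-1}g)$ is therefore empty.

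Your fallbacks do not rescue this. Proposition \ref{prop:corolla-wise_criterion I} contains the same root-fiber condition (2). The ``total functor'' variant additionally needs $\dN(\cat O,\cat W)$ to be Segal. Neither the root version of Lemma \ref{lem:dendroidal Rezk nerve is Segal} nor Lemma \ref{lem:Quillen B} can supply this, because $\ev_{\mathrm{root}}$ is not locally constant here: $u_!$ goes from a point to two components. The underlying reason is that left fractions only let you move the \emph{output} forward along $\cat W$, so fixing the output loses fractions.

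The fix is to fix the inputs instead, which is what the paper does by applying Proposition \ref{prop:dual corolla-wise_criterion II}. The fiber of $\ev_{\mathrm{leaves}}$ over $(x_1,\dots,x_n)$ is the category of operations out of $(x_i)$, with morphisms given by unary maps at the root. This fiber inherits a calculus of left fractions from $U\cat O$: complete the Ore square at the root and transport the operation along it. The hom-set formula $\colim_{y'\in\cat W_{y/}}$ then identifies the corresponding fiber of $\cat W^{-1}\cat O$ with its fraction groupoid; this is exactly where your colimit-over-$\cat W_{y/}$ fallback belongs. Lemma \ref{lem:lcalc} then gives conditions (1) and (2) at once, with no Theorem A bookkeeping.

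Dually, for right fractions the root fibers (operations into $y$, with morphisms on the inputs) inherit the calculus, so Proposition \ref{prop:corolla-wise_criterion II} is the correct criterion in that case.

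A smaller point: your saturation claim is false with only 2-out-of-3. For the split idempotent $e=ir$ with $ri=\mathrm{id}$ and $\cat W=\{\mathrm{id},e\}$, the map $i$ becomes invertible but is not in $\cat W$. This is harmless, since the criterion is formulated for the saturated class, and the saturation of a left multiplicative system is again one with the same fraction operad.
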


The proof of Theorem \ref{thm:lcalc} relies on the evaluation-at-leaves criterion developed in Section \ref{sec:criteria} and the following well-known special case of relative categories. This latter appears in \cite[Proposition 7.3]{dwyer_calculating_1980}, but for the reader's convenience,
we record an alternative proof here.
\begin{lem}
\label{lem:lcalc}Let $\pr{\cat C,\cat W}$ be a relative category
admitting a calculus of left fractions (in the sense of Definition
\ref{def:calc}). Then the functors $\cat C\to\cat W^{-1}\cat C$
and $\cat W\to\pr{\cat W^{-1}\cat C}\!\,^{\simeq}$ are  ($\infty$-categorical)
localizations.
\end{lem}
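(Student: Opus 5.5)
We would prove the lemma by applying the categorical specialization of the machinery of Section \ref{sec:criteria}. A relative category is a relative $\infty$-operad with only unary operations, so everything there applies with trees replaced by linear trees $[n]$. Concretely, we apply Proposition \ref{prop:dual corolla-wise_criterion II}, the evaluation-at-leaves version, to the functor $f\colon \cat C\to \cat W^{-1}\cat C$.

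The first task is to identify $\cat W$ with the subcategory generated by $f^{-1}\bigl((\cat W^{-1}\cat C)^{\simeq}\bigr)$. Because $\cat W$ satisfies $2$-out-of-$3$, the classical Gabriel--Zisman argument shows that a map becoming invertible in $\cat W^{-1}\cat C$ is ``weakly saturated''. If the hypotheses do not give full saturation, we instead run the argument with the relative version, Proposition \ref{prop:corolla-wise_criterion I}, applied to $(\cat C,\cat W)\to (\cat W^{-1}\cat C)^{\flat}$, which avoids this issue.

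The key computation is the fiber condition. In the categorical case, only the corollas $C_0=\eta$ and $C_1=[1]$ matter. For $C_1$ the fiber of $\ev_{\mathrm{leaves}}$ over $x$ is the category $\cat C^{\cat W}_{x/}$: its objects are maps $x\to y$, and its morphisms are weak equivalences $y\to y'$ under $x$. The target fiber is $(\cat W^{-1}\cat C)_{x/}^{\simeq}$, which is a discrete set. We must therefore show that the functor
\[
\cat C^{\cat W}_{x/}\longrightarrow \pi_0\bigl((\cat W^{-1}\cat C)^{\simeq}_{x/}\bigr)
\]
is a weak homotopy equivalence. By (CL2) and (CL3), each connected component of $\cat C^{\cat W}_{x/}$ is filtered: (CL2) with the identity supplies cocones on pairs of objects, and (CL3) equalizes parallel arrows. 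A filtered category is weakly contractible, so each component is contractible.

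It then remains to match components with fractions. Every fraction $x\to y'\xleftarrow{\sim} y$ is determined by the class of $x\to y'$ in a component; conversely, two arrows $x\to y_1$ and $x\to y_2$ related by a zigzag of weak equivalences under $x$ map to isomorphic objects. Using the explicit colimit description of the hom-sets in Definition \ref{defn: Operad of left fractions}, the induced map on $\pi_0$ is a bijection. The case $C_0$, that is condition (1), asks that $\cat W\to (\cat W^{-1}\cat C)^{\simeq}$ be a weak equivalence. This follows by the same argument applied to $\cat W_{x/}$: it is filtered, hence contractible, and Quillen's Theorem A (or Lemma \ref{lem:Quillen B}) then finishes.

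The same argument also yields the second claim of the lemma. Since $\dN(\cat C,\cat W)$ is complete Segal by Lemma \ref{lem:dendroidal Rezk nerve is Segal} and Remark \ref{rem:complete}, Proposition \ref{prop:univ_loc} applies. Pulling back along $(\cat W^{-1}\cat C)^{\simeq}\hookrightarrow \cat W^{-1}\cat C$ then shows that $\cat W\to(\cat W^{-1}\cat C)^{\simeq}$ is a localization.

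The main obstacle is the filteredness and $\pi_0$ identification: checking that (CL2) and (CL3) give filtered components in $\cat C^{\cat W}_{x/}$, and that the condition on a category of morphisms in the \emph{target} is what the criterion actually requires. Here we would first try to reduce to the classical statement that $\colim_{\cat W_{y/}}$ of sets is homotopy-invariant over filtered diagrams.
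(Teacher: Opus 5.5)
Your route differs from the paper's, and it has a genuine gap. The paper does not use the criteria of Section~\ref{sec:criteria} here; it feeds this lemma \emph{into} Proposition~\ref{prop:dual corolla-wise_criterion II} to prove Theorem~\ref{thm:lcalc}. Its argument is that $\cat W_{Y/}$ is filtered, so the set-level colimit $\colim_{\cat W_{Y/}}\cat C(X,-)$ is already the colimit in $\cat S$. The derived mapping space lemma then identifies this colimit with $\Map_{\cat C[\cat W^{-1}]}(X,Y)$.

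Running the criterion in the other direction is not circular. However, the step that is supposed to carry your argument is false. Components of $\cat C^{\cat W}_{x/}$ need not be filtered, and the target fiber $(\cat W^{-1}\cat C)^{\simeq}_{x/}$ is not discrete: the automorphisms of $g\colon x\to y$ there are the $h\in\mathrm{Aut}(y)$ with $hg=g$. Axiom (CL3) only coequalizes a parallel pair that is already equalized after precomposition with a \emph{weak equivalence}. A parallel pair $w_1,w_2\colon y\rightrightarrows y'$ in $\cat C^{\cat W}_{x/}$ is only equalized by the arbitrary map $g$.

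For instance, let $\cat W$ be the isomorphisms, and let $\cat C$ have objects $x,y$, a single map $g\colon x\to y$, and $\mathrm{Aut}(y)=\mathbb Z/2$. On both sides the component of $g$ is $B(\mathbb Z/2)$.

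What is true is that $\cat C^{\cat W}_{x/}$, with all its arrows declared weak equivalences, again admits a calculus of left fractions, so its realization is a $1$-groupoid. But that is itself an instance of the lemma, so the criterion does not lower the difficulty. The filtered-colimit observation in your last sentence is only half of the paper's proof; the missing half is the derived mapping space lemma. (Minor: $C_0$ is the stump, not $\eta$. For a category $\Alg_{C_0}$ is empty, and the $\eta$-condition is condition (1).)

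Second, almost every remaining step needs $\cat W$ to be \emph{saturated}, which $2$-out-of-$3$ does not give.
\begin{itemize}
\item Proposition~\ref{prop:dual corolla-wise_criterion II} needs $\cat W$ to be generated by the maps inverted in $\cat W^{-1}\cat C$.
\item Your fallback, Proposition~\ref{prop:corolla-wise_criterion I}, needs $\abs{\cat W}\simeq(\cat W^{-1}\cat C)^{\simeq}$. The comma categories in the Quillen~A argument for this consist of isomorphisms $x\cong y$ in $\cat W^{-1}\cat C$, not of objects of $\cat W_{x/}$.
\item Injectivity of your $\pi_0$-comparison needs isomorphic objects of $(\cat W^{-1}\cat C)_{x/}$ to be joined by weak equivalences.
\item In your last paragraph, Remark~\ref{rem:complete} gives completeness only for saturated $\cat W$. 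Also, the pullback of $\cat C\to\cat W^{-1}\cat C$ along the core is the subcategory of inverted maps, not $\cat W$.
\end{itemize}

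Here is a counterexample: the walking split idempotent ($ca=\id_y$, $e=ac$) with $\cat W=\{\id_y,\id_z,e\}$. It satisfies (CL1)--(CL3). Both $\cat W^{-1}\cat C$ and $\cat C[\cat W^{-1}]$ are the contractible groupoid on $\{y,z\}$, consistent with the first assertion. But $\abs{\cat W}$ has two components, so all of the above steps fail, and in fact $\cat W\to(\cat W^{-1}\cat C)^{\simeq}$ is not a localization.

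Thus the second assertion genuinely requires saturation; the paper's identification $(\cat W^{-1}\cat C)^{\simeq}\cong\cat W^{-1}\cat W$ tacitly assumes it as well. The paper's mapping-space argument for the first assertion needs no such hypothesis.
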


\begin{proof}
The claim for the second functor is a special case of the first one,
because we can identify $\pr{\cat W^{-1}\cat C}\!\,^{\simeq}$ with $\cat W^{-1}\cat W$.
(This needs the $2$-out-of-$3$ property of $\cat W$.) It will therefore
suffice to prove the first claim. 

Let $X,Y\in\cat C$ be a pair of objects. The definition of calculus
of fractions ensures that the category $\cat W_{Y/}$ is filtered.
Since weak homotopy equivalences of simplicial sets are stable under
filtered colimits, it follows that the defining colimit
\[
\cat W^{-1}\cat C\pr{X,Y}=\underset{Y'\in\cat W_{Y/}}{\colim}\,\cat C\pr{X,Y'}
\]
of the hom-set of $\cat W^{-1}\cat C$ computes the colimit in $\cat S$.
On the other hand, the derived mapping space lemma \cite[Theorem 2.2]{arakawa_derived_2025}
asserts that the above colimit is the mapping space of the localization.
Hence, the functor $\cat C[\cat W^{-1}]\to\cat W^{-1}\cat C$ is an
equivalence, as claimed.
\end{proof}

\begin{proof}
[Proof of Theorem \ref{thm:lcalc}] We will prove the claim for the calculus of left fractions; the case for the calculus of right fractions can be
treated similarly. According to Proposition \ref{prop:dual corolla-wise_criterion II}, it suffices to prove the following:
\begin{enumerate}
\item The map $\cat W\to U\pr{\cat W^{-1}\cat O}\!\,^{\simeq}$ is a weak homotopy
equivalence.
\item For each $n\geq0$ and objects $x_{1},\dots,x_{n}\in\cat O$, the
functor
\[
\Alg_{C_{n}}\pr{\cat O}^{\cat W}_{\pr{x_{1,}\dots,x_{n}}}\to\Alg_{C_{n}}\pr{\cat W^{-1}\cat O}^{\simeq}_{\pr{x_{1},\dots,x_{n}}}
\]
is a weak homotopy equivalence.
\end{enumerate}
Here, we are denoting by $\Alg_{C_{n}}\pr{\cat O}^{\cat W}_{\pr{x_{1,}\dots,x_{n}}}$ 
the fiber of the cartesian fibration $\ev_{\mathrm{leaves}}\colon \Alg_{C_{n}}\pr{\cat O}\!\,^{\cat W}\to\cat W^{\times n}$
over $\pr{x_{1},\dots,x_{n}}$, given by the evaluation at the leaves
of $C_{n}$. Both of these claims are special cases of Lemma \ref{lem:lcalc},
because $U\cat O$ and $\Alg_{C_{n}}\pr{\cat O}_{\pr{x_{1},\dots,x_{n}}}$
admit calculus of left fractions, with weak equivalences given by
those maps whose images in $U\cat O$ are weak equivalences.
\end{proof}

\begin{example}\label{exa:reflective}
	Any reflective localization
	\[
	\begin{tikzcd}
		\ell\from \cat O \ar[r, shift left=1.8]\ar[r,hookleftarrow, shift right=1.8,"\scalebox{0.8}{$\perp$}"] & \cat P\from \iota
	\end{tikzcd}
	\]
   in $\mathrm{Op}$ (see Proposition \ref{prop:reflective_localization}) yields a relative operad $(\cat O,\cat W)$ which admits an operadic calculus of left fractions, where $\cat W=\ell^{-1}(U\cat P)^{\simeq} $. Dually, any coreflective localization $\iota\dashv r$ in $\mathrm{Op}$ yields an example of operadic calculus of right fractions.
\end{example}

The following is a particular instance of Example \ref{exa:reflective} which is related to the material in Section \ref{sec:modelcats}.

\begin{example} Let $R$ be a (unital) commutative ring. Denote by $\mathrm{Mod}_R$ the associated abelian category of $R$-modules and by $\mathrm{Proj}_R$ its full subcategory of projective $R$-modules. Let us also denote by $\mathrm{K}_+(R)=\mathrm{K}_+(\mathrm{Mod}_R)$ and $\mathrm{D}_+(R)$ the associated homotopy and derived categories of $R$ consisting of chain complexes $X_\bullet$ such that $X_{\ll 0}=0$. Recall that the derived category $\mathrm{D}_+(R)$ was first constructed by Verdier as a categorical localization of $(\mathrm{K}_+(R),\mathrm{qis})$, where $\mathrm{qis}\subset \mathrm{K}_+(R)$ denotes the wide subcategory spanned by quasi-isomorphisms. Technically, back in Verdier's days, it was important to localize the homotopy category and not the abelian category of chain complexes over $R$, since the pair $(\mathrm{K}_+(R),\mathrm{qis})$ admits a categorical calculus of right fractions, while this is not the case for $(\mathrm{Ch}_+(\mathrm{Mod}_R),\mathrm{qis})$.\footnote{A modern point of view clearly explains why: If we have already inverted the homotopy equivalences in a $1$-categorical fashion, inverting quasi-isomorphisms will not create higher-categorical information. In contrast, inverting quasi-isomorphisms without this truncation step produces a stable $\infty$-category with non-trivial higher-categorical information.} 
In fact, we can justify this admissibility by looking at the categorical localization $\mathrm{K}_+(R)\to \mathrm{D}_+(R)$ as a coreflective localization
	\[
\begin{tikzcd}
	\iota\from \mathrm{K}_+(\mathrm{Proj}_R)  \ar[r, shift right=1.8,"\scalebox{0.8}{$\perp$}", leftarrow] & \ar[l, hookleftarrow, shift right=1.8] \mathrm{K}_+(R)\from r
\end{tikzcd},
    \]
   since $r$ induces an equivalence of categories $\mathrm{K}_+(\mathrm{Proj}_R)\simeq \mathrm{D}_+(R)$. The reason behind this last equivalence is that any quasi-isomorphism between objects in $\mathrm{K}_+(\mathrm{Proj}_R)$ is a homotopy equivalence.
   
   To transition into the operadic world, equip both categories with the tensor product $\otimes_R$. We then have a coreflective localization 
   $$
   \begin{tikzcd}
   	\iota\from \mathrm{K}_+(\mathrm{Proj}_R)^{\otimes_R}  \ar[r, shift right=1.8,"\scalebox{0.8}{$\perp$}", leftarrow] & \ar[l, hookleftarrow, shift right=1.8] \mathrm{K}_+(R)^{\otimes_R}\from r
   \end{tikzcd}
   $$
   in $\mathrm{Op}$. This implies that $r\colon \mathrm{K}_+(R)^{\otimes_R}\to \mathrm{K}_+(\mathrm{Proj}_R)^{\otimes_R}$ exhibit the operadic ($\infty$-)localization of $\mathrm{K}_+(R)^{\otimes_R}$ at $r^{-1}(\mathrm{K}_+(R)^{\simeq})=\mathrm{qis}$, and that $(\mathrm{K}_+(R)^{\otimes_R},\mathrm{qis})$ admits an operadic calculus of right fractions. This provides a presentation of the operad underlying the symmetric monoidal category given by $\mathrm{D}_+(R)$ with its derived tensor product $\otimes_R^{\mathbb{L}}$ in exactly the same terms as Verdier's original construction of the derived category $\mathrm{D}_+(R)$ from $\mathrm{K}_+(R)$. 
\end{example}

The next section contains instances of operadic calculus of left fractions not arising from reflective localizations in $\mathrm{Op}$. See Corollary \ref{cor: CLFs for Lorentzian pFAs} and Remark \ref{rem: more on CLFs for Lorentzian operads}.

\section{Application III: Prefactorization algebras}
\label{sec:prefact}
Prefactorization algebras are a very general class of algebraic objects of geometric origin. Their importance stems from their interesting applications in a variety of contexts, ranging from mathematical physics, algebraic and differential topology, to representation theory (see, e.g., \cite{benini_c-categorical_2026,costello_factorization_2017,costello_factorization_2021,karlsson_assembly_2026}). A useful formalization comes from the notion of \textit{orthogonal categories}. An orthogonal category is given by a pair $\mathrm{C}^{\perp}=(\mathrm{C},\perp)$ where $\mathrm{C}$ is an ordinary (small) category and $\perp$ is a suitable binary relation on $\mathrm{C}_1\times_{t,\mathrm{C}_0,t}\mathrm{C}_1$, i.e., the collection of arrows in $\mathrm{C}$ with the same target (see \cite[Definition 3.4]{benini_operads_2021} or \cite[\S 2.1]{benini_strictification_2023}). 
In fact, orthogonal categories can be organized in a 2-category $\mathrm{OrthCat}$ having as $1$-morphisms the orthogonal functors, i.e., functors respecting the binary relation, and natural transformations as $2$-morphisms (see \cite[\S 2.1]{benini_strictification_2023}). Associated with an orthogonal category $\mathrm{C}^{\perp}$ we have an operad $\cP_{\mathrm{C}}^{\perp}$, or simply $\cP_{\mathrm{C}}$ if the omission of $\perp$ causes no confusion, whose algebras are $\mathrm{C}$-\textit{prefactorization algebras} with values in $\cV$, $\mathrm{pFA}_{\mathrm{C}}(\cV):=\Alg_{\cP_{\mathrm{C}}}(\cV)$. (Actually, $\mathrm{C}^{\perp}\mapsto \cP_{\mathrm{C}}^{\perp}$ yields a $2$-functor $\mathrm{OrthCat}\to \mathrm{Op}$.)  
A detailed description of $\cP_{\mathrm{C}}$ can be found in \cite[\textsection 10.3]{yau_homotopical_2020} or \cite[Definition 2.5]{benini_categorification_2021}. Since it will be crucial later on, let us note that (1) the underlying category of $\cP_{\mathrm{C}}$ is always $\mathrm{C}$ (i.e.\ $U\cP_{\mathrm{C}}=\mathrm{C}$), and (2) if the underlying category $\mathrm{C}$ is a poset, we have:  
$$
\cP_{\mathrm{C}}\binom{U_1,\dots,U_n}{V}=\begin{cases}
	*=\{\mu_V^{U_{\bullet}}\} & \text{ if }U_i\perp_V U_j \text{ for any }i\neq j,\\[3mm]
	\diameter & \text{ else},
\end{cases}
$$
for any $U_1,\dots,U_n,V\in \mathrm{C}$. (Under restriction (2), the permutation actions and the composition maps of $\cP_{\mathrm{C}}$ are uniquely determined.)

This section is dedicated to studying different localization problems for operads of the form $\cP_\mathrm{C}$, where $\mathrm{C}^{\perp}$ is an orthogonal category with a Lorentzian or lattice flavor. 

\begin{notation}\label{notat: locally constant pFAs} Since the $\infty$-categories of algebras over the operads $\cP_{\mathrm{C}}$ will appear quite frequently in this subsection, we will opt for using the simpler notation 
	$$
	\mathrm{pFA}_{\mathrm{C}}(\cV):=\Alg_{\cP_{\mathrm{C}}}(\cV).
	$$
Furthermore, when the operad $\cP_{\mathrm{C}}$ is equipped with its maximal relative marking, $\cP_{\mathrm{C}}^{\,\sharp}=(\cP_{\mathrm{C}},\mathrm{C})$, we will denote 
the full subcategory of  $\cP_{\mathrm{C}}$-algebras which send any morphism in $\mathrm{C}$ to an equivalence by
$$
\mathrm{pFA}_{\mathrm{C}}^{\mathrm{lc}}(\cV)=\Alg_{\cP_{\mathrm{C}}}^{\mathrm{C}}(\cV).
$$
The superscript $\mathrm{lc}$ stands for \textit{locally constant}. This notation creates some conflicts with standard references such as \cite{ayala_factorization_2015,costello_factorization_2017} if applied to any orthogonal category $\mathrm{C}^{\perp}$. (In these references, local constancy is related to the behavior of prefactorization algebras with respect to isotopy equivalences between disjoint unions of open disks in an ambient smooth or topological manifold.) However, in the examples below, none of these conflicts will arise and that is why we will stick to our simple convention. The only exception is when we invoke locally constant \emph{factorization algebras} over $\mathbb{S}^{n-1}$ in Proposition \ref{prop: Second Lattice Localization}. Since they are not just ``pre-factorizing'' in that case, we hope this will cause no confusion.  
\end{notation}

\subsection{Lattice localizations} In this subsection, we focus on the comparison between prefactorization algebras arising from discrete geometries --in particular infinite lattices $\mathbb{Z}^n$-- and their continuous counterparts.

\begin{defn}\label{defn: Cone Orthogonal cats} Let $\mathrm{Cone}(\mathbb{R}^n)^{\perp}$ be the orthogonal category given by the following data:
	\begin{itemize}
		\item The underlying category $\mathrm{Cone}(\mathbb{R}^n)$ is the poset of open cones in $\mathbb{R}^n$. That is, open subsets of the form 
		\begin{align*}
			C_{(p,t,\alpha)} &= \left\{x\in \mathbb{R}^n\backslash\{p\}:\quad\mathrm{angle}(x-p,t)<\alpha \right\}\\
			&= \left\{ x\in \mathbb{R}^n: \quad (x-p)\cdot t>\vert\vert x-p\vert\vert \mathrm{cos}(\alpha)\right\}\subseteq \mathbb{R}^n \quad,
		\end{align*}
	    where $p\in \mathbb{R}^n$ is the apex of the cone, $t\in \mathbb{S}^{n-1}\subseteq \mathbb{R}^n$ is the normalized center direction, and $\alpha \in (0,\pi)$ is the opening angle. Here we denote $\cdot$ and $\vert\vert-\vert\vert$ the standard inner product and its associated norm in $\mathbb{R}^n$.\footnote{Open cones in $\mathbb{R}^n$ are faithfully parametrized by triples $(p,t,\alpha)$ if $\alpha\neq \pi/2$. When $\alpha=\pi/2$, there is a mild redundancy in the choice of apex, since those cones correspond to half-spaces.}
		
		\item The orthogonality relation is just disjointness, i.e.\ $U\perp_{V} U'$ if and only if $U\cap U'=\diameter$.

	\end{itemize}	

We will denote by $\mathrm{Cone}_{\circ}(\mathbb{R}^n)^{\perp}$ the full orthogonal subcategory of $\mathrm{Cone}(\mathbb{R}^n)^{\perp}$ whose objects are cones centered at the origin, i.e., open cones $C_{(p,t,\alpha)}$ such that $p=0$.
\end{defn}  

The orthogonal category  $\mathrm{Cone}(\mathbb{Z}^n)^{\perp}$ is defined exactly as $\mathrm{Cone}(\mathbb{R}^n)^{\perp}$ but using the subposet of parts of $\mathbb{Z}^n$ spanned by subsets $\overline{V}\subseteq \mathbb{Z}^n$ of the form $C_{(p,t,\alpha)}\cap\mathbb{Z}^n$, where $C_{(p,t,\alpha)}\subseteq \mathbb{R}^n$ is an open cone. Also, it is clear that we have an orthogonal functor $\delta_{\mathbb{Z}}\colon \mathrm{Cone}(\mathbb{R}^n)^{\perp}\to \mathrm{Cone}(\mathbb{Z}^n)^{\perp}$ induced by the rule $U\mapsto U\cap\, \mathbb{Z}^n$. Accordingly, there is an operad map between the associated prefactorization operads which we denote by the same symbol.

\begin{rem}\label{rem: Cones, fixed direction and angle} For an open cone $C_{(p,t,\alpha)}\subseteq \mathbb{R}^n$, its restriction to the lattice fixes both $t\in \mathbb{S}^{n-1}$ and $\alpha\in (0,\pi)$. That is, if there is another open cone $C_{(p',t',\alpha')}\subseteq \mathbb{R}^n$ such that $C_{(p,t,\alpha)}\cap \mathbb{Z}^n=C_{(p',t',\alpha')}\cap \mathbb{Z}^n$, then $t=t'$ and $\alpha=\alpha'$. Of course, there are also restrictions on the apex, but they are more involved (they also depend on the opening angle $\alpha\in (0,\pi)$). Since we will not need them, we omit their discussion. Also notice that any two open cones which share their direction $t$ and angle $\alpha$ have non-empty intersection (e.g., it contains the ray $\{\lambda t\in \mathbb{R}^n:\; \lambda>\lambda_0\}$ for sufficiently big $\lambda_0\gg0$).
\end{rem}

In light of the previous remark, we define $\cW_{\mathbb{R}}$ to be the wide subcategory  of inclusions in $\mathrm{Cone}(\mathbb{R}^n)$ between cones with the same normalized direction $t\in \mathbb{S}^{n-1}$ and opening angle $\alpha\in (0,\pi)$. We will use the notation $U\xhookrightarrow{\;\infty\;} U'$ to represent that an inclusion $U\subseteq U'$ belongs to $\cW_{\mathbb{R}}$. We will employ the same notation for the wide subcategory  $\cW_{\mathbb{Z}}=\delta_{\mathbb{Z}}(\cW_{\mathbb{R}})$ of $\mathrm{Cone}(\mathbb{Z}^n)$. Associated to these sets of unary maps, there are relative operads 
$$
\cP_{\mathrm{Cone}(\mathbb{R}^n)}^{\cW_{\mathbb{R}}}=\big(\cP_{\mathrm{Cone}(\mathbb{R}^n)},\cW_{\mathbb{R}}\big) \;\text{ and }\; \cP_{\mathrm{Cone}(\mathbb{Z}^n)}^{\cW_{\mathbb{Z}}}=\big(\cP_{\mathrm{Cone}(\mathbb{Z}^n)},\cW_{\mathbb{Z}}\big).
$$

\begin{cor}\label{cor: First and a half Lattice Localization}
The relative operad map $\delta_{\mathbb{Z}}\colon \cP_{\mathrm{Cone}(\mathbb{R}^n)}^{\cW_{\mathbb{R}}}\to \cP_{\mathrm{Cone}(\mathbb{Z}^n)}^{\cW_{\mathbb{Z}}}$ is a local equivalence. In particular,
we have a commutative diagram of $\infty$-categories
$$
\begin{tikzcd}
 \mathrm{pFA}_{\mathrm{Cone}(\mathbb{R}^n)}^{\cW_{\mathbb{R}}}(\cV)\simeq &[-10mm] \mathrm{pFA}_{\mathrm{Cone}(\mathbb{Z}^n)}^{\cW_{\mathbb{Z}}}(\cV)\\
 \mathrm{pFA}_{\mathrm{Cone}(\mathbb{R}^n)}^{\mathrm{lc}}(\cV) \simeq \ar[u, hookrightarrow] &  \mathrm{pFA}^{\mathrm{lc}}_{\mathrm{Cone}(\mathbb{Z}^n)}(\cV) \ar[u, hookrightarrow]
\end{tikzcd}\quad,
$$
for any symmetric monoidal $\infty$-category $\cV$.
\end{cor}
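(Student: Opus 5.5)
The plan is to apply Proposition \ref{prop:corolla-wise_criterion I} to $f=\delta_{\mathbb{Z}}$. The diagram of $\infty$-categories then follows formally. Once $\delta_{\mathbb{Z}}$ is a local equivalence, restriction along it identifies $\cW_{\mathbb{Z}}$-local algebras with $\cW_{\mathbb{R}}$-local algebras. The fully locally constant algebras are the full subcategories inverting all of $U\cP$. Since $\delta_{\mathbb{Z}}$ is surjective on underlying morphisms (by definition of $\mathrm{Cone}(\mathbb{Z}^n)$ and of the induced inclusions), local constancy matches under restriction.

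\textbf{Step 1: the underlying relative categories.} By Remark \ref{rem: Cones, fixed direction and angle}, both $\cW_{\mathbb{R}}$ and $\cW_{\mathbb{Z}}$ split as disjoint unions over pairs $(t,\alpha)\in\mathbb{S}^{n-1}\times(0,\pi)$, and $\delta_{\mathbb{Z}}$ respects this splitting. Each component is a poset. Any two cones with the same $(t,\alpha)$ intersect, and in fact both contain a common cone with the same $(t,\alpha)$ (translate the apex far along $t$). Hence each component is cofiltered, so contractible, on both sides. This gives condition (1) of Proposition \ref{prop:corolla-wise_criterion I}. The functor $\cW_{\mathbb{R}}\to\cW_{\mathbb{Z}}$ is surjective on objects, so we may aim for hypothesis (b).

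\textbf{Step 2: fibers over the root.} Since the operads are posetal, for $x=V$ the fiber $\Alg_{C_k}(\cP)^{\cW}_V$ is the poset of tuples $(U_1,\dots,U_k)$ of pairwise disjoint cones in $V$, with morphisms componentwise $\cW$-inclusions. A componentwise $\cW$-inclusion automatically preserves disjointness downward, so morphisms go from smaller tuples to larger ones inside $V$. I would show that each such fiber decomposes, indexed by $(t_i,\alpha_i)_i$, into components that are contractible, or at least weakly equivalent between $\mathbb{R}$ and $\mathbb{Z}$. The idea is that any two admissible tuples with the same $(t_i,\alpha_i)$ are dominated from below by a common one, obtained by shrinking each cone far toward infinity along $t_i$. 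The resulting cofiltered posets are weakly contractible.

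The main obstacle is that disjointness of cones far out depends on the directions: two cones with overlapping asymptotic angular regions can never be disjoint. So the set of realizable $(t_i,\alpha_i)$ is constrained, and the cofilteredness argument needs the shrunken cones to remain pairwise disjoint and inside $V$. Shrinking a cone (translating its apex along $t_i$) only decreases it, so disjointness is preserved. What must be checked is that the index set of realizable direction data is the same for $\mathbb{R}$ and $\mathbb{Z}$, so that $\delta_{\mathbb{Z}}$ induces a bijection on components. Translating far out makes the lattice intersections genuinely separate the cones, because large cones contain many lattice points. I would argue this by showing that, for cones with fixed $(t,\alpha)$, disjointness of lattice traces after sufficient translation is equivalent to disjointness of the real cones after translation.

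The same argument gives the local constancy hypothesis of Lemma \ref{lem:dendroidal Rezk nerve is Segal} for $\cP_{\mathrm{Cone}(\mathbb{R}^n)}^{\cW_{\mathbb{R}}}$. For $V\hookrightarrow V'$ in $\cW_{\mathbb{R}}$, the pushforward of fibers induces a bijection on contractible components, since the realizable direction data inside $V$ and $V'$ agree: $V$ and $V'$ share the same asymptotic cone. Hypothesis (b) is then verified, and the proposition yields the local equivalence.
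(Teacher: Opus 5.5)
Your approach works and has the same skeleton as the paper's proof: Proposition~\ref{prop:corolla-wise_criterion I} applied to $\delta_{\mathbb{Z}}$ via hypothesis (b). The difference is in how the hypotheses are checked.

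\textbf{Comparison with the paper.} The paper uses Quillen's Theorem~A each time. It shows that the slices $(\mathcal{W}_{\mathbb{R}})_{/\overline{V}}$ are cofiltered, and likewise the slices of the fiber map over a lattice multimorphism $(\overline{U}_1,\dots,\overline{U}_t)\to\overline{V}$. You instead compute the homotopy types outright. $\mathcal{W}_{\mathbb{R}}$, $\mathcal{W}_{\mathbb{Z}}$ and every root fiber split into pieces indexed by direction data. Each piece is cofiltered: a common lower bound is obtained componentwise, and shrinking the components of an admissible tuple keeps it admissible. So everything is homotopy discrete, and it only remains to match index sets. This gives more information (an explicit $\pi_0$ of the fibers) and makes (b) transparent. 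The paper's route is shorter and never needs to identify $\pi_0$.

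The geometric input is the same in both. In the paper it sits in the non-emptiness of the slice: a lattice-disjoint configuration in $\overline{V}$ must be realized by disjoint real cones in $V$. That is exactly the point you single out as the main obstacle.

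\textbf{Settling the crux.} You can do this without any translation argument, using asymptotic cones $K=C_{(0,t,\alpha)}$.
\begin{itemize}
\item If $K_i\cap K_j\neq\emptyset$, then $U_i\cap U_j$ contains balls of arbitrarily large radius, hence lattice points. So $\overline{U}_i\cap\overline{U}_j=\emptyset$ forces $K_i\cap K_j=\emptyset$.
\item By the same argument, $\overline{U}_i\subseteq\overline{V}$ forces $K_i\subseteq K_V$.
\item Conversely, if the $K_i$ are pairwise disjoint and contained in $K_V$, the cones $p_V+K_i$ (sharing the apex $p_V$ of $V$) are pairwise disjoint and lie in $V$.
\end{itemize}
Hence on both the real and the lattice side the index set consists of the data with $K_i$ pairwise disjoint and contained in $K_V$. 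This set depends only on $K_V$, which gives condition (2) and hypothesis (b) at once.

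\textbf{A correction in your deduction of the bottom row.} $\delta_{\mathbb{Z}}$ is not surjective on morphisms in general, because $\mathrm{Cone}(\mathbb{Z}^n)$ is a full subposet of the subsets of $\mathbb{Z}^n$. For example, take the planar cone with boundary directions $50^\circ$ and $70^\circ$ and apex $(-0.3,0.9)$.
\begin{itemize}
\item Its lattice trace lies in $\{x\geq 1,\ y\geq 1\}$, which is the trace of the open positive quadrant.
\item Its boundary slopes are irrational, which pins down the apex, so this trace has a unique real lift.
\item That lift contains points with $x<0$, so it is contained in no lift of the quadrant's trace.
\end{itemize}

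The conclusion still holds. Given $\overline{U}\subseteq\overline{U}'$, the lattice argument above gives $K_U\subseteq K_{U'}$. So translating $U$ far along its axis produces $U''\subseteq U$ in $\mathcal{W}_{\mathbb{R}}$ with $U''\subseteq U'$. Now $\delta_{\mathbb{Z}}(U'')\subseteq\overline{U}$ lies in $\mathcal{W}_{\mathbb{Z}}$, and $\delta_{\mathbb{Z}}(U'')\subseteq\overline{U}'$ is the image of a real inclusion. By two-out-of-three, any $\mathcal{W}_{\mathbb{Z}}$-local algebra whose restriction along $\delta_{\mathbb{Z}}$ is locally constant is itself locally constant.
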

\begin{proof}
We prove the claim by applying Proposition \ref{prop:corolla-wise_criterion I}. Regarding condition (1), which asserts that $\cW_{\mathbb{R}}\to \cW_{\mathbb{Z}}$ is a weak homotopy equivalence, we will use Quillen's Theorem A. To apply this theorem, we must show that for each $\overline{V}\in \mathrm{Cone}(\mathbb{Z}^n)$, the slice category
	$$
	\big(\cW_{\mathbb{R}}\big)_{/\overline{V}}=\left\{ 
	U\in \cW_{\mathbb{R}}:\quad \delta_{\mathbb{Z}}(U)=
	U\cap \mathbb{Z}^n\xhookrightarrow{\;\infty\;}\overline{V}\right\}
	$$
	is cofiltered. Such a poset is clearly non-empty. Moreover, it satisfies that given two objects $U,U'\in \big(\cW_{\mathbb{R}}\big)\!\,_{/\overline{V}}$, we can find a third one $U''$ such that $U\xhookleftarrow{\;\infty\;}U''\xhookrightarrow{\;\infty\;} U'$ by simply picking the apex of $U''$ to be any point in the non-empty intersection $U\cap U'$ (see Remark \ref{rem: Cones, fixed direction and angle}). Hence, $\big(\cW_{\mathbb{R}}\big)\!\,_{/\overline{V}}$ is cofiltered, as claimed.
	
	For condition (2), letting $V\in \mathrm{Cone}(\mathbb{R}^n)$ and $\overline{V}=\delta_{\mathbb{Z}}(V)$, we must show that
	$$
	\Alg_{C_t}(\cP_{\mathrm{Cone}(\mathbb{R}^n)})_{V}^{\cW_{\mathbb{R}}} \longrightarrow \Alg_{C_t}(\cP_{\mathrm{Cone}(\mathbb{Z}^n)})_{\overline{V}}^{\cW_{\mathbb{Z}}}
	$$
	is a weak homotopy equivalence for any $t\geq 0$.
	This is another consequence of Quillen's Theorem A.  Simply note that the slice category over a multimorphism $\mu^{\overline{U}_{\bullet}}_{\overline{V}}\colon (\overline{U}_1,\dots,\overline{U}_t)\to \overline{V}$ in $\cP_{\mathrm{Cone}(\mathbb{Z}^n)}$ can be identified with
$$
\big( \Alg_{C_t}(\cP_{\mathrm{Cone}(\mathbb{R}^n)})_{V}^{\cW_{\mathbb{R}}}  \big)_{/\mu^{\overline{U}_{\bullet}}_{\overline{V}}}\simeq \left\{ U_{\bullet}\in \big(\cW_{\mathbb{R}/V}\big)^{\times t}:
\begin{matrix}
	&U_i\cap U_j=\diameter\text{ for }i\neq j,\text{ and}\\[1mm]
	& \delta_{\mathbb{Z}}(U_i)\xhookrightarrow{\;\infty\;}\overline{U}_i \text{ for all }i
\end{matrix}
\right\},
$$
where $\cW_{\mathbb{R}/V}$ denotes the subposet of $\cW_{\mathbb{R}}$ spanned by open cones which are contained in $V$. To see that the latter is cofiltered, we use again Remark \ref{rem: Cones, fixed direction and angle} to find a span  $U_i\xhookleftarrow{\;\infty\;}U''_i\xhookrightarrow{\;\infty\;} U'_i$ for $1\leq i\leq t$ given any two objects $U_{\bullet}$, $U'_{\bullet}$ in the poset. Non-emptiness follows from the fact that we can choose the apex for $U_i$ to be any element of $\overline{U}_i$.

The remaining hypothesis, i.e., in this case condition (b):
$$
	\Alg_{C_t}(\cP_{\mathrm{Cone}(\mathbb{R}^n)})_{V}^{\cW_{\mathbb{R}}} \longrightarrow 	\Alg_{C_t}(\cP_{\mathrm{Cone}(\mathbb{R}^n)})_{V'}^{\cW_{\mathbb{R}}}
$$
is a weak homotopy equivalence for any $V\xhookrightarrow{\;\infty\;}V'$ (note that essential surjectivity of $\cW_{\mathbb{R}}\to \cW_{\mathbb{Z}}$ is clear), follows by a similar application of Quillen's Theorem A.
\end{proof}

Our next result, Proposition \ref{prop: Second Lattice Localization}, connects the previous prefactorization algebras over cones with ordinary factorization algebras over the $(n-1)$-sphere $\mathbb{S}^{n-1}$ \cite[Definitions 3.1.13 and 3.3.8]{karlsson_assembly_2026}. It builds on the extremely useful \emph{Lurie--Seifert--van Kampen Theorem}, whose statement we recall.

\begin{thm}[{\cite[Theorem A.3.1]{lurie_higher_nodate}}]\label{thm:LSvK}
	Let $X$ be a topological space and $\mathrm{Open}(X)$ be the poset of open subsets of $X$. Let $\chi\colon \mathrm{C}\to \mathrm{Open}(X)$ be a functor from a small category $\mathrm{C}$. For a point $x\in X$, denote by $\mathrm{C}_x$ the full subcategory of $\mathrm{C}$ spanned by objects $c\in \mathrm{C}$ such that $x\in \chi(c)$. If $\mathrm{C}_x$ is weakly contractible for any $x\in X$, the canonical map
	%
	$$
	\underset{c\in \mathrm{C}}{\mathrm{hocolim}}\,\chi(c)\longrightarrow X
	$$
	is a weak homotopy equivalence.
\end{thm}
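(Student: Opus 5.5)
This is Lurie's Seifert--van Kampen theorem, and the plan is to reduce it to a hypercovering-style statement about singular chains, following Lurie's argument. We first model $\mathrm{hocolim}_{c\in\mathrm{C}}\chi(c)$ by the simplicial space $[n]\mapsto \coprod_{c_0\to\cdots\to c_n}\mathrm{Sing}\,\chi(c_n)$; equivalently, it is the geometric realization of the bisimplicial set $B_{n,m}$ whose elements are pairs consisting of an $n$-simplex $c_0\to\cdots\to c_n$ of $N\mathrm{C}$ and a singular $m$-simplex $\sigma\colon|\Delta^m|\to \chi(c_0)$. Here we use the contravariant-indexing convention, so that restricting along $c_0\to c_n$ lands in the smallest open set. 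Realizing first in the $n$-direction, for each fixed $m$ we get $\coprod_\sigma N(\mathrm{C}_\sigma)$. In this expression $\sigma$ ranges over singular $m$-simplices of $X$, and $\mathrm{C}_\sigma\subseteq \mathrm{C}$ is the full subcategory of those $c$ with $\sigma(|\Delta^m|)\subseteq\chi(c)$. The map to $\mathrm{Sing}_m X$ collapses each $N(\mathrm{C}_\sigma)$ to a point.

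The first reduction is therefore the following: if every $\mathrm{C}_\sigma$ were weakly contractible, then the levelwise map to $\mathrm{Sing}_\bullet X$ would be a weak equivalence, and by the realization lemma the canonical map would be a weak homotopy equivalence. The hypothesis, however, only controls $\mathrm{C}_x$ for single points $x$. So the second step is to replace $\mathrm{Sing}\,X$ by the subcomplex $\mathrm{Sing}^{\mathcal U}X$ of \emph{small} simplices, and to show that this replacement costs nothing.

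More precisely, consider simplices $\sigma$ whose image is contained in some $\chi(c)$ and which are suitably small relative to the cover. We aim to prove $N(\mathrm{C}_\sigma)\simeq N(\mathrm{C}_x)$ for a point $x$ in the image; this is the main obstacle. Lurie handles it with a refinement argument: one shows that the collection of open sets $U\subseteq X$ for which $\mathrm{C}_U=\{c: U\subseteq\chi(c)\}$ is weakly contractible still covers $X$ together with all finite intersections of the relevant kind. To do this, one writes $\mathrm{C}_x$ as a filtered colimit over neighborhoods $U\ni x$ of $\mathrm{C}_U$ (filtered in the direction of shrinking $U$). Since $\mathrm{C}$ is small, weak homotopy types commute with this filtered union. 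Contractibility of $\mathrm{C}_x$ then implies that each finite simplicial sphere in $N(\mathrm{C}_x)$ already bounds in $N(\mathrm{C}_U)$ for $U$ small enough.

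I would then pass to the cover $\mathcal U$ of $X$ by those $U$, and use the classical fact that $\mathrm{Sing}^{\mathcal U}X\hookrightarrow \mathrm{Sing}\,X$ is a weak equivalence (barycentric subdivision). The remaining technical point is to make the resulting homotopy-theoretic comparison coherent. Rather than asking each $\mathrm{C}_\sigma$ to be contractible, one proves that for every simplex, the map $\mathrm{hocolim}\to X$ has weakly contractible homotopy fibers. This is done by a Quillen Theorem A style argument on the Grothendieck construction: we compare $\int_{\mathrm{C}}\mathrm{Sing}\,\chi$ with the category of simplices of $X$, and check fibers over each point via the filtered-colimit argument above. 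This is where I expect the real work to lie.
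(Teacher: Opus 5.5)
The paper gives no proof of this statement; it is quoted from Lurie's \emph{Higher Algebra}. So your outline has to stand on its own, and it breaks exactly at the step you flag as the main obstacle. The bisimplicial reduction is fine: after realizing in the $\mathrm{C}$-direction you are looking at $\coprod_{\sigma}N(\mathrm{C}_\sigma)\to\mathrm{Sing}_m X$, so you need the simplices in play to have weakly contractible $N(\mathrm{C}_\sigma)$. The claim you use to arrange this is false: namely, that the opens $U$ with $N(\mathrm{C}_U)$ weakly contractible cover $X$, or that $N(\mathrm{C}_\sigma)\simeq N(\mathrm{C}_x)$ for small $\sigma$. It is therefore not something you can import from Lurie. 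From $N(\mathrm{C}_x)=\mathrm{colim}_{U\ni x}N(\mathrm{C}_U)$ you only learn that each \emph{individual} sphere dies over a neighbourhood depending on that sphere; there is no uniform $U$.

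Here is a concrete counterexample. Let $X=\mathbb{R}$ and $V_N=(-1/N,1/N)$. Let $\mathrm{C}$ be the poset with objects $a_N,b_N$ ($N\geq 1$), $\chi(a_N)=\chi(b_N)=V_N$, and order generated by $a_{N+1}<a_N$ and $a_{N+1}<b_N$. Add all nonempty open $q\subseteq\mathbb{R}\setminus\{0\}$ with $\chi(q)=q$, ordered by inclusion, and with $q<a_N$ and $q<b_N$ iff $q\subseteq V_N$. Every $\mathrm{C}_x$ is cofiltered, hence weakly contractible:
\begin{itemize}
\item for $x\neq 0$, the $q$'s containing $x$ are closed under intersection, and $V_N\setminus\{0\}$ lies below $a_N$ and $b_N$;
\item for $x=0$, $a_K$ lies below every object of index $<K$.
\end{itemize}
Yet for every open $U\ni 0$ and every non-constant singular simplex $\sigma$ through $0$, $\mathrm{C}_U$ and $\mathrm{C}_\sigma$ are either empty or of the form $\{a_N,b_N\}_{N\leq M}$ with $M$ finite. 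The nerve of the latter is $S^0$: $b_M$ is isolated and the rest has minimum $a_M$. So no cover and no subdivision makes the simplices through $0$ good.

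Your Theorem~A fallback fails on the same example. For $\int_{\mathrm{C}}\mathrm{Sing}\,\chi\to\Delta_{/\mathrm{Sing}X}$, take a short path $\tau$ from $-\epsilon$ to $\epsilon$. The slice over $\tau$ has the homotopy type of the homotopy pushout of $N(\mathrm{C}_{-\epsilon})\leftarrow N(\mathrm{C}_\tau)\to N(\mathrm{C}_{\epsilon})$, i.e.\ of $*\leftarrow S^0\to *$, which is a circle.

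The theorem still holds here because of a global mechanism, not a simplex-wise one, and this is the missing idea. Write $W(U)=N(\mathrm{C}_U)$ for the resulting presheaf on $\mathrm{Open}(X)$. Then $\mathrm{hocolim}_{\mathrm{C}}\mathrm{Sing}\,\chi\simeq\int^{U}W(U)\times\mathrm{Sing}(U)$ (derived coend), and $\mathrm{Sing}(X)$ is the same expression with $W$ replaced by the point. The hypothesis says precisely that $W\to *$ is an equivalence on stalks, hence after hypersheafification.

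What you need, then, is that $U\mapsto\mathrm{Sing}(U)$ satisfies descent for hypercovers (Dugger--Isaksen). A hands-on substitute is a relative lifting argument for maps $f\colon|K|\to X$ from finite complexes. Choose objects of $\mathrm{C}$ face by face on a subdivision: a functor $\phi$ from the opposite face poset to $\mathrm{C}$ with $f(\tau)\subseteq\chi(\phi(\tau))$ yields a lift to the homotopy colimit via the barycentric subdivision. Re-subdivide at each stage, so that the neighbourhood required may depend on the cell being filled. Some input of this strength is unavoidable: applying the theorem to the category of simplices of an open hypercover recovers the Dugger--Isaksen theorem.
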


\begin{prop}\label{prop: Second Lattice Localization}
The inclusion of orthogonal categories $\mathrm{Cone}_{\circ}(\mathbb{R}^n)^{\perp}\hookrightarrow \mathrm{Cone}(\mathbb{R}^n)^{\perp}$ induces a local equivalence $\cP_{\mathrm{Cone}_{\circ}(\mathbb{R}^n)}^{\,\sharp}\hookrightarrow \cP_{\mathrm{Cone}(\mathbb{R}^n)}^{\,\sharp}$. In particular, we have equivalences of $\infty$-categories
$$
\mathrm{FA}^{\mathrm{lc}}_{\mathbb{S}^{n-1}}(\cV)\simeq \mathrm{pFA}_{\mathrm{Cone}_{\circ}(\mathbb{R}^n)}^{\mathrm{lc}}(\cV) \simeq 	\mathrm{pFA}_{\mathrm{Cone}(\mathbb{R}^n)}^{\mathrm{lc}}(\cV) \simeq  \mathrm{pFA}^{\mathrm{lc}}_{\mathrm{Cone}(\mathbb{Z}^n)}(\cV) \;,
$$
for any presentably symmetric monoidal $\infty$-category $\cV$, where  $\mathrm{FA}^{\mathrm{lc}}_{\mathbb{S}^{n-1}}(\cV)$ denotes the $\infty$-category of locally constant \textit{factorization algebras} over the $(n-1)$-sphere.
\end{prop}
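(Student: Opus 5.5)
The main claim is that $\iota\colon \cP_{\mathrm{Cone}_{\circ}(\mathbb{R}^n)}^{\,\sharp}\hookrightarrow \cP_{\mathrm{Cone}(\mathbb{R}^n)}^{\,\sharp}$ is a local equivalence. I plan to deduce it from Proposition \ref{prop:corolla-wise_criterion I}, using option (b). With maximal markings, $\Alg_{C_t}(\cP_{\mathrm{C}})^{\mathrm{C}}$ is the whole poset of disjoint configurations $(U_1,\dots,U_t)$ inside some $V$. The fiber over $V$ is the poset $D_t(V)$ of $t$-tuples of pairwise disjoint cones contained in $V$.

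\textbf{Condition (1).} Both $\mathrm{Cone}(\mathbb{R}^n)$ and $\mathrm{Cone}_\circ(\mathbb{R}^n)$ have to be weakly contractible.
\begin{itemize}
\item For $\mathrm{Cone}_\circ$, I will apply Theorem \ref{thm:LSvK} to $\chi\colon \mathrm{Cone}_\circ(\mathbb{R}^n)\to \mathrm{Open}(\mathbb{S}^{n-1})$, sending a cone to its trace on the sphere (an open geodesic ball). For $x\in\mathbb{S}^{n-1}$, the category $\mathrm{C}_x$ of balls containing $x$ is cofiltered, since the intersection of two such balls contains a small ball around $x$. Hence $\mathrm{C}_x$ is contractible, and $|\mathrm{Cone}_\circ|\simeq \operatorname{hocolim}\chi$ maps to $\mathbb{S}^{n-1}$ by a weak equivalence.
\item Since each $\chi(c)$ is contractible, $\operatorname{hocolim}\chi\simeq|\mathrm{C}|$. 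So $|\mathrm{Cone}_\circ|\simeq \mathbb{S}^{n-1}$. This is not contractible, so I must compare $\mathrm{Cone}_\circ$ to $\mathrm{Cone}(\mathbb{R}^n)$ rather than prove contractibility.
\item To compare, I will use Quillen's Theorem A on $\iota$: for a cone $V=C_{(p,t,\alpha)}$, the comma category $\{C\in\mathrm{Cone}_\circ: C\subseteq V\}$ must be contractible. This requires $0\in\overline{V}$ or similar, which fails in general.
\item I will therefore instead use the retraction $r\colon C_{(p,t,\alpha)}\mapsto C_{(0,t,\alpha)}$. It is not order-preserving, so I will pass through the poset $\mathrm{Cone}$ of all cones and apply Theorem \ref{thm:LSvK} again. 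The target is the ``space of directions at infinity'': send $V$ to its set of asymptotic directions, an open ball in $\mathbb{S}^{n-1}$. Then the categories $\mathrm{C}_x$ (cones whose asymptotic cone contains direction $x$) are cofiltered by the argument of Remark \ref{rem: Cones, fixed direction and angle}, which gives $|\mathrm{Cone}(\mathbb{R}^n)|\simeq\mathbb{S}^{n-1}$ compatibly with $\iota$.
\end{itemize}

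\textbf{Condition (2), and the locally constant hypothesis of Lemma \ref{lem:dendroidal Rezk nerve is Segal} for $\mathrm{Cone}_\circ$.} The same method applies to the configuration posets. I will send $(U_1,\dots,U_t)$ to $\prod_i\chi(U_i)$, an open subset of the configuration space of $t$ disjoint balls in the sphere trace of $V$. Theorem \ref{thm:LSvK} then identifies $|D_t(V)|$ with the ordered configuration space $\mathrm{Conf}_t(B)$ of $t$ points in the open ball $B=V\cap\mathbb{S}^{n-1}$ (respectively, in the direction ball of $V$). The comparison maps between these posets are then the evident maps of configuration spaces: the restriction maps $\mathrm{Conf}_t(B)\to\mathrm{Conf}_t(B')$ for nested balls, and the identity on directions under $\iota$. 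These are homotopy equivalences. Essential surjectivity of $\mathrm{Cone}_\circ\to\mathrm{Cone}$ fails literally, so I will use option (a) for $\mathrm{Cone}(\mathbb{R}^n)$ instead, checking its local constancy the same way.

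\textbf{Main obstacle.} The hardest step is the point-set verification that the categories $\mathrm{C}_x$ of configurations of disjoint cones containing given directions are weakly contractible. The proposed argument: for disjoint cones, shrinking the angles around fixed directions stays in the poset and is cofiltered. The delicate point is nonzero apexes, where disjointness of the cones is not determined by their directions. I expect to handle this by translating apexes far backward along $-t$ through inclusions in a zigzag.

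\textbf{The displayed equivalences.} These follow from the local equivalence together with Corollary \ref{cor: First and a half Lattice Localization} and the identity $\mathrm{pFA}^{\mathrm{lc}}=\mathrm{pFA}^{\cW}$ after localizing. The identification with $\mathrm{FA}^{\mathrm{lc}}_{\mathbb{S}^{n-1}}$ uses that $\mathrm{Cone}_\circ$ is the orthogonal category of open balls in $\mathbb{S}^{n-1}$ with disjointness. Locally constant prefactorization algebras on a basis of disks coincide with locally constant factorization algebras by \cite{karlsson_assembly_2026}.
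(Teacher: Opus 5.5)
Your argument is correct, but it reaches the conclusion by a different route from the paper's.

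\textbf{How the routes differ.} The paper never works with $\mathrm{Cone}(\mathbb{R}^n)$ directly.
\begin{enumerate}
\item It exhausts it by the subcategories $\mathrm{Cone}_R(\mathbb{R}^n)$ of cones with apex of norm $\le R$.
\item It uses that localization is a left adjoint, and that these filtered colimits of $1$-operads are $\infty$-colimits, to reduce to $\mathrm{Cone}_\circ(\mathbb{R}^n)\hookrightarrow\mathrm{Cone}_R(\mathbb{R}^n)$.
\item It then applies the corolla-wise criterion with option (a), using the finite sphere $U\mapsto U\cap\mathbb{S}^{n-1}_{R+1}$. This set is a disk precisely because the apex lies inside that sphere.
\end{enumerate}
You instead use the sphere at infinity, $C_{(p,t,\alpha)}\mapsto \mathrm{dir}=\{u:\angle(u,t)<\alpha\}$, which is defined for every cone. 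No exhaustion is needed, and Theorem~\ref{thm:LSvK} gives $|\mathrm{Cone}(\mathbb{R}^n)|\simeq\mathbb{S}^{n-1}$ and $|D_t(V)|\simeq\mathrm{Conf}_t(\mathrm{dir}\,V)$ at once, compatibly with $\iota$.

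\textbf{What each route costs and buys.} Your route needs three small lemmas that the paper's $\theta$ gets for free:
\begin{enumerate}
\item $\mathrm{dir}$ is monotone.
\item Disjoint cones have disjoint direction caps: if $u$ lies in both caps, then $\lambda u$ lies in both cones for $\lambda\gg 0$.
\item If $x\in\mathrm{dir}(U)$, then the thin cone $C_{(\lambda x,x,\epsilon)}$ lies in $U$ for $\lambda\gg0$ and $\epsilon\ll1$.
\end{enumerate}
In return, the point-posets required by Theorem~\ref{thm:LSvK} are genuinely cofiltered. Over $\mathbb{S}^{n-1}_{R+1}$ with $R>0$ and $n\ge 2$ they are not. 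Two thin cones with apexes $\pm Re_2$ aimed at $(R+1)e_1$ both contain that point but have bounded intersection, so they have no common lower bound. At that step the paper needs more than the cofilteredness it asserts.

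\textbf{Your ``main obstacle''.} It is not a real obstacle, but the fix you sketch points the wrong way. Moving apexes backward along $-t$ enlarges cones, which can break both containment in $V$ and disjointness. To get a common lower bound of two tuples in $\mathrm{C}_{x_\bullet}$, push apexes forward instead: by the third lemma, $C_{(\lambda x_i,x_i,\epsilon)}\subseteq U_i\cap U_i'$, and disjointness is inherited from the $U_i$. Disjointness only has to be produced for non-emptiness. There, the thin cones with apexes $\lambda x_i$ lie in $V$ for $\lambda\gg0$, and once $\epsilon$ is small they are separated by the hyperplanes $\langle -,x_i-x_j\rangle=0$.

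\textbf{Two minor points.}
\begin{itemize}
\item The opening sentence of your Condition (1) misstates what is required. The criterion does not ask for contractibility, only that $\iota$ be a weak equivalence between two spaces each $\simeq\mathbb{S}^{n-1}$, which is what you end up proving.
\item Option (a) uses that $\mathrm{Conf}_t(B)\to\mathrm{Conf}_t(B')$ is a homotopy equivalence for nested caps of any radius $<\pi$. This follows by stereographic projection from a point outside $\overline{B'}$, which turns both caps into nested round balls.
\end{itemize}
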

\begin{proof} Let us begin by contemplating the sequence of equivalences of $\infty$-categories. The last one already appeared in Corollary \ref{cor: First and a half Lattice Localization}. The first one follows from identifying the orthogonal category $\mathrm{Cone}_{\circ}(\mathbb{R}^n)^{\perp}$, via $U\mapsto U\cap\mathbb{S}^{n-1}$, with the orthogonal category of (non-empty and connected) Euclidean disks in $\mathbb{S}^{n-1}$ (see \cite[Proposition 4.5]{benini_c-categorical_2026} and \cite[Corollary 5.3.4]{karlsson_assembly_2026}). Therefore, it just remains to check the first claim:  $\cP_{\mathrm{Cone}_{\circ}(\mathbb{R}^n)}^{\,\sharp}\hookrightarrow \cP_{\mathrm{Cone}(\mathbb{R}^n)}^{\,\sharp}$ is a local equivalence.
	
For such an endeavor, we introduce an auxiliary full orthogonal subcategory $\mathrm{Cone}_R(\mathbb{R}^n)^{\perp}$ of $\mathrm{Cone}(\mathbb{R}^n)^{\perp}$ for any $R\in \mathbb{R}_{\geq 0}$, whose objects are those open cones with apex $p$ satisfying $\vert\vert p\vert\vert \leq R$. These operads determine a functor
$$
\mathbb{R}_{\geq 0}\longrightarrow \RelOpinfty,\quad R\longmapsto \cP_{\mathrm{Cone}_{R}(\mathbb{R}^n)}^{\,\sharp}
$$
whose colimit is $\cP_{\mathrm{Cone}(\mathbb{R}^n)}^{\,\sharp}$. (Note that filtered colimits of ordinary operads and categories are homotopy colimits, essentially because filtered colimits of simplicial sets are homotopy colimits in both the Kan--Quillen model structure or the Joyal model structure.)
 Therefore, using that the localization functor $L\colon \RelOpinfty\to \Opinfty$ is a left adjoint and 2-out-of-3 for local equivalences, we may conclude the claim by showing that  $\cP_{\mathrm{Cone}_{\circ}(\mathbb{R}^n)}^{\,\sharp}\hookrightarrow \cP_{\mathrm{Cone}_{R}(\mathbb{R}^n)}^{\,\sharp}$ is a local equivalence. Let us demonstrate that Proposition \ref{prop:corolla-wise_criterion I} applies in this case.

To check condition (1),  $\mathrm{Cone}_{\circ}(\mathbb{R}^n)\to \mathrm{Cone}_{R}(\mathbb{R}^n)$ is a weak homotopy equivalence, apply Theorem \ref{thm:LSvK} to the commutative triangle
$$
\begin{tikzcd}
	\mathrm{Cone}_{\circ}(\mathbb{R}^n) \ar[rr, hookrightarrow]\ar[rd,"\theta"'] && \mathrm{Cone}_{R}(\mathbb{R}^n)\ar[ld,"\theta"]\\
	& \mathrm{Open}\big(\mathbb{S}^{n-1}_{R+1}\big)
\end{tikzcd},
$$  	
where $\mathbb{S}^{n-1}_{R+1}=\{x\in \mathbb{R}^n:\; \vert\vert x\vert\vert =R+1\}$ denotes the $(n-1)$-sphere of radius $R+1$, and $\theta\colon U\mapsto U\cap \mathbb{S}^{n-1}_{R+1}$. Since we are going to repeat the argument once again below, we leave the details to the reader.

Condition (2) asserts that, for any $V\in \mathrm{Cone}_{\circ}(\mathbb{R}^n)$ and $t\geq 0$, the map
 $$
 \begin{tikzcd}
 	\Alg_{C_t}(\cP_{\mathrm{Cone}_{\circ}(\mathbb{R}^n)})_{V} \ar[r, hookrightarrow]&[6mm] \Alg_{C_t}(\cP_{\mathrm{Cone}_{R}(\mathbb{R}^n)})_{V} 
 \end{tikzcd}
 $$
 is a weak homotopy equivalence. We see that this is the case by applying again Theorem \ref{thm:LSvK} to identify the homotopy type of both categories. Consider the following commutative triangle of posets
 $$
 \begin{tikzcd}
 	\Alg_{C_t}(\cP_{\mathrm{Cone}_{\circ}(\mathbb{R}^n)})_{V} \ar[rr,hookrightarrow]\ar[rd,"\theta_{\circ}"'] && \Alg_{C_t}(\cP_{\mathrm{Cone}_{R}(\mathbb{R}^n)})_{V}\ar[ld,"\theta"]\\
 	& \mathrm{Open}\big(\mathrm{Conf}_t(V\cap\mathbb{S}^{n-1}_{R+1})\big)
 \end{tikzcd}
 $$  	
 determined by setting $\theta\big(\mu^{U_{\bullet}}_{V}\big):=\prod_{i}(U_i\cap \mathbb{S}^{n-1}_{R+1})$ (consult the discussion preceding Notation \ref{notat: locally constant pFAs} for our conventions). Notice that, for any $i=1,\dots,t$, the topological space $U_i\cap \mathbb{S}^{n-1}_{R+1}$ is contractible. Also, for any $x_{\bullet}\in \mathrm{Conf}_t(V\cap \mathbb{S}^{n-1}_{R+1})$, the subposet
 $$
 \left\{\mu^{U_{\bullet}}_{V}\in \Alg_{C_t}(\cP_{\mathrm{Cone}_{R}(\mathbb{R}^n)})_{V}:\quad x_{\bullet}\in \prod_i(U_i\cap \mathbb{S}^{n-1}_{R+1})\right\}\subseteq \Alg_{C_t}(\cP_{\mathrm{Cone}_{R}(\mathbb{R}^n)})_{V}
 $$
 is cofiltered, and thus weakly contractible. The same holds replacing $\mathrm{Cone}_{R}(\mathbb{R}^n)$ by $\mathrm{Cone}_{\circ}(\mathbb{R}^n)$. Since we just checked the hypothesis of Theorem \ref{thm:LSvK} plus the natural equivalences  $\theta_{\circ}\xrightarrow{\;\sim\;}\ast\xleftarrow{\;\sim\;}\theta$, we obtain a commutative diagram of spaces 
 $$
 \begin{tikzcd}
 	\vert\Alg_{C_t}(\cP_{\mathrm{Cone}_{\circ}(\mathbb{R}^n)})_{V}\vert \ar[rr]\ar[d, leftarrow,"\rotatebox{180}{$\simeq$}"' sloped] && \vert\Alg_{C_t}(\cP_{\mathrm{Cone}_{R}(\mathbb{R}^n)})_{V}\vert\ar[d, leftarrow, "\simeq" sloped]\\
 	\mathrm{hocolim}\,\theta_{\circ} \ar[rr]\ar[rd,"\simeq"' sloped] && \mathrm{hocolim}\,\theta \ar[ld,"\simeq"' sloped]\\
 	& \mathrm{Conf}_t(V\cap \mathbb{S}^{n-1}_{R+1})
 \end{tikzcd}
 $$  
 proving that the upper horizontal map is a weak homotopy equivalence.
 
 Finally, a similar argument shows that 
 $$
 \vert\Alg_{C_t}(\cP_{\mathrm{Cone}_{R}(\mathbb{R}^n)})_{V}\vert \longrightarrow \vert\Alg_{C_t}(\cP_{\mathrm{Cone}_{R}(\mathbb{R}^n)})_{V'}\vert
 $$
 is a weak homotopy equivalence for any $ V\hookrightarrow V'$ in $\mathrm{Cone}_{R}(\mathbb{R}^n)$, proving (a).
\end{proof}

A direct consequence of Proposition \ref{prop: Second Lattice Localization} is that the locally constant prefactorization algebra $\mathbf{SSS}_{(\mathfrak{A},\pi_0)}\in \mathrm{pFA}_{\mathrm{Cone}(\mathbb{Z}^n)}^{\mathrm{lc}}\big(\Alg_{\mathbb{E}_1}(C^*\EuScript{C}\mathrm{at}_{\mathbb{C}}) \big)$ constructed in \cite[Corollary 4.3]{benini_c-categorical_2026} yields \textit{exactly} the same information as its underlying locally constant factorization algebra structure over $\mathbb{S}^{n-1}\times \mathbb{R}$ from \cite[Corollary 4.8]{benini_c-categorical_2026}; see loc.cit.\ for more details.

\subsection{Lorentzian localizations} One important source of orthogonal categories, and hence of prefactorization algebras, comes from Lorentz geometry. Let us focus on a couple of orthogonal categories and refer the interested reader to \cite{benini_operads_2021,benini_strictification_2023,benini_equivalence_2026} for more examples and details. For the rest of this subsection, we are going to fix a spacetime $M$ (i.e., a globally hyperbolic, connected, oriented, and time-oriented Lorentzian $m$-manifold; with or without time-like boundary), and later on, even restrict to the case $M=\mathbb{M}$ is flat Minkowski spacetime.    

Given $M$, we can define an orthogonal category $\mathrm{COpen}(M)^{\perp}$ as follows: (1) Its underlying category is the full subposet of $\mathrm{Open}(M)$ spanned by causally convex open subsets--i.e.\ $U\subseteq M$ open subspace such that any future/past-directed causal curve that starts and ends in $U$ completely lies in $U$--. (2) $U_1\perp_V U_2$ if and only if there is no future-directed causal curve in $V$ passing through both $U_1$ and $U_2$.
The associated operad $\cP_{\mathrm{COpen}(M)}$ comes with an interesting relative structure, which we denote by $(\cP_{\mathrm{COpen}(M)}, \cat W_{\mathrm{ts}})$. Here,  $\cW_{\mathrm{ts}}\subseteq \mathrm{COpen}(M)$ denotes the wide subcategory of Cauchy morphisms, i.e., those inclusions $U\hookrightarrow V$ such that $U$ contains a Cauchy surface of $V$.\footnote{The subscript ``$\mathrm{ts}$'' stands for  \textit{time-slice}, as a short-hand notation for Cauchy morphisms motivated by their relation to \textit{the time-slice axiom} for AQFTs.} (See \cite{benini_higher_2019,benini_strictification_2023} for details, references, and motivation for these notions coming from Quantum Field Theory.) 

In this restricted setting, we can compute the localization of $(\cP_{\mathrm{COpen}(M)},\cat W_{\mathrm{ts}})$ rather explicitly because of the following construction. The orthogonal category $\mathrm{COpen}(M)^{\perp}$ admits a (pointed) idempotent orthogonal endofunctor $$\mathrm{D}\colon\mathrm{COpen}(M)^{\perp}\longrightarrow \mathrm{COpen}(M)^{\perp},$$ called \textit{Cauchy-development}, given by
$$
\mathrm{D}(U):=\left\{	p\in M:
\begin{matrix}
	&\text{any inextensible future/past-directed}\\
	&\text{causal curve stemming from }p\text{ intersects }U
\end{matrix}
\right\},
$$ 
where $\eta_U\colon U\hookrightarrow \mathrm{D}(U)$ is the obvious inclusion (see \cite[Definition 2.2 and Lemma 3.2]{benini_algebraic_2018}). By setting the essential image of $\mathrm{D}$ to be $\mathrm{COpen}_{\mathrm{D}}(M)$, we obtain an adjunction of orthogonal categories
\[
\begin{tikzcd}
	\mathrm{D}\from \mathrm{COpen}(M)^{\perp} \ar[r, shift left=1.8]\ar[r,hookleftarrow, shift right=1.8,"\scalebox{0.8}{$\perp$}"] & \mathrm{COpen}_{\mathrm{D}}(M)^{\perp}\from \mathrm{inc}
\end{tikzcd}.
\]
This induces an adjunction, that we denote with the same symbols, between the associated prefactorization operads. The upshot is that we have an identification of relative operads $(\cP_{\mathrm{COpen}_{\mathrm{D}}(M)},\cat W_{\mathrm{ts}})=\cP_{\mathrm{COpen}_{\mathrm{D}}(M)}^{\,\flat}$, and that $\eta_U\colon U\hookrightarrow \mathrm{D}(U)$ is clearly a Cauchy morphism for any $U\in \mathrm{COpen}(M)$.

More precisely, we obtain:
\begin{prop}\label{prop: First Lorentzian localization} The operad map $\mathrm{D}\colon \cP_{\mathrm{COpen}(M)}\to \cP_{\mathrm{COpen}_{\mathrm{D}}(M)}$ exhibits the localization of $\cP_{\mathrm{COpen}(M)}$ at  $\cW_{\mathrm{ts}}$. In particular, there is an equivalence of $\infty$-categories
	$$
	\mathrm{pFA}_{\mathrm{COpen}(M)}^{\cW_{\mathrm{ts}}}(\cV)\simeq \mathrm{pFA}_{\mathrm{COpen}_{\mathrm{D}}(M)}(\cV),
	$$
induced by restriction along $\mathrm{D}\dashv\mathrm{inc}$, for any symmetric monoidal $\infty$-category $\cV$. 
\end{prop}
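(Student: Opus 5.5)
The plan is to apply the second part of Proposition \ref{prop:reflective_localization}. We use the pair of morphisms of relative operads
\[
\mathrm{D}\colon (\cP_{\mathrm{COpen}(M)},\cW_{\mathrm{ts}})\rightleftarrows \cP_{\mathrm{COpen}_{\mathrm{D}}(M)}^{\,\flat}\colon \mathrm{inc}.
\]
First, check that both maps are morphisms in $\RelOpinfty$.

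\begin{itemize}
\item For $\mathrm{D}$: if $U\hookrightarrow V$ is Cauchy, then $\mathrm{D}(U)=\mathrm{D}(V)$, since a Cauchy surface of $V$ lies in $U$. Hence $\mathrm{D}$ sends $\cW_{\mathrm{ts}}$ to identities, which are in particular equivalences of $\mathrm{COpen}_{\mathrm{D}}(M)$.
\item For $\mathrm{inc}$: the only weak equivalences of $\cP^{\,\flat}_{\mathrm{COpen}_{\mathrm{D}}(M)}$ are the isomorphisms of a poset, i.e.\ identities, and these lie in $\cW_{\mathrm{ts}}$.
\end{itemize}

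The identification $(\cP_{\mathrm{COpen}_{\mathrm{D}}(M)},\cW_{\mathrm{ts}})=\cP^{\,\flat}_{\mathrm{COpen}_{\mathrm{D}}(M)}$ recalled before the statement packages the same facts. It rests on the observation that a Cauchy morphism between Cauchy-developed regions is an equality.

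Next, produce the required $2$-morphisms. Since $\mathrm{D}$ is idempotent onto its image, $\mathrm{D}\circ\mathrm{inc}=\mathrm{id}$. In the other direction, the unit $\eta\colon \mathrm{id}\Rightarrow \mathrm{inc}\circ\mathrm{D}$ has components $\eta_U\colon U\hookrightarrow \mathrm{D}(U)$, and these are Cauchy morphisms, so $\eta$ is a natural transformation with components in $\cW_{\mathrm{ts}}$.

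It remains to check that $\eta$ is a $2$-morphism of operad maps, i.e.\ natural with respect to all multimorphisms and not only unary ones. Because $\cP_{\mathrm{COpen}(M)}$ comes from a poset with singleton hom-sets, this reduces to two facts:
\begin{itemize}
\item $\mathrm{D}$ is an orthogonal functor, so the $2$-functor $\mathrm{OrthCat}\to\mathrm{Op}$ carries the natural transformation $\eta$ of orthogonal functors to a $2$-morphism of operad maps;
\item the relevant composites agree automatically, since operation sets are at most singletons.
\end{itemize}
In particular, $\mathrm{D}$ preserving $\perp$ is exactly the cited fact that $\mathrm{D}$ is an orthogonal endofunctor.

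Proposition \ref{prop:reflective_localization} now gives an equivalence
\[
\cP_{\mathrm{COpen}(M)}[\cW_{\mathrm{ts}}^{-1}]\simeq \cP_{\mathrm{COpen}_{\mathrm{D}}(M)}[\text{equivalences}^{-1}]=\cP_{\mathrm{COpen}_{\mathrm{D}}(M)},
\]
induced by $\mathrm{D}$. So $\mathrm{D}$ exhibits the localization. Alternatively, one can see this directly from the universal property: restriction along $\mathrm{D}$ is fully faithful, with inverse on $\cW_{\mathrm{ts}}$-inverting algebras given by restriction along $\mathrm{inc}$, the comparison being supplied by $\eta$.

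The equivalence of $\infty$-categories of prefactorization algebras then follows by applying $\Alg_{(-)}(\cV)$ and the definition of localization. Its inverse is restriction along $\mathrm{inc}$.

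The main obstacle is the verification that $\mathrm{D}$ inverts Cauchy morphisms, i.e.\ $\mathrm{D}(U)=\mathrm{D}(V)$ for Cauchy $U\subseteq V$, together with orthogonality of $\mathrm{D}$. I would quote both from \cite{benini_algebraic_2018} rather than reprove them. Everything else is formal.
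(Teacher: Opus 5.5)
Your proposal is correct and matches the paper's proof. The paper applies Proposition \ref{prop:reflective_localization} to the same pair $\mathrm{D}\colon(\cP_{\mathrm{COpen}(M)},\cW_{\mathrm{ts}})\rightleftarrows\cP^{\,\flat}_{\mathrm{COpen}_{\mathrm{D}}(M)}\colon\mathrm{inc}$, leaving implicit the hypothesis checks you spell out: $\mathrm{D}$ sends Cauchy morphisms to identities, $\mathrm{D}\circ\mathrm{inc}=\mathrm{id}$, and the unit $U\hookrightarrow\mathrm{D}(U)$ is a Cauchy-componentwise transformation that is natural with respect to multimorphisms because $\mathrm{D}$ is orthogonal.
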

\begin{proof} Apply Proposition \ref{prop:reflective_localization} to the adjunction of relative operads
	\[
	\begin{tikzcd}
		\mathrm{D}\from (\cP_{\mathrm{COpen}(M)},\cat W_{\mathrm{ts}}) \ar[r, shift left=1.8]\ar[r,hookleftarrow, shift right=1.8,"\scalebox{0.8}{$\perp$}"] & \cP_{\mathrm{COpen}_{\mathrm{D}}(M)}^{\,\flat}\from \mathrm{inc}
	\end{tikzcd},
	\]
 to see that we have a local equivalence.
\end{proof}

\begin{rem}\label{rem: operad for AQFTs} The same arguments apply to the $2$-functor $$
	\mathrm{OrthCat}\longrightarrow \mathrm{Op},\quad \mathrm{C}^{\perp}\longmapsto \cO_{\mathrm{C}}^{\perp},
$$
	 which associates to an orthogonal category the operad encoding AQFTs over $\mathrm{C}$ used in \cite{benini_strictification_2023}. This way we recover Theorem 3.6 in loc.cit.\ as an analog of Proposition \ref{prop: First Lorentzian localization} for AQFTs. Also observe that Proposition \ref{prop: First Lorentzian localization} admits, at least, as many variations as the examples of reflective localizations of orthogonal categories listed in \cite[Example 3.4]{benini_strictification_2023}.
\end{rem}

In contrast to this pleasant example, even simple modifications of the orthogonal category $\mathrm{COpen}(M)^{\perp}$ yield relative operads whose localization at Cauchy morphisms is not so simple. An instance of this phenomenon was observed in \cite{benini_haag-kastler_2025}. There, the authors noticed that the Cauchy-development functor $\mathrm{D}$ does not restrict, in general, to the full orthogonal subcategories 
$$
\mathrm{RC}^{\diamond}(M)^{\perp}\hookrightarrow \mathrm{RC}(M)^{\perp}\hookrightarrow\mathrm{COpen}(M)^{\perp}
$$
spanned by causally convex open subsets $U\subseteq M$ which are additionally relatively compact (resp.\ and diffeomorphic to $\mathbb{R}^m$ where $m=\mathrm{dim}(M)$). Nevertheless, as we will see below, these examples are under control thanks to the \emph{operadic calculus of left fractions} (see Section \ref{sec:CLFs}).

\begin{rem} Restricting to flat Minkowski spacetime $M=\mathbb{M}$, we recover this nice feature: The orthogonal endofunctor $\mathrm{D}$ restricts to both $\mathrm{RC}(\mathbb{M})^{\perp}$ and $\mathrm{RC}^{\diamond}(\mathbb{M})^{\perp}$. Hence, $\mathrm{D}\colon \cP_{\mathrm{RC}^{(\diamond)}(\mathbb{M})}\to \cP_{\mathrm{RC}^{(\diamond)}_{\mathrm{D}}(\mathbb{M})}$ exhibits the localization of $\cP_{\mathrm{RC}^{(\diamond)}(\mathbb{M})}$ at $\cW_{\mathrm{ts}}$. 
\end{rem}

It was shown in \cite[Corollary B.5]{benini_haag-kastler_2025} that the pair $(\mathrm{RC}(M),\cW_{\mathrm{ts}})$ admits a (categorical) calculus of left fractions. (The same proof demonstrates that this is also the case for $(\mathrm{RC}^{\diamond}(M),\cW_{\mathrm{ts}})$.) The following result can be employed to upgrade this into an operadic calculus of left fractions.
\begin{lem}\label{lem:categorical to operadic CLF}
	Let $\mathrm{C}^{\perp}$ be an orthogonal category equipped with a wide subcategory $\cW\subseteq \mathrm{C}$ admitting a (categorical) calculus of left (resp.\ right) fractions\footnote{We implicitly assume that $\cW$ has the $2$-out-of-$3$ property, since that is part of our Definition \ref{def:calc}. Of course, this assumption can be relaxed.}. Assume the following compatibility condition holds: for any commutative diagram in $\mathrm{C}^{\perp}$
	$$
\begin{tikzcd}
	U'_1 \ar[r, leftarrow, "w_1"]\ar[rrdd,"f'_1"'] & U_1\ar[rd,"f_1"'] && U_2 \ar[ld,"f_2"]\ar[r, "w_2"] & U'_2\ar[lldd,"f'_2"]\\[-6mm]
&  & V \ar[d,"w"]& & \\
&  & V'& &
\end{tikzcd},
	$$
	where $w,w_1,w_2$ belong to $\cW$, $f_1\perp f_2$ if, and only if, $f'_1\perp f'_2$. Then, $(\cP_{\mathrm{C}},\cW)$ admits an operadic calculus of left (resp.\ right) fractions. 
\end{lem}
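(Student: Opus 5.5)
The plan is to check the three axioms (CL1)–(CL3) of Definition \ref{def:calc} directly for $(\cP_{\mathrm{C}},\cW)$, using the categorical calculus of left fractions on $(\mathrm{C},\cW)$ for the unary data and the compatibility condition to control orthogonality. Axiom (CL1) is part of the hypothesis, since $2$-out-of-$3$ only concerns $\cW\subseteq \mathrm{C}=U\cP_{\mathrm{C}}$. The right-fraction case should follow by the same argument read in the dual direction.

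Recall that an operation $(U_1,\dots,U_n)\to V$ of $\cP_{\mathrm{C}}$ is a tuple $(f_1,\dots,f_n)$ of arrows $f_i\colon U_i\to V$ in $\mathrm{C}$ that are pairwise orthogonal. Composition with unary maps is done componentwise, and $\Sigma_n$ permutes the entries.

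For (CL2), start from an operation $(f_i)_{i}\colon (x_i)\to y$ and weak equivalences $w_i\colon x_i\to x_i'$. Apply the categorical Ore condition to each span $x_i'\leftarrow x_i\to y$. This gives $g_i\colon x_i'\to y_i$ and $v_i\colon y\to y_i$ in $\cW$ with $g_iw_i=v_if_i$. Since $\cW_{y/}$ is filtered (as in the proof of Lemma \ref{lem:lcalc}), choose a common weak equivalence $w\colon y\to y'$ factoring through all $v_i$, say $w=u_iv_i$; here $u_i\in\cW$ by $2$-out-of-$3$. Set $f_i'=u_ig_i$, so that $f'_iw_i=wf_i$. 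It remains to check that the $f'_i$ are pairwise orthogonal. For each pair $i\neq j$, the diagram with $U_1=x_i$, $U_2=x_j$, $V=y$, $V'=y'$ and the maps $w_i,w_j,w$ is exactly the diagram in the compatibility condition. The condition then gives $f_i\perp f_j\Rightarrow f'_i\perp f'_j$. For $n=0$, take $w=\mathrm{id}$ and nothing needs checking.

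For (CL3), take operations $\mu=(f_i)$ and $\mu'=(f'_i)$ with $f_iw_i=f'_iw_i$ for all $i$. The categorical axiom gives, for each $i$, a weak equivalence $v_i\colon y\to y_i$ with $v_if_i=v_if'_i$. Using filteredness of $\cW_{y/}$ again, choose one $w\colon y\to y'$ factoring through every $v_i$; then $wf_i=wf'_i$ for all $i$. Both $w\circ\mu$ and $w\circ\mu'$ are genuine operations, since postcomposition preserves orthogonality in an orthogonal category. Hence $w\circ\mu=w\circ\mu'$ as operations.

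The main point to watch is (CL2): the orthogonality of the new multimorphism is exactly what the compatibility condition provides, and this is the only place it is used (in the direction $\Rightarrow$). One also needs the filteredness of $\cW_{y/}$ to produce a single common target $y'$ for all $i$ simultaneously. That filteredness follows from the categorical calculus of left fractions together with $2$-out-of-$3$. In the right-fraction case, the corresponding step uses cofilteredness of $\cW_{/x}$ at each input separately, and the same compatibility condition, now read with the $w_i$ pointing into the $U_i$.
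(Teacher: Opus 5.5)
Your proof is correct and follows the paper's strategy. Both arguments reduce (CL2) and (CL3) to the categorical axioms input by input, and use the compatibility condition only to certify that the new tuple is pairwise orthogonal. The paper iterates the unary Ore condition one input at a time, applying compatibility at each stage with the other weak equivalence an identity and composing the resulting maps into a common target. You instead handle all inputs in parallel and merge the targets using filteredness of $\cW_{y/}$, so you invoke compatibility once per pair $i\neq j$. In the right-fraction case no merging (and no cofilteredness) is needed at all, since the target $y'$ is fixed by $w$; there it is the converse implication of the compatibility condition that gets used.
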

\begin{proof} First, recall that for a general orthogonal category $\mathrm{C}^{\perp}$, $m$-ary multimorphisms in $\cP_{\mathrm{C}}$ are given by $m$-tuples of maps in $\mathrm{C}$ with the same target, denoted $\mu(f_{\bullet})=(f_1,\dots,f_m)$, subject to $f_i\perp f_j$ for any $i\neq j$; see \cite[Definition 2.5]{benini_categorification_2021}. Then, (CL2) in Definition \ref{def:calc} follows by iterating the same axiom for $(\mathrm{C},\cW)$. 
Let us see how this works when $m=2$. Consider given $\{w_i\colon U_i\to U'_i\}_{i=1,2}$ in $\cW$ and $\mu(f_1,f_2)\colon (U_1,U_2)\to V$. Applying (CL2) for $(\mathrm{C},\cat W)$ on each component of the multimorphism we arrive at a commutative diagram in $\cP_{\mathrm{C}}$  
\[\begin{tikzcd}
	(U_1,U_2) &[6mm] (U_1',U_2) &[6mm] (U'_1,U'_2) \\
	V & \bullet & V'
	\arrow["{(w_1\text{,}\id)}", from=1-1, to=1-2]
	\arrow["{(\id\text{,}w_2)}", from=1-2, to=1-3]
	\arrow["\mu(f_{1}\text{,}f_{2})"', from=1-1, to=2-1]
	\arrow["\mu(f'_{1}\text{,}w'_1f_{2})", dashed, from=1-2, to=2-2]
	\arrow["\mu(f''_{1}\text{,}f''_{2})", dashed, from=1-3, to=2-3]
	\arrow["{w_1'}"', dashed, from=2-1, to=2-2]
	\arrow["{w_2'}"', dashed, from=2-2, to=2-3]
\end{tikzcd}.\]	
Notice the dashed vertical multimorphisms $\mu(f'_1,w'_1f_2)$ and $\mu(f''_1,f''_2)$ are well-defined due to the compatibility condition in the statement. A similar argument proves (CL3). The case of calculus of right fractions is dual. 
\end{proof}

\begin{cor}\label{cor: CLFs for Lorentzian pFAs}
The relative operad $(\cP_{\mathrm{RC}^{(\diamond)}(M)},\cW_{\mathrm{ts}})$ admits an operadic calculus of left fractions. In particular $\cP_{\mathrm{RC}^{(\diamond)}(M)}\to \cW^{-1}_{\mathrm{ts}}\cP_{\mathrm{RC}^{(\diamond)}(M)}$ (see Definition \ref{defn: Operad of left fractions}) exhibits the ($\infty$-)localization of $\cP_{\mathrm{RC}^{(\diamond)}(M)}$ at the Cauchy morphisms $\cat W_{\mathrm{ts}}$.
\end{cor}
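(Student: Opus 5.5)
The plan is to apply Lemma \ref{lem:categorical to operadic CLF} to the orthogonal category $\mathrm{RC}^{(\diamond)}(M)^{\perp}$ with $\cW=\cW_{\mathrm{ts}}$, and then invoke Theorem \ref{thm:lcalc}. The categorical calculus of left fractions for $(\mathrm{RC}^{(\diamond)}(M),\cW_{\mathrm{ts}})$ is \cite[Corollary B.5]{benini_haag-kastler_2025}, as recalled above. The $2$-out-of-$3$ property of Cauchy morphisms inside a poset of causally convex opens should be checked directly. Let $U\subseteq V\subseteq V'$. If $U\subseteq V$ and $V\subseteq V'$ are Cauchy, a Cauchy surface of $V'$ inside $V$ can be taken to be the development-type Cauchy surface, and its intersection with the Cauchy surface of $V$ inside $U$ must be arranged. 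The cleanest route is the characterization: $U\subseteq V$ is Cauchy if and only if $\mathrm{D}_V(U)=V$, i.e.\ every inextensible causal curve in $V$ meets $U$. With this characterization all three cases of $2$-out-of-$3$ follow by restricting inextensible curves (an inextensible curve in $V'$ restricts, by causal convexity, to inextensible curves in $V$).

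The key remaining step is the compatibility condition of Lemma \ref{lem:categorical to operadic CLF}. We are given $U_i\subseteq U_i'$ Cauchy and $V\subseteq V'$ Cauchy, with all inclusions into $V'$, and must show the following: no future-directed causal curve in $V$ meets both $U_1$ and $U_2$ if and only if no future-directed causal curve in $V'$ meets both $U_1'$ and $U_2'$.

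The direction "$\Leftarrow$" is immediate, since $U_i\subseteq U_i'$ and $V\subseteq V'$.

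For "$\Rightarrow$", suppose $\gamma$ is a causal curve in $V'$ from $p_1\in U_1'$ to $p_2\in U_2'$. I first produce points of $U_1$ and $U_2$ linked causally. Extend $\gamma$ through $p_1$ to an inextensible causal curve in $U_1'$; since $U_1\subseteq U_1'$ is Cauchy it meets $U_1$, at some $q_1$. Do the same at $p_2$ to get $q_2\in U_2$. The difficulty is that $q_1$ might lie to the future of $p_1$ and $q_2$ to the past of $p_2$, so that the concatenation is no longer future-directed.

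To handle this I would use the fact that $U_i'=\mathrm{D}(U_i)\cap U_i'$. Then $p_1$ lies in $J^+(U_1)\cup J^-(U_1)$, and I would choose the past-directed extension when $p_1\in J^+(U_1)$ and the future-directed one otherwise. In the bad case $p_1\in J^-(U_1)\setminus J^+(U_1)$, I would argue instead via $U_2$: symmetric reasoning produces a causal relation between $U_1$ and $U_2$ in one time direction or the other. The orthogonality relation is symmetric (no future-directed causal curve through both, in either order), which is exactly what makes this workable.

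Finally, the resulting causal curve lies in $V'$ and connects $U_1$ to $U_2$. Because $V\subseteq V'$ is Cauchy and $V$ is causally convex with $U_i\subseteq V$, the segment between points of $V$ stays in $V$. This gives a contradiction with $f_1\perp f_2$.

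I expect this compatibility step to be the main obstacle, in particular controlling time direction when pushing points of $U_i'$ into $U_i$. If it fails, the fallback is to use causal convexity of $U_i'$ together with global hyperbolicity (compactness of causal diamonds) to pick $q_i$ on a Cauchy surface $\Sigma_i\subseteq U_i$ of $U_i'$ and to use the time function splitting. Once the compatibility holds, Lemma \ref{lem:categorical to operadic CLF} gives the operadic calculus of left fractions, and Theorem \ref{thm:lcalc} gives the localization statement.
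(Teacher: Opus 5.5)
Your overall route matches the paper's: the categorical calculus of left fractions from \cite{benini_haag-kastler_2025}, upgraded via Lemma~\ref{lem:categorical to operadic CLF}, then Theorem~\ref{thm:lcalc}. Your extra check of $2$-out-of-$3$ is fine, although one case requires extending a curve from $V$ to $V'$ before restricting, not only restricting. The gap is in the compatibility step, which you correctly single out but do not close. Once you replace the curve through $p_i$ by the dichotomy $p_i\in\mathrm{J}^+(U_i)\cup\mathrm{J}^-(U_i)$, the points $q_1$ and $q_2$ are chosen on unrelated causal curves. Only the case $p_1\in\mathrm{J}^+(U_1)$, $p_2\in\mathrm{J}^-(U_2)$ yields $q_1\leq p_1\leq p_2\leq q_2$. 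In the mixed cases, e.g.\ $q_1\leq p_1\leq p_2$ together with $q_2\leq p_2$, nothing in your argument relates $q_1$ to $q_2$. Your ``symmetric reasoning via $U_2$'' (reversing time orientation and swapping indices) only reproduces the one good case. So the implication $f_1\perp f_2\Rightarrow f'_1\perp f'_2$ is not established as written. The time-function fallback does not fix this either, unless the $q_i$ are placed on a common curve.

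The missing idea is to use a single curve, which your first step nearly does. Extend $\gamma$ itself to one inextensible causal curve $\tilde\gamma$ in $M$. Since $U'_i$ is open and causally convex, the part of $\tilde\gamma$ lying in $U'_i$ is an inextensible causal curve in $U'_i$ through $p_i$. It therefore meets the Cauchy surface of $U'_i$ contained in $U_i$; this is just $U'_i\subseteq\mathrm{D}(U_i)$. Call the intersection points $q_1,q_2$. Because they lie on the same causal curve, the arc of $\tilde\gamma$ between them is a future-directed causal curve, in one order or the other. It lies in $V$, since $q_1,q_2\in V$ and $V$ is causally convex. As $U_1\perp_V U_2$ excludes future-directed causal curves through $U_1$ and $U_2$ in either order, this is a contradiction. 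So the time-direction problem never arises, and the Cauchy property of $V\subseteq V'$ is not needed at all. This is exactly the content of the paper's short proof: $\perp_V$ and $\perp_{V'}$ coincide with $\perp_M$ by causal convexity, $\mathrm{D}(U_i)=\mathrm{D}(U'_i)$, and $\mathrm{D}$ detects $\perp_M$ by \cite[Proposition 2.5]{benini_algebraic_2018}.
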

\begin{proof} In light of Lemma \ref{lem:lcalc}, we are reduced to checking the compatibility condition in Lemma \ref{lem:categorical to operadic CLF} for $(\mathrm{RC}^{(\diamond)}(M),\cW_{\mathrm{ts}})$. This follows from the fact that we can test causal orthogonality in the ambient spacetime $M$, and since Cauchy-development detects it \cite[Proposition 2.5]{benini_algebraic_2018}: 
	$$
	U\perp_V U'
    \;\Longleftrightarrow\; U\perp_M U' \;\Longleftrightarrow\; \mathrm{D}(U)\perp_M \mathrm{D}(U'),
	$$
	for any $U,U'\in \mathrm{RC}^{(\diamond)}(M)$. On the right-hand side, we are looking at $\mathrm{D}(U),\mathrm{D}(U')$ as objects in $\mathrm{COpen}(M)$.  
\end{proof}

\begin{rem}\label{rem: more on CLFs for Lorentzian operads} Various observations are in order. First, Corollary \ref{cor: CLFs for Lorentzian pFAs} implies strictification results, in the spirit of \cite{benini_strictification_2023}, for the time-slice axiom over $\mathrm{RC}^{(\diamond)}(M)$-prefactorization algebras. Second, in \cite{benini_equivalence_2026}, an interesting variation of $\mathrm{RC}(M)^{\perp}$, denoted $\mathrm{Loc}^{\mathrm{rc},\, \perp}_{/M}$ in loc.cit., and the relation between these orthogonal categories with \textit{additivity} for AQFTs and time-orderable prefactorization algebras was presented. The previous discussion also applies to show that $(\mathrm{Loc}^{\mathrm{rc},\, \perp}_{/M},\cW_{\mathrm{ts}})$ admits a categorical calculus of left fractions that can be upgraded to an operadic calculus of left fractions for the associated relative prefactorization operad (see \cite[Appendix B]{benini_equivalence_2026}). Lastly, replacing $\mathrm{C}^{\perp}\mapsto \cP_{\mathrm{C}}$ by $\mathrm{C}^{\perp}\mapsto \cO_{\mathrm{C}}$ (c.f.\ Remark \ref{rem: operad for AQFTs}), the analog to Lemma \ref{lem:categorical to operadic CLF} shows that the AQFT operad admits an operadic calculus of left fractions with respect to Cauchy morphisms, recovering \cite[Theorem 5.1]{benini_equivalence_2026}. 
\end{rem}

In the remainder of this section, we restrict our attention to ($m$-dimensional) flat Minkowski spacetime $M=\mathbb{M}$. In fact, our last example is dedicated to studying a further localization of $\cP_{\mathrm{RC}^{\diamond}(\mathbb{M})}$ which connects to the recent work \cite{benini_prefactorization_2026} and the theory of localized sectors. 

\begin{defn}\label{defn: Caus orthogonal category} Let $\mathrm{DCone}(\mathbb{M})^{\perp}$ be the orthogonal category given by the following data:
	\begin{itemize}
	\item Its objects are ordered pairs of points $p^{\pm}=(p^+,p^-)\in \mathbb{M}^{\times 2}$ satisfying $p^+\in \mathrm{I}^+(p^{-})$, i.e.\ $p^+$ lives in the chronological future of $p^-$.
 	\item As morphisms, it has
	$$
	\mathrm{DCone}(\mathbb{M})(p^{\pm},q^{\pm})=\begin{cases}
		* & \text{ if }p^+\in \mathrm{J}^-(q^+)\text{ and }p^-\in \mathrm{J}^+(q^{-}),\\[3mm]
		\diameter & \text{ else}.
	\end{cases}
	$$
	\item Its orthogonality relation is defined as follows:  $p_1^{\pm}\perp_{q^{\pm}}p_2^{\pm}$ if, and only if, $p_1^+\notin \mathrm{J}^+(p_2^-)$ and $p_1^{-}\notin \mathrm{J}^{-}(p_2^+)$.
	\end{itemize}	
\end{defn}  
\begin{rem}\label{rem:Connection of Caus and RC} Note that the conditions on the morphisms of $\mathrm{DCone}(\mathbb{M})$ read: $p^+$ lives in the causal past of $q^+$, denoted $p^+\in \mathrm{J}^-(q^+)$, and $p^-$ lives in the causal future of $q^-$, denoted $p^-\in \mathrm{J}^{+}(q^-)$. A similar translation applies to the orthogonality relation above. Furthermore, the rule $p^{\pm}\mapsto \diamondsuit(p^{\pm})=\mathrm{I}^-(p^+)\cap \mathrm{I}^+(p^-)$, where $\mathrm{I}^+(p)$ denotes the chronological future of $p$ (resp.\ $\mathrm{I}^{-}(p)$ denotes the chronological past of $p$), identifies $\mathrm{DCone}(\mathbb{M})^{\perp}$ with a full orthogonal subcategory of $\mathrm{RC}^{\diamond}(\mathbb{M})^{\perp}$. (See \cite[\textsection 5]{benini_prefactorization_2026}.)
\end{rem}

\begin{defn} Let $V\in \mathrm{RC}^{\diamond}(\mathbb{M})$. The \textit{space of causal (ordered) configurations of }$t$-\textit{points in }$V$, denoted $\mathrm{cConf}_{t}(V)$, is the subspace of $V^{\times t}$ spanned by the tuples $p_{\bullet}=(p_1,\dots,p_t)$ satisfying $p_i\notin \mathrm{J}(p_j)=\mathrm{J}^+(p_j)\cup \mathrm{J}^-(p_j)$ for any $i\neq j$. In words, $p_i$ does not belong to the causal future or past of $p_j$.   
\end{defn}
\begin{rem}\label{rem: causal configurations are homotopy equivalent}
 A simple argument shows that $\mathrm{cConf}_{t}(\diamondsuit(p^{\pm}))$ and $\mathrm{cConf}_{t}(\diamondsuit(q^{\pm}))$ are homotopy equivalent for any $p^{\pm}\leq q^{\pm}$ in $\mathrm{DCone}(\mathbb{M})$. Similarly,  $\mathrm{cConf}_{t}(\diamondsuit(p^{\pm}))$ is homotopy equivalent to $\mathrm{Conf}_{t}(\mathbb{R}^n)$ with $n=\mathrm{dim}(\mathbb{M})-1$ for any $p^{\pm}\in \mathrm{DCone}(\mathbb{M})$. See \cite[\textsection 5]{benini_prefactorization_2026}. A variation of the argument proves that the same holds for any $V\subseteq V'$ in $\mathrm{RC}^{\diamond}(\mathbb{M})$.
\end{rem}

\begin{prop}\label{prop: Second Lorentzian localization}
The orthogonal functor $\diamondsuit\colon \mathrm{DCone}(\mathbb{M})^{\perp}\hookrightarrow \mathrm{RC}^{\diamond}(\mathbb{M})^{\perp}$ from Remark \ref{rem:Connection of Caus and RC} induces a local equivalence of relative operads $\cP_{\mathrm{DCone}(\mathbb{M})}^{\,\sharp}\to \cP_{\mathrm{RC}^{\diamond}(\mathbb{M})}^{\,\sharp}$. In particular, we have equivalences of $\infty$-categories
$$
\mathrm{pFA}^{\mathrm{lc}}_{\mathrm{RC}^{\diamond}(\mathbb{M})}(\cV)\simeq\mathrm{pFA}^{\mathrm{lc}}_{\mathrm{DCone}(\mathbb{M})}(\cV)\simeq \Alg_{\mathbb{E}_{n}}(\cV)\;,
$$
for any symmetric monoidal $\infty$-category $\cV$, where $n=\mathrm{dim}(\mathbb{M})-1$ and $\mathbb{E}_n$ denotes the little $n$-disks operad.
\end{prop}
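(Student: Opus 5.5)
We will apply Proposition \ref{prop:corolla-wise_criterion I} to the relative operad map $\diamondsuit\colon \cP_{\mathrm{DCone}(\mathbb{M})}^{\,\sharp}\to \cP_{\mathrm{RC}^{\diamond}(\mathbb{M})}^{\,\sharp}$, in the same spirit as the proof of Proposition \ref{prop: Second Lattice Localization}, but with configuration spaces replaced by causal configuration spaces. Since both markings are maximal, all the categories involved are posets. Weak equivalences in this setting will be detected either via Quillen's Theorem A or via Theorem \ref{thm:LSvK}.

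\textbf{Step 1 (condition (1)).} Both $\mathrm{DCone}(\mathbb{M})$ and $\mathrm{RC}^{\diamond}(\mathbb{M})$ are weakly contractible, so the map between them is a weak homotopy equivalence. For $\mathrm{DCone}(\mathbb{M})$ this is clear: any two diamonds are contained in a common larger diamond, so the poset is filtered. For $\mathrm{RC}^{\diamond}(\mathbb{M})$ we use the same fact, since every relatively compact set lies in some diamond, so this poset is filtered as well.

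\textbf{Step 2 (condition (2)).} Fix $V=\diamondsuit(q^{\pm})$ and $t\geq 0$. The fibre $\Alg_{C_t}(\cP_{\mathrm{DCone}(\mathbb{M})})_{q^\pm}$ is the poset of $t$-tuples of pairwise orthogonal diamonds inside $q^\pm$, and similarly for $\mathrm{RC}^\diamond$. Following the sphere argument, we map both posets to $\mathrm{Open}(\mathrm{cConf}_t(V))$ via $U_\bullet\mapsto\prod_i U_i$.
\begin{itemize}
\item The image $\prod_i U_i$ lies in $\mathrm{cConf}_t(V)$ because pairwise causal disjointness of the $U_i$ gives pointwise causal independence of the chosen points.
\item Each $\prod_i U_i$ is contractible, since each $U_i\cong\mathbb{R}^m$.
\item For a causal configuration $x_\bullet$, the subposet of tuples with $x_i\in U_i$ is cofiltered. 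Sufficiently small diamonds around the points $x_i$ remain pairwise causally disjoint, because causal independence is an open condition, and intersecting candidates reduces to shrinking. Hence this subposet is weakly contractible.
\end{itemize}
Theorem \ref{thm:LSvK} then identifies both fibres with $\mathrm{cConf}_t(V)$ compatibly, so the fibre map is an equivalence.

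\textbf{Step 3 (hypothesis (a) for the target).} For $V\subseteq V'$ in $\mathrm{RC}^{\diamond}(\mathbb{M})$, the same identification reduces the needed local constancy to showing that $\mathrm{cConf}_t(V)\to\mathrm{cConf}_t(V')$ is a homotopy equivalence. This is supplied by Remark \ref{rem: causal configurations are homotopy equivalent}. Alternatively, we can use (b), since essential surjectivity of the core is automatic here.

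\textbf{Step 4 (the identification with $\mathbb{E}_n$).} We compare $\cP_{\mathrm{DCone}(\mathbb{M})}$ with little $n$-disks. Restricting diamonds to the time-zero slice is the natural first try, but it is not orthogonal, since causal independence is not disjointness of shadows on a fixed slice. Instead, we restrict to diamonds whose two tips are symmetric about the slice $\{t=0\}$. This orthogonal subcategory is equivalent to the poset of round disks in $\mathbb{R}^n$, with orthogonality given by disjointness. Its locally constant algebras are $\mathbb{E}_n$-algebras, by the known localization presenting $\mathbb{E}_n$ via disks in $\mathbb{R}^n$ (for instance Lurie's result in \cite{lurie_higher_nodate}, or Proposition \ref{prop:corolla-wise_criterion II} together with $\mathrm{Conf}_t$). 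The inclusion of this subcategory into $\mathrm{DCone}(\mathbb{M})$ is checked to be a local equivalence exactly as above, using $\mathrm{cConf}_t(\diamondsuit(p^\pm))\simeq\mathrm{Conf}_t(\mathbb{R}^n)$ from Remark \ref{rem: causal configurations are homotopy equivalent}.

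\textbf{Main obstacle.} We expect the hardest step to be the cofilteredness claim in Step 2: that for a causal configuration the poset of orthogonal diamond-tuples containing it is weakly contractible. The pairwise causal independence constraint must be preserved under the meets used to witness cofilteredness. We would handle this by taking diamonds small enough that they lie inside pairwise causally separated neighbourhoods of the $x_i$, and working within that cofinal subposet.
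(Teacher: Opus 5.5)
Your Steps 1--3 are exactly the paper's proof. Condition (1) of Proposition \ref{prop:corolla-wise_criterion I} comes from filteredness of both posets, condition (2) from Theorem \ref{thm:LSvK} applied to $U_\bullet\mapsto\prod_i U_i$ with cofiltered point-subposets, and hypothesis (a) from Remark \ref{rem: causal configurations are homotopy equivalent}. Drop your aside about (b), however. $\diamondsuit$ is an injective, non-surjective map of posets (most objects of $\mathrm{RC}^{\diamond}(\mathbb{M})$ are not diamonds), so it is not essentially surjective. Since (a) holds, nothing is lost.

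For the identification with $\mathbb{E}_n$, the paper gives no argument and simply cites \cite[Appendix A]{benini_prefactorization_2026}. Your Step 4 is a genuinely different, more self-contained route, and its poset computation is correct. (With the $\mathrm{J}$-convention of Definition \ref{defn: Caus orthogonal category}, two symmetric diamonds $\mathrm{D}(B)$ are orthogonal iff the closed balls are disjoint, which is harmless.)

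The comparison with $\mathrm{DCone}(\mathbb{M})$ does not, however, go through ``exactly as above.'' If you map the symmetric fibre over $\mathrm{D}(B)$ to $\mathrm{Open}(\mathrm{cConf}_t(\mathrm{D}(B)))$, the subposet over a causal configuration can be empty, so Theorem \ref{thm:LSvK} does not apply. For example, two points $(\tau,a),(\tau,a')$ with $0<|a-a'|<2|\tau|$ are spacelike separated. Yet any symmetric diamond $\mathrm{D}(B(c,\rho))$ containing $(\tau,a)$ satisfies $B(a,|\tau|)\subseteq B(c,\rho)$, so no two disjoint symmetric diamonds contain the pair.

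The repair is easy:
\begin{itemize}
\item Theorem \ref{thm:LSvK} identifies the symmetric fibre with the open subset $W\subseteq\mathrm{cConf}_t(\mathrm{D}(B))$ covered by products of disjoint symmetric sub-diamonds.
\item The vertical homotopy $(t_i,a_i)\mapsto(st_i,a_i)$, $s\in[0,1]$, preserves spacelike separation, $\mathrm{D}(B)$ and $W$.
\item Hence both $W$ and $\mathrm{cConf}_t(\mathrm{D}(B))$ deformation retract onto the time-zero configurations $\mathrm{Conf}_t(B)$, and the inclusion $W\hookrightarrow\mathrm{cConf}_t(\mathrm{D}(B))$ is an equivalence.
\end{itemize}
You need this statement about the comparison map itself. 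The abstract equivalence $\mathrm{cConf}_t\simeq\mathrm{Conf}_t(\mathbb{R}^n)$ quoted from the Remark does not suffice.

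You also still import the round-disk model of $\mathbb{E}_n$ from the literature, as the paper does. Lurie's theorem concerns all open disks, so you would either compare round disks to those, or apply Proposition \ref{prop:corolla-wise_criterion II} to the rescaling map into little disks.

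With these points addressed, your route derives the $\mathbb{E}_n$ statement from the paper's own criteria plus elementary causal geometry. The paper's citation is shorter but hides this step.
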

\begin{proof} The second claim immediately follows from the first one due to \cite[Appendix A]{benini_prefactorization_2026}. Thus, let us show that Proposition \ref{prop:corolla-wise_criterion I} applies to the map of relative operads $ \cP_{\mathrm{DCone}(\mathbb{M})}^{\,\sharp}\to \cP_{\mathrm{RC}^{\diamond}(\mathbb{M})}^{\,\sharp}$. 
	
First, observe that condition (1), $\diamondsuit\colon \mathrm{DCone}(\mathbb{M})\to \mathrm{RC}^{\diamond}(\mathbb{M})$ is a weak homotopy equivalence, follows from the fact that both posets are filtered since we work over flat Minkowski spacetime.

Regarding condition (2), we need to show that, for any  $p^{\pm}\in \mathrm{DCone}(\mathbb{M})$ and $t\geq 0$, the map
$$
\begin{tikzcd}
	\Alg_{C_t}(\cP_{\mathrm{DCone}(\mathbb{M})})_{p^{\pm}} \ar[r,"\diamondsuit_*"] &[6mm] \Alg_{C_t}(\cP_{\mathrm{RC}^{\diamond}(\mathbb{M})})_{\diamondsuit(p^{\pm})} 
\end{tikzcd}
$$
is a weak homotopy equivalence. For this purpose, we will apply Lurie--Seifert--van Kampen Theorem \ref{thm:LSvK}. Consider the following commutative triangle of posets
$$
\begin{tikzcd}
\Alg_{C_t}(\cP_{\mathrm{DCone}(\mathbb{M})})_{p^{\pm}} \ar[rr,"\diamondsuit_*"]\ar[rd,"\theta"'] && \Alg_{C_t}(\cP_{\mathrm{RC}^{\diamond}(\mathbb{M})})_{\diamondsuit(p^{\pm})}\ar[ld,"\theta'"]\\
& \mathrm{Open}\big(\mathrm{cConf}_t(\diamondsuit(p^{\pm}))\big)
\end{tikzcd}
$$  	
determined by setting $\theta'\big(\mu^{U_{\bullet}}_{\diamondsuit(p^{\pm})}\big):=\prod_{i}U_i$ and $\theta\big(\mu^{p^{\pm}_{\bullet}}_{p^{\pm}}\big):=\prod_i\diamondsuit(p^{\pm}_i)$	(consult the discussion preceding Notation \ref{notat: locally constant pFAs} for our conventions). Notice that, for any $i=1,\dots,t$, the topological spaces $U_i$ and $\diamondsuit(p^{\pm}_i)$ are contractible as they are diffeomorphic to $\mathbb{R}^{n+1}$. Also, for any $p_{\bullet}\in \mathrm{cConf}_t(\diamondsuit(p^{\pm}))$, the subposet
$$
\left\{\mu^{U_{\bullet}}_{\diamondsuit(p^{\pm})}\in \Alg_{C_t}(\cP_{\mathrm{RC}^{\diamond}(\mathbb{M})})_{\diamondsuit(p^{\pm})}:\quad p_{\bullet}\in \prod_iU_i\right\}\subseteq \Alg_{C_t}(\cP_{\mathrm{RC}^{\diamond}(\mathbb{M})})_{\diamondsuit(p^{\pm})}
$$
is cofiltered, and thus weakly contractible. The same holds replacing $\mathrm{RC}^{\diamond}(\mathbb{M})$ by $\mathrm{DCone}(\mathbb{M})$. Since we just checked the hypothesis of Theorem \ref{thm:LSvK} plus the natural equivalences  $\theta\xrightarrow{\;\sim\;}\ast\xleftarrow{\;\sim\;}\theta'$, we obtain a commutative diagram of spaces 
$$
\begin{tikzcd}
	\vert\Alg_{C_t}(\cP_{\mathrm{DCone}(\mathbb{M})})_{p^{\pm}}\vert \ar[rr,"\vert\diamondsuit_*\vert"]\ar[d, leftarrow,"\rotatebox{180}{$\simeq$}"' sloped] && \vert\Alg_{C_t}(\cP_{\mathrm{RC}^{\diamond}(\mathbb{M})})_{\diamondsuit(p^{\pm})}\vert\ar[d, leftarrow, "\simeq" sloped]\\
	\mathrm{hocolim}\,\theta \ar[rr]\ar[rd,"\simeq"' sloped] && \mathrm{hocolim}\,\theta' \ar[ld,"\simeq"' sloped]\\
	& \mathrm{cConf}_t(\diamondsuit(p^{\pm}))
\end{tikzcd}
$$  
proving that $\diamondsuit_*$ is a weak homotopy equivalence. Finally, for condition (a), we observe that a similar argument relates, for any $V\hookrightarrow V'$ in $\mathrm{RC}^{\diamond}(\mathbb{M})$, the map 
$$
 \vert\Alg_{C_t}(\cP_{\mathrm{RC}^{\diamond}(\mathbb{M})})_{V}\vert \longrightarrow \vert\Alg_{C_t}(\cP_{\mathrm{RC}^{\diamond}(\mathbb{M})})_{V'}\vert
$$
with the inclusion $ \mathrm{cConf}_t(V)\hookrightarrow \mathrm{cConf}_t(V')$. The last map is a homotopy equivalence by Remark \ref{rem: causal configurations are homotopy equivalent}. 
\end{proof}

To connect Proposition \ref{prop: Second Lorentzian localization} with the results in \cite{benini_prefactorization_2026}, let us first observe that the proof of the cited proposition carries over for any orthogonal category $\mathrm{C}^{\perp}$ sitting in a chain of orthogonal full subcategory inclusions
$$
\diamondsuit\colon \mathrm{DCone}(\mathbb{M})^{\perp}\hookrightarrow \mathrm{C}^{\perp}\hookrightarrow \mathrm{RC}^{\diamond}(\mathbb{M})^{\perp}.
$$
If we further assume that $\mathrm{C}^{\perp}$ is filtered and satisfies \cite[Assumptions 3.5 (1) and 3.9]{benini_prefactorization_2026}, we could apply \cite[Theorem 3.11]{benini_prefactorization_2026} to produce a locally constant prefactorization algebra  $\mathbf{SSS}_{(\mathfrak{A},\pi_0)}\in \mathrm{pFA}_{\mathrm{C}}^{\mathrm{lc}}\big(\Alg_{\mathbb{E}_1}(C^*\EuScript{C}\mathrm{at}_{\mathbb{C}}) \big)$. In this case, a direct application of Proposition \ref{prop: Second Lorentzian localization} implies that $\mathbf{SSS}_{(\mathfrak{A},\pi_0)}$ \textit{exactly} equips the global $C^*$-category of superselection sectors of $\mathfrak{A}$ relative to $\pi_0$ from \cite[Definition 3.1]{benini_prefactorization_2026} with an $\mathbb{E}_{m}$-algebra structure, where $m=\mathrm{dim}(\mathbb{M})$. Thus, we recover the same information as in \cite[Theorem 5.11]{benini_prefactorization_2026}. (See Remark 5.12 in loc.cit.)

\begin{rem} The need to consider an additional orthogonal subcategory $\mathrm{C}^{\perp}$ in the previous paragraph comes from the fact that $\mathrm{RC}^{\diamond}(\mathbb{M})^{\perp}$ fails to satisfy \cite[Assumption 3.9]{benini_prefactorization_2026}. Hence, one cannot simply apply \cite[Theorem 3.11]{benini_prefactorization_2026} to obtain a locally constant prefactorization algebra of superselection sectors defined over $\mathrm{RC}^{\diamond}(\mathbb{M})$. A counterexample, i.e., an orthogonal pair $U_1\perp_{\widetilde{U}}U_2$ in $\mathrm{RC}^{\diamond}(\mathbb{M})$ that cannot be completed to a diagram as \cite[(3.25)]{benini_prefactorization_2026}, in spacetime dimension $m=3$ is given by the following procedure: (1) In $\mathbb{R}^2$, consider the euclidean ball centered at the origin with radius $1$ (resp.\ $2$),
	$
\Sigma_1=\{x\in\mathbb{R}^2:\; \vert\vert x\vert\vert<1\}
	$
(resp.\ $\widetilde{\Sigma}$),	
and the annulus with internal (resp.\ external) radius $1$ (resp.\ $2$) minus a segment,
$$
\Sigma_1=\{x\in\mathbb{R}^2:\; 1<\vert\vert x\vert\vert<2\text{ and }x\notin \{0\}\times (1,2)\}.
$$  
(2) Embed $\mathbb{R}^2$ as a horizontal Cauchy surface in $\mathbb{M}$, and take Cauchy-developments
 $$ 
 U_1=\mathrm{D}(\Sigma_1), \quad \widetilde{U}=\mathrm{D}(\widetilde{\Sigma}), \quad U_2=\mathrm{D}(\Sigma_2).
 $$ 	
Then, the three regions $U_1,\widetilde{U},U_2\in \mathrm{RC}(\mathbb{M})$ are diffeomorphic to $\mathbb{R}^m$, relatively compact, and satisfy $U_1\perp_{\widetilde{U}}U_2$. However, there cannot exist $V_1\perp_W V_2$ in $\mathrm{RC}^{\diamond}(\mathbb{M})$ with $U_1\subseteq V_1$ and $U_2\subseteq V_2$ fulfilling \cite[Assumption 3.9]{benini_prefactorization_2026}, since, in this case, there is no room for an orthogonal pair of the form $U'_1\perp_{V_1}U_1$.	
\end{rem}

\section{Application IV: Cyclic operads through modules}
\label{sec:cyclicoperads}
\textit{Cyclic operads} are a generalization of operads whose inputs
are indistinguishable from their outputs. A closely related notion
is that of (right) \textit{module} over an operad, which receives inputs from
operads but has no output. (A review of the definitions of these structures
can be found in Section \ref{sec:cyc_mod_def}.) Whereas compositions
of cyclic operads are governed by \textit{unrooted} trees, those for
modules over operads are governed by rooted forests that are
``planted'' at a single vertex (Figure \ref{fig:planted}). Such
data can be interpreted as an unrooted tree equipped with a distinguished
vertex, suggesting a connection between cyclic operads and operadic
modules. 

\begin{figure}[h]

	\begin{center}
		\begin{tikzpicture}
			
			\draw 
			(-2,1.2) -- (-2,0.5); 
			\draw
			(-2,0.5) to[out=-90, in=180] (0,-1);
			\draw
			(0,-1) to[out=0, in=-90] (2,0.5);
			\draw
			(2,0.5) -- (2,1.2);
			
			\fill (-2,0.5) circle (2pt);
			\fill (2,0.5) circle (2pt);
			\fill (0,-1) circle (2pt);
			
			\node at (0,-1.5) {$v$};
			
			\draw (-2,0.5) -- (-2,1.2);          
			\draw (-2,0.5) -- (-2.8,1.2);      
			\draw (-2,0.5) -- (-1.2,1.2);      
			
			\draw (2,0.5) -- (2,1.2);            
			\draw (2,0.5) -- (1.2,1.2);        
			\draw (2,0.5) -- (2.8,1.2);        
			
		\end{tikzpicture}
	\end{center}
	
	\caption{Two rooted trees planted at the vertex $v$}
	\label{fig:planted}
\end{figure}

A closer inspection confirms this intuition, showing that the category
of cyclic operads in a symmetric monoidal category embeds fully faithfully
into that of ``pointed'' operadic modules. A natural question is:
Can we lift this to the homotopical setting? A recent work by Willwacher
solves a special case of this in the setting of rational chain complexes,
which led to the computation of the homotopy automorphism group of
the Batalin--Vilkovisky cooperad as a cyclic dg Hopf cooperad \cite{willwacher_cyclic_2024}.
In this section, we use the criteria developed in Section \ref{sec:criteria} to show that Willwacher's
result holds in arbitrary symmetric monoidal $\infty$-categories,
giving a complete answer to this question (Theorem \ref{thm:cyc_opd_mod}).

Our strategy is to define colored operads whose algebras are cyclic
operads (Definition \ref{def:cycopd}) and pointed operadic modules
(Definition \ref{def:opdmod}), and then apply one of our criteria. 

To define these colored operads, we need the notion of unrooted trees,
which we now introduce.
\begin{defn}
	An \emph{unrooted tree} is a finite set $T$, whose elements are called the \emph{points} of $T$, equipped with the following
	data:
	\begin{itemize}
		\item A map $T\to\{V,L\}$, where $V$ and $L$ are symbols. Elements of
		$T$ that map to $V$ are called \emph{vertices}, and those that
		map to $L$ are called \emph{leaves}. We write $\Vert\pr T$ and
		$\Leaf\pr T$ for the sets of vertices and leaves of $T$. 
		\item A collection $\Edge\pr T$ of subsets of $T$ of cardinality $2$,
		whose elements are called \emph{edges}.
	\end{itemize}
	These data are required to satisfy the following conditions:
	\begin{enumerate}
		\item Every leaf is contained in \textit{exactly one} edge.
		\item Every pair of distinct points
		is connected by \textit{exactly one} path.
		Here a \emph{path} in $T$ is a finite non-empty sequence $x_{1},\dots,x_{k}$
		of \textit{distinct} points of $T$, such that for each $1< i\leq k$,
		there is an edge having $x_{i-1}$ and $x_{i}$ as endpoints. 
	\end{enumerate}
	For each
	vertex $v\in\Vert\pr T$, we write $\Adj\pr v$ for the set of points
	$x\in T$ that are \emph{adjacent to} $v$, i.e., $\{v,x\}$ is an edge
	of $T$.
	
	An \emph{isomorphism} of unrooted trees $\phi\from T\cong T'$ is
	a bijection between the underlying set that respects the maps into
	$\{V,L\}$, and such that for every pair of points $x,y\in T$, the
	set $\{x,y\}$ is an edge if and only if $\{\phi\pr x,\phi\pr y\}$
	is an edge.
\end{defn}

\begin{rem}
	When drawing pictures of (un)rooted trees, we will represent vertices by
	black dots ``$\bullet$'', but we won't depict leaves.
\end{rem}

\begin{example}
	A \emph{star} is an unrooted tree with a single vertex.
	Such an unrooted tree is isomorphic to an \emph{$n$-star $S_{n}$} for some $n\geq 0$, which is the unrooted tree
	that has $n$ leaves $1,\dots,n$, a single vertex, and $n$ edges
	connecting the vertex with the leaves. The following is a picture
	of $S_{4}$:
	\begin{center}
		\begin{tikzpicture}
			\fill (0,0) circle (2pt);
			\draw (-1,0) -- (1,0);
			\draw (0,-1) -- (0,1);
		\end{tikzpicture}
	\end{center}
	Given an unrooted tree $T$ and a vertex $v\in T$, there is an associated star $S_v$ with set of leaves $\Adj\pr{v}$ connected to the unique vertex $v$.  
\end{example}

\begin{example}
	There is an unrooted tree with no vertex, two leaves, and a single edge joining
	them. It looks like a ``bar'', and is denoted by $\eta$. 
	Up to (unique) isomorphism, $\eta$ is the only unrooted tree without vertices.
\end{example}

\begin{example}
	\label{exa:substitution}An important operation of unrooted trees
	is \emph{substitution}. The main ingredients of this operation are
	an unrooted tree $T$, a vertex $v\in\Vert\pr T$, another unrooted
	tree $T_{v}$, and a bijection $\Leaf\pr{T_{v}}\cong\Adj\pr v$.
	From these data, we can define a new tree denoted by $T\lhd_{v}T_{v}$
	as follows: 
	\begin{enumerate}
		\item The underlying set of $T\lhd_{v}T_{v}$ is the pushout $\pr{T\setminus\{v\}}\amalg_{\Leaf\pr{T_{v}}}T{}_{v}$.
		\item The edges of $T\lhd_{v}T_{v}$ are the images 
		of the edges of $T_{v}$ and the images of edges of $T$ connecting
		points in $T\setminus\{v\}$. 
	\item The leaves of $T\lhd_{v}T_{v}$ are the images of the leaves
		of $T$.
	\end{enumerate}
	
	The following picture illustrates one instance of substitution:
	
	\begin{center}
		\begin{tikzpicture}
			
			\begin{scope}
				\draw[red] (0,0)--(0,-1);
				\draw[red] (0,0)--(1,1);
				\draw[red] (0.5, 0.5)--(0,1);
				\draw[red] (0,0)--(-1,1);
				\fill (0,0) circle (2pt);
				\fill (0.5,0.5) circle (2pt);
				\node at (.5,-1) {$T_v$};
			\end{scope}
			
			\begin{scope}[shift = {(3,0)}]
				\draw (-1,-1)--(1,1);
				\draw (.5,.5)--(0,1);
				\draw (-1,1)--(1,-1);
				\fill (0,0) circle (2pt);
				\fill (.5,.5) circle (2pt);
				\node at (0,-1) {$T$};
				\node at (0,-.5) {$v$};
			\end{scope}
			
			\begin{scope}[shift= {(5,0)}]
				\node at (-.25,0) {\Huge$\rightsquigarrow$};
			\end{scope}
			
			\begin{scope}[shift = {(7,0)}]
				
				\draw[red] (0,0)--(0,-1);
				\draw[red] (0,0)--(1,1);
				\draw[red] (0.5, 0.5)--(0,1);
				\draw[red] (0,0)--(-1,1);
				\fill (0,0) circle (2pt);
				\fill (0.5,0.5) circle (2pt);
				
				\draw (1.5, 1.5) --(2.5,2.5);
				\draw (-2.5, 2.5)--(-1.5, 1.5);
				\draw (-2.5,-2.5)--(-1.5, -1.5);
				\draw (2.5, -2.5)--(1.5, -1.5);
				\draw (2,2)--(1.5,2.5);
				\fill (2,2) circle (2pt);
				
				\draw (1,1) to[out=45, in =225] (1.5, 1.5);
				\draw (0,1) to[out=135, in =-45] (-1.5, 1.5);
				\draw (-1,1) to[out=135, in=45] (-1.5,-1.5);
				\draw (0,-1) to[out=-90,in=135] (1.5, -1.5);
				
				\draw[dashed] (1.5,1.5)--(-1.5, 1.5)--(-1.5,-1.5)--(1.5,-1.5)--(1.5, 1.5);
				
				\node at (0,-2) {$T\triangleleft_{v}T_v$};
				
			\end{scope}
			
		\end{tikzpicture}
	\end{center}
	
	Clearly, if we are to apply this construction twice at two different vertices of $T$, it doesn't matter in which order we perform the operations: First applying the construction at one vertex and then at the other, or in the reverse order. Thus, if we are given, for each $v\in\Vert\pr T$, an unrooted tree $T_{v}$
	and a bijection $\Leaf\pr{T_{v}}\cong\Adj\pr v$, we can define a
	new tree $T\lhd_{\Vert\pr T}( T_{v} )_{v\in \Vert(T)}$
	by iterating the ``substitution of $S_{v}$ by $T_{v}$''  above. 
\end{example}

Having defined unrooted trees, we can now define the colored operad
for cyclic operads.
\begin{defn}
	\label{def:cyc}We define a colored operad $\Cyc$ as follows: Its
	objects are the $n$-stars $S_{n}$, for $n\geq0$. Given a finite
	set $V$ and integers $n_{v}\geq0$
	for $v\in V$, and $n\geq0$, the
	set $\Cyc\binom{\pr{S_{n_{v}}}_{v\in V}}{S_{n}}$ is the set of equivalence
	classes of unrooted trees $T$ equipped with the following data:
	\begin{itemize}
		\item A bijection $V\cong\Vert\pr T$, by which we identify $V$ with $\Vert\pr T$.
		\item For each $v\in V$, a bijection $\Leaf\pr{S_{n_{v}}}\cong\Adj\pr v$.
		\item A bijection $\Leaf\pr{S_{n}}\cong\Leaf\pr T$.
	\end{itemize}
	Two such data $T$ and $T'$ are regarded as equivalent if there is
	an isomorphism of unrooted trees $T\cong T'$ that respects all the
	bijections involved. Composition is defined by the substitution operation
	described in Example \ref{exa:substitution}. 
\end{defn}

\begin{rem} Notice that in Definition \ref{def:cyc}, there are two possibilities for the set of multimorphisms $\Cyc\binom{\pr{S_{n_{v}}}_{v\in V}}{S_{n}}$ when $V$ is non-empty:  (1) $n_{v'}=0$ for just one element $v'\in V$; and (2) $n_{v}\geq 1$ for all $v\in V$. In Case (1), we have $n=0$, and $V$ and $\Cyc\binom{\pr{S_{n_{v}}}_{v\in V}}{S_{n}}$ are singletons; the latter only contains the identity morphism $\id\colon S_0\to S_0$. In Case (2), $n$ and $V$ are unobstructed.
	
	Also, observe that when $V$ is empty, we have	
	$$
	\Cyc\binom{\diameter}{S_n}=\begin{cases}
		*=\{\eta\} & \text{ if }n=2,\\[3mm]
		\diameter & \text{ else}.
	\end{cases}
	$$
\end{rem}

\begin{rem}
	If $\mathbf{C}$ is a symmetric monoidal category, then the category $\Alg_{\Cyc}\pr{\mathbf{C}}$
	of $\Cyc$-algebras in $\mathbf{C}$ is equivalent to the category of cyclic operads
	in $\mathbf{C}$. 
	For instance, the $\Sigma_2$-invariance of the unit (i.e., the unitality axiom) is encoded in the map
	$$
	\circ\colon \Cyc\binom{S_2}{S_2}\times \Cyc\binom{\diameter}{S_2}\cong \Sigma_2^{\op}\times *\xrightarrow{\quad!\quad}*\cong\Cyc\binom{\diameter}{S_2} . 
	$$
	Also, if we restrict our attention to the full suboperad
	spanned by $S_{n}$ with $n\geq2$, we get a colored operad for cyclic
	operads with $C\pr 0=C\pr 1=\diameter$.
\end{rem}

We now turn to the definition of the colored operad for pointed operadic
modules. For this, we need a few auxiliary notions.
\begin{defn}
	Let $T$ be an unrooted tree, and let $x\in T$ be a point. For each
	vertex $v\in\Vert\pr T\setminus\{x\}$, we write $\In_{x}\pr v=\Adj\pr v\setminus\{v_{1}\}$
	for the set of \emph{input vertices of $v$ relative to $x$}, where
	$v,v_{1},\dots,v_{k},x$ is the unique path from $v$ to $x$. 
	
	We can define the set of\emph{ input vertices relative to an edge}
	$e=\{x_{1},x_{2}\}$ of $T$ by introducing a new vertex $x_{+}$
	in the middle of $x_{1}$ and $x_{2}$, and then setting $\In_{e}\pr v=\In_{x_{+}}\pr v$
	for $v\in\Vert\pr T$.
\end{defn}

\begin{defn}
	A \emph{rooted tree} is an unrooted tree $T$ equipped with a distinguished
	element $r\in\Leaf\pr T$, called the \emph{root}. We write $\Leaf^{r}\pr T=\Leaf\pr T\setminus\{r\}$.
	
	If $T$ is a rooted tree, then for each vertex $v\in\Vert\pr T$,
	we write $\In\pr v=\In_{r}\pr v$ for the set of \emph{input vertices}
	of $v$. 
\end{defn}
\begin{example}
	For each $n\geq0$, we define the \emph{$n$-corolla} $C_n$ as the rooted tree whose underlying unrooted tree is the $(n+1)$-star $S_{n+1}$, and whose root is $n+1$.
\end{example}
\begin{defn}
	We define a colored operad $\OpdMod^{\pt}$ as follows. Its objects
	are $C_{n}$ for $n\geq0$ and $S_{n}$ for $n\geq0$. To describe
	its hom-sets, let $V$ be a finite set, and suppose we are given a
	symbol $X^{v}\in\{C,S\}$ for each $v\in V$. The maps into $C_{m}$
	can be described as follows:
	\begin{enumerate}
		\item If $X^{v}=S$ for some $v\in V$, then $\OpdMod^{\pt}\binom{\pr{X^{v}_{n_{v}}}_{v\in V}}{C_{m}}=\diameter$.
		\item Suppose that $X^{v}=C$ for all $v$. Then the set $\OpdMod^{\pt}\binom{\pr{C_{n_{v}}}_{v\in V}}{C_{m}}$
		is given by the equivalence classes (as in Definition \ref{def:cyc})
		of rooted trees $T$ equipped with the following data:
		\begin{enumerate}
			\item A bijection $V\cong\Vert\pr T$, by which we identify $V$ with $\Vert\pr T$.
			\item For each $v\in V$, a bijection $\In\pr{C_{n_{v}}}\cong\In\pr v$.
			\item A bijection $\In\pr{C_{m}}\cong\Leaf^{r}\pr T$.
		\end{enumerate}
	\end{enumerate}
	Next, the maps into $S_{m}$ can be described as follows:
	\begin{enumerate}
		\item If $X^{v}=S$ for more than one $v\in V$, then $\OpdMod^{\pt}\binom{\pr{X^{v}_{n_{v}}}_{v\in V}}{S_{m}}=\diameter$.
		\item If $X^{v}=S$ for exactly one $v\in V$, say $v_{0}$, then $\OpdMod^{\pt}\binom{\pr{X^{v}_{n_{v}}}_{v\in V}}{S_{m}}$
		is the set of equivalence classes of unrooted trees $T$ equipped
		with the following data:
		\begin{enumerate}
			\item A bijection $V\cong\Vert\pr T$, by which we identify $V$ with $\Vert\pr T$.
			\item A bijection $\Leaf\pr{S_{n_{v_{0}}}}\cong\Adj\pr{v_{0}}$.
			\item For each vertex $v\in V\setminus\{v_{0}\}$, a bijection $\In\pr{C_{n_{v}}}\cong\In_{v_{0}}\pr v$.
			\item A bijection $\Leaf\pr{S_{m}}\cong\Leaf\pr T$.
		\end{enumerate}
		\item If $X^{v}=C$ for all $v\in V$, then $\OpdMod^{\pt}\binom{\pr{X^{v}_{n_{v}}}_{v\in V}}{S_{m}}$
		is the set of equivalence classes of unrooted trees $T$ equipped
		with the following data:
		\begin{enumerate}
			\item A bijection $V\cong\Vert\pr T$, by which we identify $V$ with $\Vert\pr T$.
			\item An edge $e\in\Edge\pr T$.
			\item For each vertex $v\in V$, a bijection $\In\pr{C_{n_{v}}}\cong\In_{e}\pr v$.
			\item A bijection $\Leaf\pr{S_{m}}\cong\Leaf\pr T$.
		\end{enumerate}
	\end{enumerate}
	Again, composition is defined by substitution of trees. 
\end{defn}

\begin{rem}
	Let $(\mathbf{C},\otimes ,I)$ be a symmetric monoidal category. The category $\Alg_{\OpdMod^{\pt}}\pr{\mathbf{C}}$
	of algebras over $\OpdMod^{\pt}$ in $\mathbf{C}$ is equivalent to the category whose
	objects are pairs $\pr{O,M}$, where $O$ is an operad and $M$ is
	a pointed $O$-module, and whose morphisms $\pr{O,M}\to\pr{P,N}$
	are pairs of an operad map $O\to P$ and an $O$-module map $M\to N$
	under $I$.
\end{rem}

\begin{rem}
	Here is a picture of a typical morphism $\pr{S_{3},C_{2},C_{3},C_{2},C_0,C_{3}}\to S_{9}$
	in $\OpdMod^{\pt}$: 
	\begin{center}
		\begin{tikzpicture}
			
			\draw 
			(-2,1.2) -- (-2,0.5); 
			\draw
			(-2,0.5) to[out=-90, in=180] (0,-1);
			\draw
			(0,-1) to [out=0, in=-90] (2,0.5);
			\draw
			(2,0.5) -- (2,1.2);
			
			\fill (-2,0.5) circle (2pt);
			\fill (2,0.5) circle (2pt);
			\fill (0,-1) circle (2pt);
			\fill (0,.5) circle (2pt);
			\fill (-2.8, 1.2) circle (2pt);
			\fill (-0.8, 1.2) circle (2pt);
			
			\draw
			(0,-1)--(0,0.5); 
			\draw
			(0, 0.5)--(0.8,1.2); 
			\draw
			(0,0.5)--(-0.8,1.2); 
			
			\node at (0,-1.5) {$v_0$};
			
			\draw (-2,0.5) -- (-1.2,1.2);          
			\draw (-2,0.5) -- (-2.8,1.2);      
			\draw (-2.8, 1.2) -- (-3.2, 1.5); 
			\draw (-2.8, 1.2) -- (-2.4, 1.5); 
			
			\draw (2,0.5) -- (2,1.2);            
			\draw (2,0.5) -- (1.2,1.2);        
			\draw (2,0.5) -- (2.8,1.2);        
			
		\end{tikzpicture}
	\end{center}
	Implicit in this picture is that each black dot (a vertex) comes equipped
	with a choice of a labelling of edges that are attached above it, as
	well as a choice of labelling on the leaves.
	
	The following picture is a morphism $\pr{C_{2},C_{3},C_{2},C_{1}}\to S_{6}$:

	\begin{center}
		\begin{tikzpicture}
			\draw 
			(-2,0) to[out=-90, in=-90] (2,0);
			
			\draw
			(-2.8,1)--(-2,0); 
			\draw
			(-1.2,1)--(-2,0); 
			
			\draw
			(2,0) -- (2,1); 
			\draw
			(1.6,1.5) -- (2,1); 
			\draw
			(2.4, 1.5)--(2,1); 
			\draw
			(2,2)--(2.4, 1.5); 
			\draw
			(2.4,2)--(2.4, 1.5); 
			\draw
			(2.8,2)--(2.4, 1.5); 
			
			\fill (-2,0) circle (2pt);
			\fill (2,0) circle (2pt);
			\fill (2,1) circle (2pt);
			\fill (2.4, 1.5) circle (2pt);
			
			\node at (0,-1.5) {$e$};
			
		\end{tikzpicture}
	\end{center}Again, we suppressed the data of labelling. If $O$ is an operad and
	$M$ is a pointed $O$-module in a symmetric monoidal category $\pr{\mathbf{C},\t,I}$,
	this operation corresponds to the map
	\[
	I\otimes O\pr 2\otimes O\pr 3\otimes O\pr 2\otimes O\pr 1\xrightarrow{\eta\otimes \id} M\pr 2\otimes O\pr 2\otimes O\pr 3\otimes O\pr 2\otimes O\pr 1\to M\pr 6,
	\]
	where the first map is given by the pointing of $M$, and the second
	map is given by the operad structure on $O$ and the module structure
	of $M$.
\end{rem}

By construction, there is a forgetful functor 
\[
\OpdMod^{\pt}\longrightarrow\Cyc
\]
which maps $C_{n}\mapsto S_{n+1}$ and $S_{n}\mapsto S_{n}$, and
which modifies the labelling corresponding to the corollas $C_{n}$
by adding the label $n+1$ to its root. The main result of this section
asserts that this is a localization:
\begin{thm}
	\label{thm:cyc_opd_mod}The forgetful functor
	\[
	\OpdMod^{\pt}\longrightarrow\Cyc,
	\]
	is a localization at all morphisms.
\end{thm}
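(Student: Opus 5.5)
We plan to apply Proposition~\ref{prop:corolla-wise_criterion II} to the forgetful map $f\colon\OpdMod^{\pt}\to\Cyc$. The first step is to identify the marking. A unary operation of $\Cyc$ is an unrooted tree with a single vertex, that is, a star together with relabellings. Hence $U(\Cyc)$ is the groupoid $\coprod_m B\Sigma_m$, and $f^{-1}(U\Cyc^{\simeq})$ consists of \emph{all} unary operations of $\OpdMod^{\pt}$. So $\cW=U(\OpdMod^{\pt})$, and the theorem is exactly the conclusion of Proposition~\ref{prop:corolla-wise_criterion II}.

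Explicitly, $U(\OpdMod^{\pt})$ has the following morphisms:
\begin{itemize}
\item the automorphisms $\Sigma_n$ of $C_n$ and $\Sigma_m$ of $S_m$;
\item maps $C_{m-1}\to S_m$, one for each choice of the edge $e$ of the star, i.e.\ of which leaf becomes the root, together with labellings.
\end{itemize}
There are no maps from $S$ to $C$, and none between different arities.

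Condition (1) asks that $\cW\to U(\Cyc)^{\simeq}$ be a weak homotopy equivalence. We check it by Quillen's Theorem~A over each component $B\Sigma_m$. The relevant slice is $\cW_{/S_m}$, with objects the maps into $S_m$ in $\cW$. It has the terminal object $\id_{S_m}$, because every map $C_{m-1}\to S_m$ factors uniquely through $\id_{S_m}$, and automorphisms of $S_m$ act freely. The slice is therefore contractible. The case $m=0$ is degenerate (there is no $C_{-1}$) and is also fine.

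Condition (2) asks, for each color $x$ and each $n$, that
\[
\Alg_{C_n}(\OpdMod^{\pt})^{\cW}_x\longrightarrow \Alg_{C_n}(\Cyc)^{\simeq}_{f(x)}
\]
be a weak homotopy equivalence. The target is the groupoid of $n$-ary operations of $\Cyc$ into $f(x)$, modulo automorphisms of the inputs. We again apply Quillen's Theorem~A, now computing fibres over a fixed operation of $\Cyc$, i.e.\ a labelled unrooted tree $T$. A morphism in the source fibre is a tuple of unary maps $w_i$ on the inputs with $G\circ(w_i)=F$. There are two cases.
\begin{itemize}
\item If $x=C_k$, every input is forced to be a $C$-color, since there are no operations from an $S$-input into $C_k$. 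The fibre then consists of rooted structures on $T$ with the root being the prescribed leaf, and it is a point.
\item If $x=S_m$, the objects of the fibre over $T$ are the two kinds of lifts. One kind is an operation with exactly one $S$-input, i.e.\ a choice of a vertex $v_0$ of $T$. The other kind is an all-$C$ operation, i.e.\ a choice of an edge $e$ of $T$, including leaf edges. Morphisms go from the edge-lift $e$ to the vertex-lift $v$ exactly when $e$ is incident to $v$: precompose the $S$-input at $v$ with the map $C\to S$ whose root is $e$. Thus the fibre is the face poset of the barycentric subdivision of the graph $T$. Since $T$ is a tree, this poset is contractible. In the vertexless case $T=\eta$ the fibre is a single edge object.
\end{itemize}
Theorem~A then gives condition (2), and Proposition~\ref{prop:corolla-wise_criterion II} concludes.

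I expect the main obstacle to be the bookkeeping in condition (2). One must verify that these fibres are the strict/homotopy fibres of the functor, taking care of the automorphism groups acting on labellings, which act freely. One must also check that the morphisms between lifts are exactly the incidence relations, with no extra identifications. Finally, the degenerate cases $S_0$, $S_2$ with the tree $\eta$, and nullary $C_0$ inputs need to be checked by hand.
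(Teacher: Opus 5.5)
Your proposal is correct and follows the paper's proof essentially step by step. The paper also applies Proposition~\ref{prop:corolla-wise_criterion II} with $\cW=U(\OpdMod^{\pt})$. It treats the $C_{m-1}$-rooted fibres as a bijection on $\pi_0$ between groupoids with contractible components, and identifies the $S_m$-rooted fibre over each tree $T$ with the contractible poset $P^{\op}_{[T]}$ of vertices and edges of $T$ under incidence. The only cosmetic difference is in condition (1). There the paper uses the retraction $\cat X_m\to\cat Y_m$ together with a natural transformation from the identity, instead of Quillen's Theorem~A, but this is the same observation as your terminal object $\id_{S_m}$.
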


We will give a proof of Theorem \ref{thm:cyc_opd_mod} at the end of this section. We first look at its corollaries.
\begin{cor}
	\label{cor:cyc_opd_mod_infty}Let $\cat C$ be a symmetric monoidal
	$\infty$-category. The functor
	\[
	\Alg_{\Cyc}\pr{\cat C}\longrightarrow\Alg_{\OpdMod^{\pt}}\pr{\cat C}
	\]
	is fully faithful, and its essential image consists of those algebras
	that carry the maps $\{C_{n}\to S_{n+1}\}_{n\geq0}$ to equivalences
	of $\cat C$.
\end{cor}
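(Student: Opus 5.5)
We deduce Corollary \ref{cor:cyc_opd_mod_infty} formally from Theorem \ref{thm:cyc_opd_mod} together with the definition of operadic localization.

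By Theorem \ref{thm:cyc_opd_mod}, the forgetful map $\OpdMod^{\pt}\to\Cyc$ exhibits $\Cyc$ as $\OpdMod^{\pt}[\cat W^{-1}]$, where $\cat W=U(\OpdMod^{\pt})$ consists of all unary operations. Apply the definition of localization at the beginning of Section \ref{sec:mainthm} with $\cat P=\cat C^{\t}$. This shows directly that restriction
\[
\Alg_{\Cyc}\pr{\cat C}\longrightarrow\Alg_{\OpdMod^{\pt}}\pr{\cat C}
\]
is fully faithful. Its essential image consists of those algebras carrying every unary operation of $\OpdMod^{\pt}$ to an equivalence in $\cat C$.

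It remains to show that this condition is equivalent to inverting only the maps $C_n\to S_{n+1}$. We analyze the underlying category $U(\OpdMod^{\pt})$ using the definition.

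\begin{itemize}
\item \emph{Unary maps into $C_m$.} By clause (2) for targets $C_m$, a unary operation $C_n\to C_m$ is a rooted tree with one vertex, i.e.\ a corolla. Its data are a bijection $\In(C_n)\cong\In(v)$ and a bijection $\In(C_m)\cong\Leaf^r(T)$. So such maps exist only for $n=m$ and form the group $\Sigma_n$; they are isomorphisms. By clause (1), there are no unary maps $S_n\to C_m$.
\item \emph{Unary maps $S_n\to S_m$.} These are stars with relabellings, hence isomorphisms as well.
\item \emph{Unary maps $C_n\to S_m$.} These are given by case (3): a one-vertex unrooted tree with a chosen edge $e$, i.e.\ $m=n+1$. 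The data are the choice of one of the $n+1$ edges as the ``root'' direction, together with labellings. Up to precomposition and postcomposition with isomorphisms, every such map equals the canonical map $C_n\to S_{n+1}$ that sends the root to the label $n+1$.
\end{itemize}

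Hence every morphism of $U(\OpdMod^{\pt})$ is an isomorphism composed with one of the maps $C_n\to S_{n+1}$. An algebra inverts all unary maps if and only if it inverts these, since isomorphisms are automatically sent to equivalences and equivalences are closed under composition.

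The only point requiring care is the bookkeeping in case (3): one must check that an arbitrary choice of edge $e$ with arbitrary labellings differs from the canonical map only by automorphisms of $C_n$ and $S_{n+1}$. This holds because $\Sigma_{n+1}$ acts transitively on the edges of the star.
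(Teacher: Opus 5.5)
Your proof is correct and follows the same route as the paper, which deduces the corollary directly from Theorem~\ref{thm:cyc_opd_mod} and the definition of operadic localization. Your classification of the unary operations (automorphisms of each $C_n$ and $S_n$, plus maps $C_n\to S_{n+1}$ on which $\mathrm{Aut}(S_{n+1})\cong\Sigma_{n+1}$ acts freely and transitively) is the same bookkeeping the paper records in the remark after the corollary, used there to show that inverting the maps $C_n\to S_{n+1}$ suffices.
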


\begin{proof}
	This follows from Theorem \ref{thm:cyc_opd_mod} and the definition
	of localization of $\infty$-operads.
\end{proof}

\begin{rem} 
	Notice that the set of unary maps $\OpdMod^{\pt}\binom{C_n}{S_{n+1}}$, i.e., the maps inverted in Theorem \ref{thm:cyc_opd_mod}, can be identified with $\Sigma_{n+1}$.	Since, for any $n\geq0$, 
	$$
	\circ\colon \OpdMod^{\pt}\binom{S_{n+1}}{S_{n+1}}\times \OpdMod^{\pt}\binom{C_n}{S_{n+1}}\longrightarrow \OpdMod^{\pt}\binom{C_n}{S_{n+1}}
	$$
	yields a free and transitive group action, and since the endomorphisms of $S_{n+1}$ are all automorphisms, it suffices to invert just one map $C_n\to S_{n+1}$ in $\OpdMod^{\pt}$ for each $n\geq 0$. 
	Thus, in the situation of Corollary \ref{cor:cyc_opd_mod_infty}, a pointed module $(O,M)\in \Alg_{\OpdMod^{\pt}}(\cat C)$ lies in the essential image of the restriction along $\OpdMod^{\pt}\to \Cyc$ if and only if, for each $n\geq0$, the map
	$$
	O(n)\simeq I\otimes O(n) \xrightarrow{\;\eta\otimes \id \;} M(2)\otimes O(n)\xrightarrow{\;\phantom{\eta\otimes \id}\;} M(n+1) 	
	$$
is invertible, where the second component is the $O$-module action associated to the function $\underline{n}\to \underline{1}\hookrightarrow \underline{1}\!\,_+=\underline{2}$.
\end{rem}

For the next corollary, recall that a \emph{symmetric monoidal model
	category} is a closed symmetric monoidal category equipped with a model structure, which
	satisfies the pushout-product axiom and whose unit object is cofibrant (Example \ref{exa:symmonmodcat}).
\begin{cor}
	\label{cor:cyc_opd_mod_modelcats}Let $\mathbf{V}$ be a cofibrantly
	generated symmetric monoidal model category.
	Then the right Quillen functor
	\[
	\Alg_{\Cyc}\pr{\mathbf{V}}\longrightarrow\Alg_{\OpdMod^{\pt}}\pr{\mathbf{V}}
	\]
	of semi-model categories \cite[Definition 2.1.1]{white_smith_2024} has a fully
	faithful total right derived functor.
\end{cor}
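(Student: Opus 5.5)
The plan is to reduce the model-categorical statement to Corollary~\ref{cor:cyc_opd_mod_infty}, applied to the underlying symmetric monoidal $\infty$-category of $\mathbf{V}$. Write $\mathbf{V}_\infty$ for the symmetric monoidal $\infty$-category obtained by localizing $\mathbf{V}^{\t}$ at weak equivalences. By Theorem~\ref{thm:NS}, this localization is symmetric monoidal and is computed on cofibrant objects with the derived tensor product. For a discrete colored operad $P$, there is a comparison functor
\[
\Alg_{P}(\mathbf{V})[\weq^{-1}]\longrightarrow \Alg_{P}(\mathbf{V}_\infty).
\]
It is obtained from the lax symmetric monoidal localization $\mathbf{V}^{\t}\to\mathbf{V}_\infty^{\t}$ by postcomposition and the universal property of $\infty$-categorical localization. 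This comparison is natural in maps of operads $P\to P'$. I would therefore aim to show three things. First, for $P=\Cyc$ and $P=\OpdMod^{\pt}$ this comparison is an equivalence (a rectification statement). Second, under these identifications, the total right derived functor of restriction $\Alg_{\Cyc}(\mathbf{V})\to\Alg_{\OpdMod^{\pt}}(\mathbf{V})$ corresponds to restriction along $\OpdMod^{\pt}\to\Cyc$ on $\mathbf{V}_\infty$-valued algebras. Third, full faithfulness then follows from Corollary~\ref{cor:cyc_opd_mod_infty}, together with the fact that full faithfulness on underlying $\infty$-categories implies it on homotopy categories.

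For the rectification step, the key combinatorial input is that both $\Cyc$ and $\OpdMod^{\pt}$ are $\Sigma$-free. The symmetric group actions on their multimorphism sets are free. Indeed, a multimorphism is an isomorphism class of trees whose vertices are labelled bijectively by $V$. Each vertex also carries a bijection between its adjacent (or input) edges and the leaves of the corresponding star or corolla. A tree automorphism compatible with all these bijections must fix every vertex, so it is trivial. Hence a nontrivial permutation of $V$ never returns the same class.

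I would verify this freeness carefully, including the degenerate cases recorded in the remark after Definition~\ref{def:cyc}: the stump case $n_{v'}=0$ and the unit $\eta\in\Cyc\binom{\diameter}{S_2}$. Given $\Sigma$-freeness, the operads are $\Sigma$-cofibrant in $\mathbf{V}$. I would then invoke the rectification theorem for $\Sigma$-cofibrant colored operads in cofibrantly generated symmetric monoidal model categories (Pavlov--Scholbach, or Haugseng/Hinich in the form compatible with the semi-model structures of \cite{white_smith_2024}). This identifies the underlying $\infty$-category of the semi-model category $\Alg_P(\mathbf{V})$ with $\Alg_P(\mathbf{V}_\infty)$.

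For compatibility of the derived functor with $\infty$-categorical restriction, I would use the following facts. The right Quillen functor preserves all weak equivalences, since these are detected on underlying objects in $\mathbf{V}$. Its right derived functor is therefore just the induced functor on localizations. The square of comparison functors commutes by naturality of postcomposition with $\mathbf{V}^{\t}\to\mathbf{V}_\infty^{\t}$ in the source operad.

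I expect the main obstacle to be the rectification step. Its hypotheses vary across the literature: some versions need symmetric flatness or admissibility assumptions, some only give semi-model structures, and some only apply in restricted settings. I would first try to match exactly the hypotheses under which \cite{white_smith_2024} produces the semi-model structures, and check that $\Sigma$-freeness is what makes the comparison an equivalence there. If that fails in full generality, the fallback is to restrict to the cofibrant-algebra subcategories and compute the derived mapping spaces directly, using free resolutions. For $\Sigma$-free operads, the free algebra functor on cofibrant objects computes the $\infty$-categorical free algebra.
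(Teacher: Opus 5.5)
You are following the paper's route. The paper's proof simply combines Corollary~\ref{cor:cyc_opd_mod_infty} with the rectification theorem \cite[Theorem 7.3.1]{white_smith_2024}, using cofibrancy of the unit to get $\Sigma$-cofibrancy. Your extra bookkeeping is fine: restriction preserves all weak equivalences, so its total derived functor is the induced functor on localizations, and it matches restriction of $\mathbf{V}_\infty$-valued algebras.

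The gap is the combinatorial input you single out: $\Cyc$ and $\OpdMod^{\pt}$ are \emph{not} $\Sigma$-free. Your rigidity argument needs a starting point that the automorphism is forced to fix, namely a labelled leaf, the root, or the unique $S$-coloured vertex; fixedness then propagates through the tree. When the output is $S_0$ there are no leaves. In that case, unless there is a distinguished $S$-coloured input, an automorphism flipping a central edge can respect every labelling. Concretely:
\begin{itemize}
\item $\Cyc\binom{S_1,S_1}{S_0}$ is a single point (two vertices joined by an edge), and the transposition of the inputs acts trivially on it.
\item $\OpdMod^{\pt}\binom{C_0,C_0}{S_0}$ is likewise a point with trivial $\Sigma_2$-action. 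It encodes $O(0)\otimes O(0)\to M(0)$, built from the $\Sigma_2$-invariant pointing.
\end{itemize}
The degenerate cases you planned to check (stumps and $\eta$) are harmless. The bad cases are leafless trees with at least two vertices.

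Consequently both operads have, at the profile $(S_1,S_1;S_0)$, the unit with trivial $\Sigma_2$-action as a component. This is not projectively cofibrant in general (e.g.\ in $\mathrm{sSet}$ or $\mathrm{Ch}(\mathbb{F}_2)$). So neither the rectification theorem nor your fallback applies, since the fallback also presupposes $\Sigma$-freeness.

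This is not cosmetic. Over $\mathbb{F}_2$, the free $\Cyc$-algebra on an acyclic disk $D$ in colour $S_1$ is $(D\otimes D)_{\Sigma_2}$ in colour $S_0$, and this has nonzero homology. Hence the free map on the generating acyclic cofibration $0\to D$ is not a weak equivalence. The transferred semi-model structure on $\Alg_{\Cyc}(\mathbf{V})$ therefore does not exist for $\mathbf{V}=\mathrm{Ch}(\mathbb{F}_2)$. The paper's parenthetical $\Sigma$-cofibrancy claim has the same defect, so the problem lies in the statement as given, not only in your write-up.

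Your argument does work for the full suboperads without $S_0$, for instance those in the remark after the corollary, which drop $C_0,S_0,S_1$. There every multimorphism has a labelled leaf or a root, so your propagation argument gives $\Sigma$-freeness. The localization restricts to these suboperads: rerun the proof of Theorem~\ref{thm:cyc_opd_mod}, or apply Proposition~\ref{prop:univ_loc} to the pullback of a full suboperad inclusion. Alternatively, keep all colours but assume that for every finite group $G$, $G$-objects with cofibrant underlying object are projectively cofibrant. This holds over $\mathbb{Q}$, which is Willwacher's setting.
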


\begin{proof}
	This follows from Corollary \ref{cor:cyc_opd_mod_infty} and the rectification
	theorem of algebras over $\infty$-operads \cite[Theorem 7.3.1]{white_smith_2024}.
	(The unit object needs to be cofibrant to ensure that $\Cyc$ and $\OpdMod^{\pt}$
	are $\Sigma$-cofibrant in $\mathbf{V}$. But again, we can almost always force this to be true without changing the class of
	weak equivalences; see Remark \ref{rem:cofibrantunit}.)
\end{proof}

\begin{rem}
	Corollary \ref{cor:cyc_opd_mod_modelcats} remains valid, with the
	same proof, if we omit $C_{0}$ and $S_{0},S_{1}$ from the definitions
	of $\Cyc$ and $\OpdMod^{\pt}$. One then obtains the main theorem
	of \cite{willwacher_cyclic_2024} by slicing the right Quillen functor of Corollary
	\ref{cor:cyc_opd_mod_modelcats} by the unit cyclic operad.
\end{rem}

We now turn to the proof of Theorem \ref{thm:cyc_opd_mod}.

\begin{proof}
	[Proof of Theorem \ref{thm:cyc_opd_mod}]By Proposition \ref{prop:corolla-wise_criterion II}, it
	suffices to prove the following assertions:
	\begin{enumerate}
		\item The map $\phi\from U\pr{\OpdMod^{\pt}}\to U\pr{\Cyc}^{\simeq}$ is a weak
		homotopy equivalence.
		\item For each $m\geq1$, the map $\Alg_{C_{n}}\pr{\OpdMod^{\pt}}_{C_{m-1}}\to\Alg_{C_{n}}\pr{\Cyc}^{\simeq}_{S_{m}}$
		is a weak homotopy equivalence.
		\item For each $m\geq0$, the map $\Alg_{C_{n}}\pr{\OpdMod^{\pt}}_{S_{m}}\to\Alg_{C_{n}}\pr{\Cyc}^{\simeq}_{S_{m}}$
		is a weak homotopy equivalence.
	\end{enumerate}
	
	We start with (1). For each $m\geq0$, let $\cat X_{m}\subset U\pr{\OpdMod^{\pt}}$
	denote the full subcategory spanned by the objects $C_{m-1}$ and
	$S_{m}$ (when $m=0$, we only consider $S_{m}$), and let $\cat Y_{m}\subset U\pr{\Cyc}^{\simeq}$
	denote the full subcategory spanned by the object $S_{m}$. Then $U\pr{\OpdMod^{\pt}}=\coprod_{m\geq0}\cat X_{m}$
	and $U\pr{\Cyc}^{\simeq}=\coprod_{m\geq0}\cat Y_{m}$, so it suffices
	to show that the restriction $\phi_{m}\from\cat X_{m}\to\cat Y_{m}$
	of $\phi$ is a weak homotopy equivalence. We can identify $\cat Y_{m}$
	with a full subcategory of $\cat X_{m}$, and $\phi_{m}$ is a retraction
	to the inclusion. Moreover, the composite $\cat X_{m}\xrightarrow{\phi_{m}}\cat Y_{m}\hookrightarrow\cat X_{m}$
	admits a natural transformation from the identity functor. Hence $\phi_{m}$
	is a weak homotopy equivalence, as desired.
	
	For part (2), let $\mathrm{Tree}\pr{n,m}$ denote the set of equivalence
	classes of unrooted trees $T$ equipped with bijections $\Vert\pr T\cong\{1,\dots,n\}$
	and $\Leaf\pr T\cong\{1,\dots,m\}$. Two such trees are regarded as
	equivalent if there is an isomorphism of unrooted trees between them that commutes
	with these bijections. We then observe that the map
	\[
		\pi_{0}\pr{\Alg_{C_{n}}\pr{\OpdMod^{\pt}}_{C_{m-1}}}\longrightarrow\pi_{0}\pr{\Alg_{C_{n}}\pr{\Cyc}^{\simeq}_{S_{m}}}
	\]
	can be identified with the identity map of $\mathrm{Tree}\pr{n,m}$.
	Since the components of $\Alg_{C_{n}}\pr{\OpdMod^{\pt}}_{C_{m-1}}$ and
	$\Alg_{C_{n}}\pr{\Cyc}^{\simeq}_{S_{m}}$ are contractible groupoids,
this shows that $\psi$ is a weak homotopy equivalence (or even an
equivalence of categories).

Finally, we prove (3). For each element $[T]\in\mathrm{Tree}\pr{n,m}$
represented by $T$, let $P_{[T]}$ denote the poset whose elements
are the edges and vertices of $T$. The relation is generated by $v<e$,
where $e$ is an edge of $T$ containing $v$. We then observe that
$\Alg_{C_{n}}\pr{\OpdMod^{\pt}}_{S_{m}}$ and $\Alg_{C_{n}}\pr{\Cyc}^{\simeq}_{S_{m}}$
are thin categories (i.e., there is at most one morphism from one
object to another), whose skeletons can be identified with $\coprod_{ \mathrm{Tree}\pr{n,m}}P^{\op}_{[T]}$
and $\mathrm{Tree}\pr{n,m}$. Thus, we are reduced to showing that the projection
\[
\coprod_{[T]\in\mathrm{Tree}\pr{n,m}}P^{\op}_{[T]}\to\mathrm{Tree}\pr{n,m}.
\]
is a weak homotopy equivalence. But this follows from the observation that the geometric
realization of $P_{[T]}$ matches exactly with the picture of $T$
we depict, which is contractible by the definition of unrooted trees.
\end{proof}

\appendix
\section{Definition of cyclic operads and operadic modules}
\label{sec:cyc_mod_def}
The definitions of cyclic operads and operadic modules are well-known
to experts, but it is difficult to find a single reference containing
both definitions in the way that illuminates its connection with (un)rooted 
trees. Moreover, the definitions are not entirely standardized. (For
example, some authors do not consider nullary operations for cyclic operads.)
For these reasons, we collect in this section the definitions.
\begin{notation}
	We write $\FB$ for the groupoid of finite sets and bijections. We also write $\underline{n}=\{1,\dots ,n\}.$
\end{notation}
\begin{notation}
	Given a pair of finite set $X,Y$ and an element $(x,y)\in X\times Y$, we write $X\ps{\amalg}xyY= (X\setminus \{x\}) \amalg (Y\setminus \{y\})$. 
\end{notation}

\begin{defn}
\cite[Definition 1.3.2]{lukacs_cyclic_2010}\label{def:cycopd} Let $(\mathbf{C}, \otimes , I)$
be a symmetric monoidal category. A \emph{cyclic collection} in $\mathbf{C}$ is a functor $C\from\FB^{\op}\to\mathbf{C}$. A cyclic
operad is a cyclic collection $C\from\FB^{\op}\to\mathbf{C}$ equipped
with maps
\[
\ps{\circ}xy\from C\pr X\otimes C\pr Y\longrightarrow C\pr{X\ps{\amalg}xyY}
\]
for each $X,Y\in\FB$ and $\pr{x,y}\in X\times Y$, satisfying the
following properties: 
\begin{itemize}
\item (\textbf{Associativity}) The following diagrams commute: 
\[\begin{tikzcd}
		{C(X)\otimes C(Y)\otimes C(Z)} &[6mm] {C(X\ps{\amalg}{x}{y}Y)\otimes C(Z)} \\
	{C(X)\otimes C(Y\ps{\amalg}{\bar{y}}{z}Z)} & {C(X\ps{\amalg}{x}{y}Y\ps{\amalg}{\bar{y}}{z}Z)} \\
	{C(X)\otimes C(Y)\otimes C(Z)} & {C(X\ps{\amalg}{x}{y}Y)\otimes C(Z)} \\
	{C(X)\otimes C(Z)\otimes C(Y)} \\
	{C(X\ps{\amalg}{\bar{x}}{z}Z)\otimes C(Y)} & {C(X\ps{\amalg}{x}{y}Y\ps{\amalg}{\bar{x}}{z}Z)}
	\arrow[from=1-1, to=1-2,"\ps{\circ}{x}{y}\otimes \id"]
	\arrow[from=1-1, to=2-1,"\id\otimes \ps{\circ}{\bar{y}}{z}"']
	\arrow[from=1-2, to=2-2,"\ps{\circ}{\bar{y}}{z}"]
	\arrow[from=2-1, to=2-2,"\ps{\circ}{x}{y}"']
	\arrow[from=3-1, to=3-2,"\ps{\circ}{x}{y}\otimes \id"]
	\arrow["\cong" sloped, from=3-1, to=4-1]
	\arrow[from=3-2, to=5-2,"\ps{\circ}{\bar{x}}{z}"]
	\arrow[from=4-1, to=5-1,"\ps{\circ}{\bar{x}}{z}\otimes \id"']
	\arrow[from=5-1, to=5-2, "\ps{\circ}{x}{y}"']
\end{tikzcd}\]
where $x,\bar{x}\in X$, $y,\bar{y}\in Y$, $z\in Z$, and $x\neq \bar{x}$ and $y\neq \bar{y}$.
\item (\textbf{Equivariancy}) For any $x\in X$, $y\in Y$, and any maps
$\sigma\from X\to X'$ and $\tau\from Y\to Y'$ in $\FB$, the diagram
\[\begin{tikzcd}
	{C(X')\otimes C(Y')} &[6mm] {C(X'\ps{\amalg}{\sigma(x)}{\tau(y)}Y')} \\
	{C(X)\otimes C(Y)} & {C(X\ps{\amalg}xyY)}
	\arrow[from=1-1, to=1-2,"\ps{\circ}{\sigma(x)}{\tau(y)}"]
	\arrow[from=1-1, to=2-1,"\sigma^*\otimes\tau^*"']
	\arrow[from=1-2, to=2-2,"(\sigma\ps{\circ}xy\tau)^*"]
	\arrow[from=2-1, to=2-2,"\ps{\circ}{x}{y}"']
\end{tikzcd}\]commutes. 
\item (\textbf{Unitality}) For each set $U=\{u_{0},u_{1}\}$ with two elements,
there is a map $\eta_{U}\from I\to P\pr U$ with the following properties:
For every $X\in\FB$, $x\in X$, and $i\in\{0,1\}$, the diagram 
\[\begin{tikzcd}
	{I\otimes C(X)} &[3mm] {C(U)\otimes C(X)} & {C(X)\otimes C(U)} &[3mm] {C(X)\otimes I} \\
	& {C(X)} & {C(X)}
	\arrow[from=1-1, to=1-2,"\eta_U\otimes \id"]
	\arrow["\cong"' sloped, from=1-1, to=2-2]
	\arrow["{\ps{\circ}{u_i\!}{x}}", from=1-2, to=2-2]
	\arrow["{\ps{\circ}{x}{u_i}}"', from=1-3, to=2-3]
	\arrow[from=1-4, to=1-3,"\id\otimes\eta_U"']
	\arrow["\cong"' sloped, from=1-4, to=2-3]
\end{tikzcd}\]commutes. Moreover, for any map $U\to U'$ in $\FB$, the diagram
\[\begin{tikzcd}
	& I & \\
	{C(U')} && {C(U)}
	\arrow[from=1-2, to=2-1,"\eta_{U'}"']
	\arrow[from=1-2, to=2-3,"\eta_U"]
	\arrow["\cong"', from=2-1, to=2-3]
\end{tikzcd}\]commutes.
\end{itemize}
\emph{Morphisms of cyclic operads} are natural transformations of
underlying cyclic collections that respect the unit and composition
maps.
\end{defn}

\begin{rem}\label{rem: ordinary operad from cyclic one} 
	Recall that a monochromatic operad can be seen as a functor $\FB^{\op}\to\mathbf{C}$
equipped with composition maps and unit maps. (See, e.g., \cite[3.1.1]{fresse_modules_2009}.)
Every cyclic operad $C$ has an underlying monochromatic operad $C^{\mathrm{opd}}$,
defined by 
\[
C^{\mathrm{opd}}\pr S=C\pr{S_{+}},
\]
where $S_{+}=S\amalg\{+\}$ denotes the set $S$ with a newly added
element $+$. The composition map $\circ_{s}\from C^{\mathrm{opd}}\pr S\otimes C^{\opd}\pr T\to C^{\opd}\pr{S\setminus\{s\}\amalg T}$
is simply the map $_{s}\circ_{+}$, and the unit map $I\to C^{\opd}\pr{\underline{1}}=C\pr{\underline{1}\!\,_{+}}$.

From the perspective of cyclic operads, it might be more natural to
use the category $\FB_{\ast}$ of finite pointed sets and pointed
bijections for monochromatic operads. Under this point of view, the functor $\pr -^{\opd}$ is induced by precomposition with the forgetful functor $\FB_{\ast}\to\FB$. The connection between both descriptions is simply due to the equivalence of categories $\pr -_{+}\from\FB\xrightarrow{\simeq}\FB_{\ast}$. This
is the approach adopted in \cite{lukacs_cyclic_2010}.
\end{rem}

\begin{defn}
\cite[5.1.1]{fresse_modules_2009}\label{def:opdmod} Let $O$ be a (monochromatic)
operad in a symmetric monoidal category $\mathbf{C}$ (seen as a functor $O\colon \FB^{\op}\to \mathbf{C}$). A (right) \emph{module
over} $O$, or an \emph{$O$-module}, is a functor $M\from\FB^{\op}\to\mathbf{C}$
equipped with maps
\[
 M\pr T \otimes \pr{\bigotimes_{t\in T}O\pr{S_{t}}}\longrightarrow M\pr S
\]
for each map $S\to T$ of finite sets with fibers $\{S_{t}\}_{j\in J}$,
satisfying the following axioms:
\begin{itemize}
\item (\textbf{Associativity}) For every pair of set maps $S\to T\to U$,
the diagram
\[\begin{tikzcd}
	{M(U)\otimes \left(\bigotimes_{u\in U}O(T_u)\right)}\otimes \left(\bigotimes_{t\in T}O(S_t)\right) & {M(T)\otimes\left(\bigotimes_{t\in T}O(S_t)\right)} \\
	{M(U)\otimes \left(\bigotimes_{u\in U}O(S_u)\right)} & {M(S)}
	\arrow[from=1-1, to=1-2]
	\arrow[from=1-1, to=2-1]
	\arrow[from=1-2, to=2-2]
	\arrow[from=2-1, to=2-2]
\end{tikzcd}\]commutes.
\item (\textbf{Equivariancy}) Suppose we are given a commutative diagram
\[\begin{tikzcd}
	S & {S'} \\
	T & {T'}
	\arrow["\cong", from=1-1, to=1-2]
	\arrow[from=1-1, to=2-1]
	\arrow[from=1-2, to=2-2]
	\arrow["\cong"', from=2-1, to=2-2]
\end{tikzcd}\]of finite sets, where the horizontal arrows are bijections. Then the
diagram 
\[\begin{tikzcd}
	{ M(T')\otimes\left(\bigotimes_{t'\in T'}O(S'_{t'})\right)} & { M(T)\otimes\left(\bigotimes_{t\in T}O(S_{t})\right)} \\
	{M(S')} & {M(S)}
	\arrow["\cong", from=1-1, to=1-2]
	\arrow[from=1-1, to=2-1]
	\arrow[from=1-2, to=2-2]
	\arrow["\cong"', from=2-1, to=2-2]
\end{tikzcd}\]commutes.
\item (\textbf{Unitality}) For every finite set $S$, the diagram 
\[\begin{tikzcd}
	{M(S)\otimes I^{\otimes S}} & {M(S)\otimes\left(\bigotimes_{s\in S }O(\{s\})\right)} \\
	& {M(S)}
	\arrow[from=1-1, to=1-2]
	\arrow["\cong"' sloped, from=1-1, to=2-2]
	\arrow[from=1-2, to=2-2]
\end{tikzcd}\]commutes.
\end{itemize}
\emph{Morphisms of $O$-modules} are maps between the underlying
symmetric collections that respect the action of $O$.
\end{defn}

An $O$-module $M$ is said to be \emph{pointed} if, for any set $U$ with two elements, it comes equipped
with a map $\eta_{U}\colon I\to M(U)$ with the following property: For any map $U\to U'$ in $\FB$, the following triangle commutes
\[\begin{tikzcd}
	& I & \\
	{M(U')} && {M(U).}
	\arrow[from=1-2, to=2-1, "\eta_{U'}"']
	\arrow[from=1-2, to=2-3, "\eta_{U}"]
	\arrow["\cong"', from=2-1, to=2-3]
\end{tikzcd}.\]
In other words, $M$ comes with a preferred element in $M(2)^{\Sigma_2}$. \emph{Morphisms of pointed modules} are additionally required to preserve the pointing.

\begin{example}
Let $C$ be a cyclic operad in a symmetric monoidal category $\mathbf{C}$.
Then, as explained in Remark \ref{rem: ordinary operad from cyclic one}, we can extract an operad $C^{\opd}$ from $C$ whose value over a finite set $S$ is  
$
C^{\opd}\pr S=C\pr{S_{+}},
$
where $S_{+}$ denotes the set $S$ with a disjoint element.

We can also define a $C^{\opd}$-module $C^{\mod}$ by setting $C^{\mod}\pr S=C\pr S$.
This module is canonically pointed by the unit of $C$.
\end{example}

\begin{example}
Let $O$ be an operad in a symmetric monoidal category $\mathbf{C}$,
and let $M$ be a pointed $O$-module. Suppose that, for each finite
set $S$, the map
\[
O\pr S\cong I\otimes O\pr S \xrightarrow{\;\eta\otimes\id\;} M\pr{\underline{1}\!\,_{+}}\otimes O\pr S\xrightarrow{\;\phantom{\eta\otimes \id}\;} M\pr{S_{+}}
\]
is an isomorphism, where the second map is induced by the map $S\to\underline{1}\hookrightarrow\underline{1}\,\!_{+}$.
Then there is no distinction between inputs and outputs of $O$, because
$M\pr{S_{+}}$ only knows ``inputs''. This suggests that the pair
$\pr{O,M}$ comes from a cyclic operad, and Theorem \ref{thm:cyc_opd_mod}
confirms this intuition.
\end{example}

\section{Evaluation at the root edge is a cocartesian fibration}
\label{sec:ev_root}

The goal of this section is to prove that for an $\infty$-operad $\cat O$, the evaluation at the root edge gives a cocartesian fibration
\[
	\Alg_{C_n}(\cat O)\longrightarrow  U\cat O,
\]
and the dual statement for the evaluation at the leaves (Proposition \ref{prop:root_cc}).

These results are so basic that proper proofs most likely need a concrete model of $\infty$-operads. For this reason, we will use  Lurie's model of $\infty$-operads throughout this section.
In accordance with the notation in \cite{lurie_higher_nodate}, typical $\infty$-operads will be denoted with a tensor symbol on it, like $\cat{O}^\t$, and the underlying $\infty$-category will be denoted by dropping the tensor. Thus $\cat O=  U\pr{\cat{O}^{\t}}=\cat{O}^{\t}_{\langle 1\rangle}$.

We start with a precise statement of the main result of this section.
\begin{prop}
\label{prop:root_cc}Let $\cat O^{\t}$ be an $\infty$-operad, and
let $n\geq0$. Consider the functors
\begin{align*}
	\ev_{\mathrm{root}}\from\Alg_{C_{n}}\pr{\cat O} & \longrightarrow\cat O,\\
	\ev_{\mathrm{leaves}}\from \Alg_{C_{n}}\pr{\cat O} & \longrightarrow\cat O^{\times n},
\end{align*}
determined by the evaluation at the root and leaves. The following holds:
\begin{enumerate}
    \item The functor $\ev_{\mathrm{root}}$ is a cocartesian fibration, and the cocartesian morphisms are those whose images under $\ev_{\mathrm{leaves}}$ are equivalences.
    \item The functor $\ev_\mathrm{leaves}$ is a cartesian fibration, and the cartesian morphisms are those whose images under $\ev_{\mathrm{root}}$ are equivalences.
\end{enumerate}

\end{prop}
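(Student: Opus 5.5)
We model $\Alg_{C_n}(\cat O)$ in Lurie's framework as an explicit pullback, and then read both fibration properties off a single arrow category.

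\emph{The model.} An operad map $C_n\to\cat O^{\t}$ is determined by an active morphism $(x_1,\dots,x_n)\to y$ of $\cat O^{\t}$ lying over the active map $\langle n\rangle\to\langle 1\rangle$. Let $\cat A\subset\Fun([1],\cat O^{\t}_{\act})$ be the full subcategory of arrows lying over $\langle n\rangle\to\langle 1\rangle$. This is a fiber of $\Fun([1],\cat O^{\t}_{\act})\to\Fun([1],\Fin_\ast)$, so it is a genuine $\infty$-category. We first identify
\[
\Alg_{C_n}(\cat O)\simeq \cat A\times_{\cat O^{\t}_{\langle n\rangle}}\cat O^{\times n}.
\]
On the right we pull back along the Segal equivalence $\cat O^{\t}_{\langle n\rangle}\simeq\cat O^{\times n}$, and the relevant morphisms are natural transformations lying over the identity of $\langle n\rangle\to\langle 1\rangle$. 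Under this identification $\ev_{\mathrm{root}}$ is $\ev_1$ and $\ev_{\mathrm{leaves}}$ is $\ev_0$. To justify the identification, we use the description of algebras over $C_n$ via the operadic nerve of the free operad on a corolla, or equivalently Proposition \ref{prop:C_n_model}, which the paper already invokes.

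\emph{Part (1).} The plan is to show that $\ev_1\from\cat A\to\cat O^{\t}_{\langle 1\rangle}=\cat O$ is a cocartesian fibration, with cocartesian edges those whose $\ev_0$-component is an equivalence. There is a general fact: for an $\infty$-category $\cat E$, the target map $\Fun([1],\cat E)\to\cat E$ is a cocartesian fibration, and an edge is cocartesian exactly when its source component is an equivalence. We restrict this fact to arrows lying over fixed maps in $\Fin_\ast$. Given $f\from(x_i)\to y$ and $g\from y\to y'$ in $\cat O$, the candidate cocartesian lift is the square with source $\id$ and target $g$, ending at $g\circ f$. This composite is again active over $\langle n\rangle\to\langle 1\rangle$, because an inert--active factorization is not needed here: we only compose active maps. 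To check the universal property, we note that mapping spaces in $\cat A$ are pullbacks of mapping spaces in $\cat O^{\t}_{\act}$, and the fiber condition over $\Fin_\ast$ is preserved since all the maps lie over identities. The converse characterization of cocartesian edges then follows from uniqueness of cocartesian lifts up to equivalence.

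\emph{Part (2).} Dually, $\ev_0$ is a cartesian fibration: a morphism $(a_i)\from(x_i')\to(x_i)$ in $\cat O^{\times n}$ lifts, by precomposition, to $f\circ(a_1\oplus\dots\oplus a_n)$. Here $\oplus$ is the Segal product of unary maps, a map in $\cat O^{\t}_{\langle n\rangle}$ lying over $\id_{\langle n\rangle}$, which is active. The edges that are cartesian for the source projection are exactly those whose target component is an equivalence.

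\emph{Main obstacle.} The hardest step is the first one: rigorously identifying $\Alg_{C_n}(\cat O)$ with this arrow-category model, including its functoriality in $\cat O$. In particular, we must check that maps of $C_n$-algebras in $\cat O^{\t}$ correspond to squares over the identity of the active map, and not to more general squares. I would first try to do this by writing $C_n^{\t}$ as the operadic nerve of the corolla and computing $\Fun^{\mathrm{lax}}_{\Fin_\ast}(C_n^{\t},\cat O^{\t})$ via right Kan extension from its active-edge subcategory.
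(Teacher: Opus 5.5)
Your proposal is correct and follows essentially the paper's route. Both arguments use Proposition~\ref{prop:C_n_model} to reduce to the fiber $\operatorname{Fun}_{\mathrm{Fin}_\ast}(([1],\mu_n),\mathcal{O}^{\otimes})$ of $\operatorname{Fun}([1],\mathcal{O}^{\otimes})\to\operatorname{Fun}([1],\mathrm{Fin}_\ast)$ over $\mu_n$, whose morphisms lie over $\mathrm{id}_{\mu_n}$; as you note yourself, this is the fiber and not a full subcategory. Both then restrict the standard cocartesian and cartesian fibration structures of the target and source maps of the arrow category to this fiber. The only difference is how that restriction is justified: the paper does it formally through the pullback-cube Lemma~\ref{lem:pb}, while you check the universal property directly from the pullback formula for mapping spaces in the fiber, which works equally well.
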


\begin{rem}\label{rem:restr_at_root}
	Let $(\cO^\otimes,\cW)$ be a relative $\infty$-operad. Since $\cat {W}$ contains all equivalences, Proposition \ref{prop:root_cc} implies that the functor
	\[
\ev_{\mathrm{root}}\from\Alg_{C_{n}}\pr{\cat O}\!\,^{\cat W} \longrightarrow\cW
	\]
is a cocartesian fibration, and that the functor
\[
\ev_{\mathrm{leaves}}\from \Alg_{C_{n}}\pr{\cat O}\!\,^{\cat W} \longrightarrow\cW^{\times n}
\]
is a cartesian fibration.
\end{rem}

The proof of Proposition
\ref{prop:root_cc} needs a few preliminaries.
\begin{notation}
Let $n\geq0$. We write $C^{\t}_{n}$ for the $\infty$-operad corresponding
to the free operad on $C_{n}$. Explicitly, it is the category whose
objects are the finite (possibly empty) tuples $\pr{i_{1},\dots,i_{k}}$,
where $i_{1},\dots,i_{k}\in\{1,\dots,n,n+1\}$. A morphism $\pr{i_{1},\dots,i_{k}}\to\pr{j_{1},\dots,j_{l}}$
is given by a pointed map $\alpha\from\inp k\to\inp l$ such that,
for each $a\in\inp l^{\circ}$, one of the following conditions hold:
\begin{itemize}
\item We have $\abs{\alpha^{-1}\pr a}=n$, $\{i_{t}\mid t\in\alpha^{-1}\pr a\}=\{1,\dots,n\}$,
and $j_{a}=n+1$.
\item We have $\abs{\alpha^{-1}\pr a}=1$ and $i_{\alpha^{-1}\pr a}=j_{a}$.
\end{itemize}
\end{notation}

\begin{notation}
We write $\cat X\subset C^{\t}_{n}$ for the full subcategory of $C^{\t}_{n}$
spanned by the objects $\pr{1,\dots,n},1,\dots,n,n+1$. Equivalently,
$\cat X$ is the poset depicted as

\[\begin{tikzcd}
	1 & \cdots & n \\
	& {(1,\dots,n)} \\
	& {n+1}.
	\arrow[from=2-2, to=1-1]
	\arrow[from=2-2, to=1-3]
	\arrow[from=2-2, to=3-2]
\end{tikzcd}\]
We also write $\cat X_{0}\subset\cat X$ for the full subcategory
spanned by the objects other than $n+1$.
\end{notation}
\begin{defn}
    Let $\cat O^{\t}$ be an $\infty$-operad. A functor $F\from\cat X\to\cat O^{\t}$
over $\Fin_{\ast}$ is called an \emph{$\cat X$-algebra} if it carries the
maps $\{\pr{1,\dots,n}\to a\}_{1\leq a\leq n}$ to inert maps. We
write $\Alg_{\cat X}\pr{\cat O}\subset\Fun_{\Fin_{\ast}}\pr{\cat X,\cat O^{\t}}$
for the full subcategory spanned by the $\cat X$-algebras. 
\end{defn}
\begin{notation}
	For each $n\geq 0$, we will write $\mu_n\from \inp{n} \to \inp{1}$ for the unique active morphism in $\Fin_\ast$. We also write $([1],{\mu_n})$ to mean the category $[1]$ equipped with the functor $[1]\to \Fin_\ast$ corresponding to the morphism $\mu_n$. 
\end{notation}

\begin{prop}
\label{prop:C_n_model}Let $n\geq0$. Consider the fully faithful
functors
\[
[1]\longrightarrow\cat X\longrightarrow C^{\t}_{n},
\]
where the left map corresponds to the active morphism $\pr{1,\dots,n}\to n+1$. For
every $\infty$-operad $p\from\cat O^{\t}\to\Fin_{\ast}$, precomposing
with these functors induces trivial Kan fibrations
\[
\Alg_{C_{n}}\pr{\cat O}\xrightarrow[\;\;\phi\;\;]{\simeq}\Alg_{\cat X}\pr{\cat O}\xrightarrow[\;\;\psi\;\;]{\simeq}\Fun_{\Fin_{\ast}}\pr{( [1],\mu_n ),\cat O}.
\]
\end{prop}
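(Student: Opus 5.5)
We treat the two restriction maps separately; both should follow from the standard theory of operadic left Kan extensions in Lurie's framework, \cite[\S 3.1.2--3.1.3]{lurie_higher_nodate}.

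\textbf{The map $\psi$.} We first identify $\Fun_{\Fin_\ast}(([1],\mu_n),\cat O^{\t})$ with the full subcategory of $\Fun_{\Fin_\ast}(\cat X,\cat O^{\t})$ spanned by functors $F$ that are $p$-right Kan extensions of their restriction along $[1]\hookrightarrow\cat X$. For $a\in\{1,\dots,n\}$, the relevant comma category $(a\downarrow[1])$ has as its only object $(1,\dots,n)\to a$, the inert map $\rho^a$. A $p$-right Kan extension at $a$ therefore amounts to choosing a $p$-cocartesian lift of $\rho^a$ starting at $F(1,\dots,n)$. Since $\cat O^{\t}$ is an $\infty$-operad, inert morphisms admit cocartesian lifts. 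Hence every functor on $([1],\mu_n)$ admits such an extension, and the extensions are exactly the functors carrying $(1,\dots,n)\to a$ to inert maps, i.e.\ the $\cat X$-algebras. By \cite[Proposition 4.3.2.15]{lurie_higher-topos_2009} (relative Kan extensions), $\psi$ is then a trivial Kan fibration. One detail to check: $\cat X$ and $[1]$ are posets over $\Fin_\ast$, so no homotopy-coherence subtleties arise.

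\textbf{The map $\phi$.} Here we compare $C^{\t}_n$ with $\cat X$. An object of $C_n^{\t}$ is a tuple $(i_1,\dots,i_k)$. An algebra $C_n^{\t}\to\cat O^{\t}$ sends each tuple to the object lying over $\langle k\rangle$ that corresponds, via the Segal condition, to the list $(F(i_1),\dots,F(i_k))$. Such an algebra is a $p$-right Kan extension of its restriction to the subcategory of singletons $\{1,\dots,n+1\}$ together with the inert maps out of each tuple; this is the content of the characterization of algebras in \cite[Proposition 2.1.2.x / 3.1.3.x]{lurie_higher_nodate}. The remaining information is the active morphism $(1,\dots,n)\to n+1$, and every active morphism in $C_n^{\t}$ is obtained from it by juxtaposing identities.

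Accordingly, we factor $\cat X\subset\cat X'\subset C^{\t}_n$, where $\cat X'$ adds all tuples together with the inert maps between them. We then show in two steps that restriction along each inclusion is a trivial fibration:
\begin{enumerate}
\item[(a)] $\cat X'$-algebras are exactly the right Kan extensions from $\cat X$, by the argument used for $\psi$.
\item[(b)] Algebras on $C^{\t}_n$ are exactly left Kan extensions from $\cat X'$ along active morphisms. Concretely, the value on an active map $(\dots,1,\dots,n,\dots)\to(\dots,n+1,\dots)$ is forced to be the product, in the sense of \cite[Proposition 2.1.2.12]{lurie_higher_nodate}, of $F(\mu)$ with identities.
\end{enumerate}

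Alternatively, and perhaps more cleanly, for (b) we can invoke \cite[Proposition 2.1.3.? / Theorem 2.3.3.23]{lurie_higher_nodate}: algebras over the operadic nerve of a free operad on a corolla are determined by their values on generators. Equivalently, $C^{\t}_n$ is the operadic envelope-type free $\infty$-operad on the map $([1],\mu_n)\to\Fin_\ast$, and $\Alg$ out of it is computed by $\Fun_{\Fin_\ast}(([1],\mu_n),-)$. In that formulation, $\psi\circ\phi$ is a trivial fibration by the universal property, and 2-out-of-3 gives the claim for $\phi$.

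The main obstacle is step (b). One must verify that the relevant comma categories for extending along active maps have terminal or contractible shapes, so that the existence and uniqueness criteria for relative Kan extensions apply. One must also check that the values produced this way again preserve inert morphisms, so that they really are algebras.
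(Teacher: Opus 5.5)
\textbf{Part $\psi$.} Your argument for $\psi$ is the paper's, apart from a direction slip. The object $(1,\dots,n)\to a$ lies in $[1]\times_{\cat X}\cat X_{/a}$, and a $p$-cocartesian lift of $\rho^a$ is a $p$-colimit. So what you describe is a $p$-\emph{left} Kan extension along $[1]\subset\cat X$, which is what the paper uses.

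\textbf{The gap is in $\phi$, at step (b).} You flag this step yourself but do not close it. Your $\cat X'$ contains every object of $C^{\t}_n$, so $\cat X'\subset C^{\t}_n$ is a wide, non-full inclusion. Extending along it is therefore not a problem covered by \cite[Proposition 4.3.2.15]{lurie_higher_2009}, which concerns full subcategories. A genuine left Kan extension along active maps would also need $p$-colimits that a general $\cat O^{\t}$ does not have.

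Your fallback does not rescue this. It says that $C^{\t}_n$ is free on $([1],\mu_n)$, so that algebras out of it are $\Fun_{\Fin_\ast}(([1],\mu_n),\cat O^{\t})$. That is exactly the statement that $\psi\circ\phi$ is a trivial fibration, i.e.\ what you are trying to prove. Moreover, \cite[Theorem 2.3.3.23]{lurie_higher_nodate} is Lurie's weak-approximation theorem, not a freeness result. Step (a) is also not ``the argument used for $\psi$'': it needs $p$-limits of inert maps and a cofinality check, not cocartesian lifts.

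\textbf{The missing idea.} Active morphisms need no separate treatment. Do a single $p$-right Kan extension along the full inclusion $\cat X\subset C^{\t}_n$. A right Kan extension determines the values on all morphisms, active ones included, through the universal property of the limit at the target.

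What you must verify is the following cofinality statement.
\begin{itemize}
\item Every non-inert map $(i_1,\dots,i_k)\to n+1$ factors uniquely as an inert map to $(1,\dots,n)$ followed by the active map.
\item Hence $\cat X\times_{C^{\t}_n}(C^{\t}_n)_{(i_1,\dots,i_k)/}$ has an initial full subcategory $\cat I$, spanned by the maps to $(1,\dots,n)$ and the inert maps to $1,\dots,n+1$.
\item Since $G|\cat X$ is an $\cat X$-algebra, $G|\cat I$ is right Kan extended from the discrete part $\cat I'$ consisting of the inert maps to singletons.
\end{itemize}
Existence of the extension then reduces to $p$-limits over inert maps, which exist because $\cat O^{\t}$ is an $\infty$-operad. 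The same analysis shows that $G$ is such an extension if and only if it preserves inert morphisms. One application of \cite[Proposition 4.3.2.15]{lurie_higher_2009} then gives that $\phi$ is a trivial fibration, with no intermediate $\cat X'$ needed.
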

\begin{proof}
We observe that a functor $F\from\cat X\to\cat O^{\t}$ over $\Fin_{\ast}$
is an $\cat X$-algebra if and only if it is $p$-left Kan extended
from $F\vert[1]$, and that every functor $([1],\mu_n)\to \cat{O}$ can be $p$-left Kan extended to $\cat{X}$.
It follows from \cite[Proposition 4.3.2.15]{lurie_higher_2009}
that $\psi$ is a trivial fibration. Consequently, it suffices to
show that the functor $\phi$ is a trivial fibration. For this, again
by \cite[Proposition 4.3.2.15]{lurie_higher_2009}, it suffices to prove the following
pair of assertions:
\begin{enumerate}
\item Every $\cat X$-algebra $F\from\cat X\to\cat O^{\t}$ over $\Fin_{\ast}$
admits a $p$-right Kan extension along $i$.
\item Let $G\from C^{\t}_{n}\to\cat O^{\t}$ be a functor such that $G\vert\cat X$
is an $\cat X$-algebra. Then $G$ is a $p$-right Kan extension of
$G\vert\cat X$ if and only if it is a map of $\infty$-operads.
\end{enumerate}

To prove (1), let $\pr{i_{1},\dots,i_{k}}\in C^{\t}_{n}$ be an arbitrary
object. The category $\cat X\times_{C^{\t}_{n}}\pr{C^{\t}_{n}}_{\pr{i_{1},\dots,i_{k}}/}$
contains an initial full subcategory $\cat I$, spanned by the following
objects:
\begin{itemize}
\item Maps $\pr{i_{1},\dots,i_{k}}\to\pr{1,\dots,n}$, if any.
\item Maps $\pr{i_{1},\dots,i_{k}}\to a$ for $a\in\{1,\dots,n\}$, if any. Note that these maps are necessarily inert.

\item \textit{inert} maps $\pr{i_{1},\dots,i_{k}}\to n+1$, if any. 
\end{itemize}
Let $\cat I'\subset\cat I$ denote the full subcategory spanned by
the objects that are \textit{not} of the form $\pr{i_{1},\dots,i_{k}}\to\pr{1,\dots,n}$.
Then $G\vert\cat I$ is $p$-right Kan extended from $G\vert\cat I'$.
Consequently, it suffices to show that $G\vert\cat I'$ admits a $p$-limit
diagram lying over the inert maps $\{\pr{i_{1},\dots,i_{k}}\to a\}_{a\in\{i_1,\dots ,i_k\}}$,
which is clear. A similar argument proves (2), and the proof is complete.
\end{proof}

For the next lemma, we use the following terminology: We say that
a functor $p\from \cat{E}\to \cat{B} $ of $\infty$-categories is \emph{essentially a cartesian
fibration} if there is a diagram in $\Catinfty$
\[\begin{tikzcd}
	{\mathcal{E}} & {\mathcal{E}'} \\
	{\mathcal{B}} & {\mathcal{B}',}
	\arrow["f", "\simeq"', from=1-1, to=1-2]
	\arrow["p"', from=1-1, to=2-1]
	\arrow["{p'}", from=1-2, to=2-2]
	\arrow["g"',"\simeq", from=2-1, to=2-2]
\end{tikzcd}\]
where $f$ and $g$ are equivalences and $p$ is a cartesian fibration of $\infty$-categories (quasicategories).
In this situation, we say that a morphism of $\cat{E}$ is \emph{$p$-cartesian} if its image in $\cat{E}'$ is $p'$-cartesian. 
\begin{lem}
\label{lem:pb}Consider the following diagram in $\curlyCatinfty$
\[\begin{tikzcd}
	& {\mathcal{B}_0} && {\mathcal{B}_1} \\
	{\mathcal{E}_0} && {\mathcal{E}_1} \\
	& {\mathcal{B}_2} && {\mathcal{B}_3.} \\
	{\mathcal{E}_2} && {\mathcal{E}_3}
	\arrow[from=1-2, to=1-4]
	\arrow[from=1-2, to=3-2]
	\arrow[from=1-4, to=3-4]
	\arrow["{p_0}", from=2-1, to=1-2]
	\arrow[from=2-1, to=2-3, crossing over]
	\arrow[from=2-1, to=4-1]
	\arrow["{p_1}", from=2-3, to=1-4]
	\arrow[from=3-2, to=3-4]
	\arrow["{p_2}", from=4-1, to=3-2]
	\arrow[from=4-1, to=4-3, "f_2"']
	\arrow["{p_3}", from=4-3, to=3-4]
	\arrow[from=2-3, to=4-3, crossing over, "f_1"{pos=0.2}]
\end{tikzcd}\]
Suppose that the following conditions hold:
\begin{enumerate}
	\item The functors $p_{1},p_{2},p_{3}$ are essentially cartesian fibrations.
	\item For each $i\in \{1,2\}$, the functor $f_i$ carries $p_i$-cartesian maps to $p_3$-cartesian maps.
	\item The front and the back faces are cartesian.
\end{enumerate}
Then $p_{0}$ is essentially a cartesian fibration, and $p_0$-cartesian morphisms are those maps whose images in $\cat{E}_i$ are $p_i$-cartesian for every $i\in\{1,2,3\}.$
\end{lem}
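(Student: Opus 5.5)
First I would reduce to a strict situation. Replace everything up to equivalence so that $p_1,p_2,p_3$ are honest cartesian fibrations of quasicategories. The cube can then be modelled by strict pullbacks of quasicategories along categorical fibrations. Concretely, I would first use the model structure on $\Fun$ of the cospan shape: the cospans $\cat E_1\to\cat E_3\leftarrow\cat E_2$ and $\cat B_1\to\cat B_3\leftarrow\cat B_2$ can be replaced by injectively fibrant ones, so that all legs are categorical fibrations. In the same step, $p_i$ becomes a map of cospans with each $p_i$ a cartesian fibration. Since $p_i$ is only \emph{essentially} cartesian, the replacement should be done in the category of arrows, that is, in the Reedy/injective structure on $\Fun([1]\times\Lambda^2_2,\mathrm{sSet}_{\mathrm{Joyal}})$. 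Cartesianness and the class of cartesian edges are invariant under equivalence of arrows, so conditions (1) and (2) survive the replacement. Condition (3) then says that $\cat E_0\simeq\cat E_1\times_{\cat E_3}\cat E_2$ and $\cat B_0\simeq\cat B_1\times_{\cat B_3}\cat B_2$, computed as strict pullbacks. It therefore suffices to treat $p_0\colon \cat E_1\times_{\cat E_3}\cat E_2\to\cat B_1\times_{\cat B_3}\cat B_2$ on the nose.

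Next I would factor $p_0$ through intermediate pullbacks. Set $\cat E_1'=\cat E_1\times_{\cat B_1}\cat B_0$ and similarly $\cat E_2'$, $\cat E_3'=\cat E_3\times_{\cat B_3}\cat B_0$. These are cartesian fibrations over $\cat B_0$, being pullbacks of cartesian fibrations, with cartesian edges detected in $\cat E_i$. The functors $f_i$ induce maps $\cat E_i'\to\cat E_3'$ over $\cat B_0$ that preserve cartesian edges by (2). Then $\cat E_0\cong\cat E_1'\times_{\cat E_3'}\cat E_2'$. This reduces the statement to a single-base lemma: for cartesian fibrations $q_i\colon \cat E_i'\to\cat B_0$ and functors over $\cat B_0$ preserving cartesian edges, the fibre product $\cat E_1'\times_{\cat E_3'}\cat E_2'$ is a cartesian fibration. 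In the point-set model I would additionally want $f_i$ to be fibrations, which is arranged by the replacement above.

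For the single-base lemma, I see two routes. One route is a direct lifting argument. Given a vertex $(x_1,x_2)$ over $b$ and an edge $b'\to b$, choose $q_1$-cartesian and $q_2$-cartesian lifts. Their images in $\cat E_3'$ are both $q_3$-cartesian lifts of the same edge with the same target, so they agree up to an equivalence in the fibre. Since $f_2$ is a categorical fibration, I can lift that equivalence to adjust the second lift so that the two images match strictly. This produces an edge of the pullback. I would then check that it is $p_0$-cartesian via the mapping-space criterion: mapping spaces in a pullback are pullbacks of mapping spaces, and the cartesian condition is a pullback condition on mapping spaces that commutes with these limits. The other route is via straightening: cartesian fibrations over $\cat B_0$ with cartesian-edge-preserving maps are equivalent to $\Fun(\cat B_0^{\op},\Catinfty)$, whose pullbacks are computed pointwise. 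This route only shows that the homotopy pullback in the subcategory agrees with that in $\Catinfty_{/\cat B_0}$, which again requires the mapping-space comparison. So I would go with the direct argument.

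The identification of the $p_0$-cartesian morphisms also falls out of the mapping-space computation. An edge is $p_0$-cartesian if its components are cartesian, and conversely any such edge is equivalent to the lift just constructed. The main obstacle is the first step: performing the simultaneous fibrant replacement without losing the pullback conditions or the cartesianness hypotheses. I would handle it by working entirely with homotopy pullbacks and using invariance of cartesian edges under equivalence.
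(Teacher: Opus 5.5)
Your proposal is correct. Its second step, base change along $\mathcal{B}_0\to\mathcal{B}_i$ so that the back face becomes constant and $\mathcal{E}_0\simeq\mathcal{E}_1'\times_{\mathcal{E}_3'}\mathcal{E}_2'$, is exactly the paper's first move. The two arguments diverge on the single-base statement.

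\textbf{The paper's route.} The paper settles the single-base statement in one line. The inclusion $\mathrm{Cart}(\mathcal{B})\subset(\mathrm{Cat}_\infty)_{/\mathcal{B}}$ of essentially cartesian fibrations and cartesian-edge-preserving functors is a right adjoint, so it preserves pullbacks. Its left adjoint is the free cartesian fibration $\mathcal{E}\mapsto\mathcal{E}\times_{\mathcal{B}}\mathrm{Fun}([1],\mathcal{B})$, pulled back along $\mathrm{ev}_1$ and projecting via $\mathrm{ev}_0$.

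This is precisely the input missing from your dismissal of the abstract route. Straightening shows that pullbacks exist in $\mathrm{Cart}(\mathcal{B}_0)$. Right-adjointness shows that they agree with pullbacks in $(\mathrm{Cat}_\infty)_{/\mathcal{B}_0}$. So no separate mapping-space comparison is needed there.

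\textbf{What your route buys.} Your direct argument is: strictify, build a lift by matching the two cartesian lifts in $\mathcal{E}_3'$ and correcting one of them, then verify the cartesian condition as a limit of pullback squares of mapping spaces. It is more elementary, and it yields the description of $p_0$-cartesian edges explicitly. The paper leaves that description implicit: the projections preserve cartesian edges, and the converse follows because cartesian lifts are unique up to fiberwise equivalence and equivalences in a pullback are detected componentwise.

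\textbf{The cost.} You pay with point-set bookkeeping. For the correction step you need the induced map $\mathcal{E}_2'\to\mathcal{E}_3'$ over $\mathcal{B}_0$ to be a categorical fibration. Equivalently, the relative matching map $\mathcal{E}_2\to\mathcal{E}_3\times_{\mathcal{B}_3}\mathcal{B}_2$ must be a fibration; it is not enough that $f_2$ is. Otherwise the lifted equivalence need not lie in the fiber over $b'$.

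Your Reedy/injective replacement on $[1]\times\Lambda^2_2$ does provide this, so the argument goes through. The paper's model-independent formulation, phrased directly in terms of essentially cartesian fibrations, avoids this bookkeeping altogether.
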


\begin{proof}
	Pulling back $\cat{E}_i$ along the maps $\cat B_{0}\to\cat B_{i}$, we may assume
that the back face is constant. In this case, the claim follows from
the fact that the inclusion of the subcategory $\Cart\pr{\cat B}\subset\pr{\curlyCatinfty}_{/\cat B}$
of essentially cartesian fibrations and maps preserving cartesian
maps preserves limits, being a right adjoint.
\end{proof}

\begin{proof}
[Proof of Proposition \ref{prop:root_cc}] We will prove (2); the
proof of part (1) is similar. The functor $\ev_{\mathrm{leaves}}$
is a categorical fibration, so it is a cartesian fibration if and
only if it is essentially a cartesian fibration. Now consider the
diagram:
\[\begin{tikzcd}
	{\operatorname{Alg}_{C_n}(\mathcal{O})} & {\operatorname{Alg}_{\mathcal{X}}(\mathcal{O})} & {\operatorname{Fun}_{\mathrm{Fin}_\ast}(( [1],\mu_n ),\mathcal{O}^{\otimes})} \\
	{\mathcal{O}^{\times n}} & {\operatorname{Fun}'_{\mathrm{Fin}_\ast}(\mathcal{X}_0,\mathcal{O}^{\otimes})} & {\mathcal{O}^{\otimes}_{\langle n \rangle}.}
	\arrow["\phi", "\simeq"', from=1-1, to=1-2]
	\arrow["{\mathrm{ev}_{\mathrm{leaves}}}"', from=1-1, to=2-1]
	\arrow["\psi", "\simeq"', from=1-2, to=1-3]
	\arrow["\rho", from=1-2, to=2-2]
	\arrow["{\mathrm{ev}_0}", from=1-3, to=2-3]
	\arrow["{\phi'}", "\simeq"', from=2-2, to=2-1]
	\arrow["{\psi'}"', "\simeq", from=2-2, to=2-3]
\end{tikzcd}\]
Here $\Fun'_{\Fin_{\ast}}\pr{\cat X_{0},\cat O^{\t}}\subset\Fun_{\Fin_{\ast}}\pr{\cat X_{0},\cat O^{\t}}$
denotes the full subcategory spanned by the functors carrying all
morphisms to inert maps, and $\rho$ denotes the restriction functor.
We have seen that $\phi$ and $\psi$ are equivalences in Proposition
\ref{prop:C_n_model}. Also, the functors $\phi'$ and $\psi'$ are
equivalences by \cite[Proposition 4.3.2.15]{lurie_higher_2009}. Thus, it suffices
to show that the functor
\[
\ev_{0}\from\Fun_{\Fin_{\ast}}\pr{( [1],\mu_n ),\cat O^{\t}}\longrightarrow\cat O^{\t}_{\inp n}
\]
is a cartesian fibration, and that its cartesian edges are the maps whose images under $\ev_1\from \Fun_{\Fin_\ast}(([1],\mu_n),\cat O^\t)\to \cat O$ are equivalences. 

To prove that $\ev_{0}$ is a cartesian fibration, we consider the
following diagram of $\infty$-categories
\[\begin{tikzcd}
	& {\mathcal{O}^{\otimes }_{\langle n\rangle}} && {\mathcal{O}^\otimes} \\
	{\operatorname{Fun}_{\mathrm{Fin}_\ast}(( [1],\mu_n ),\mathcal{O}^\otimes )} && {\operatorname{Fun}([1],\mathcal{O}^\otimes )} \\
	& {\{\langle n\rangle\}} && {\mathrm{Fin}_\ast,} \\
	{\{\mu_n\}} && {\operatorname{Fun}([1],\mathrm{Fin}_\ast )}
	\arrow[from=1-2, to=1-4]
	\arrow[from=1-2, to=3-2]
	\arrow[from=1-4, to=3-4]
	\arrow["{\operatorname{ev}_0}", from=2-1, to=1-2]
	\arrow[from=2-1, to=4-1]
	\arrow["{\operatorname{ev}_0}", from=2-3, to=1-4]
	\arrow[from=3-2, to=3-4]
	\arrow[from=4-1, to=3-2]
	\arrow[from=4-1, to=4-3]
	\arrow["{\operatorname{ev}_0}", from=4-3, to=3-4]
 	\arrow[from=2-3, to=4-3, crossing over]
	\arrow[from=2-1, to=2-3, crossing over]
\end{tikzcd}\]
where $\mu_{n}\from\inp n\to\inp 1$ denotes the unique active map.
We observe that the two slanted arrows on the right are both
cartesian fibrations \cite[\href{https://kerodon.net/tag/0478}{Tag 0478}]{lurie_kerodon_nodate}, whose cartesian edges are the maps whose images under $\ev_1$ are equivalences.
Moreover, the front and back faces are pullback in $\curlyCatinfty$.
Thus, the claim follows from Lemma \ref{lem:pb}.
\end{proof}

We conclude this section with another consequence of Proposition \ref{prop:C_n_model}, which we will use in Section \ref{sec:modelcats}.

\begin{cor}
\label{cor:C_n_pb}Let $\cat C^{\t}\to\Fin_{\ast}$ be a symmetric monoidal $\infty$-category.
For every $n\geq0$, the $\infty$-category $\Alg_{C_{n}}\pr{\cat C}$ fits into a pullback of the form
\[\begin{tikzcd}
	{\Alg_{C_n}(\cat{C})} & {\Fun([1],\cat{C})} \\
	{\cat{C}^{\times n}} & {\cat{C}}
	\arrow[from=1-1, to=1-2]
	\arrow["{\ev_{\mathrm{leaves}}}"', from=1-1, to=2-1]
	\arrow["{\ev_0}", from=1-2, to=2-2]
	\arrow["{\bigotimes_{i=1}^n}"', from=2-1, to=2-2]
\end{tikzcd}\]
in $\Cat_{\infty}$, where the composite $\Alg_{C_{n}}\pr{\cat C}\to\Fun\pr{[1],\cat C}\xrightarrow{\ev_{1}}\cat C$ is equivalent to $\ev_{\mathrm{root}}$.
\end{cor}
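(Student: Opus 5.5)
The plan is to derive the pullback from Proposition~\ref{prop:C_n_model}, which identifies $\Alg_{C_n}(\cat C)$ with $\Fun_{\Fin_\ast}(([1],\mu_n),\cat C^\t)$. Evaluation at $\inp 1$ corresponds to $\ev_{\mathrm{root}}$. Evaluation at $0$, followed by the Segal equivalence $\cat C^\t_{\inp n}\simeq \cat C^{\times n}$, corresponds to $\ev_{\mathrm{leaves}}$; this is the diagram already used in the proof of Proposition~\ref{prop:root_cc}. So it suffices to exhibit $\Fun_{\Fin_\ast}(([1],\mu_n),\cat C^\t)$ as the pullback of $\cat C^\t_{\inp n}\xrightarrow{\otimes}\cat C\xleftarrow{\ev_0}\Fun([1],\cat C)$.

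\textbf{Step 1: use that $\cat C^\t\to\Fin_\ast$ is cocartesian.} Because $\cat C$ is symmetric monoidal, $p\colon\cat C^\t\to\Fin_\ast$ is a cocartesian fibration, and the cocartesian pushforward along $\mu_n$ is the functor $\bigotimes\colon\cat C^\t_{\inp n}\to\cat C$. For a cocartesian fibration, a section of $p$ over an edge $x\to y$ lying over $\mu_n$ factors essentially uniquely as a cocartesian lift $x\to(\mu_n)_!x$ followed by a morphism $(\mu_n)_!x\to y$ in the fiber $\cat C=\cat C^\t_{\inp1}$. Concretely, I would show that the functor
\[
\Fun_{\Fin_\ast}(([1],\mu_n),\cat C^\t)\longrightarrow \cat C^\t_{\inp n}\times_{\cat C}\Fun([1],\cat C),
\]
sending $F$ to $(F(0),\ (\mu_n)_!F(0)\to F(1))$, is an equivalence. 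The cleanest way is to identify $\Fun_{\Fin_\ast}(([1],\mu_n),\cat C^\t)$ with the $\infty$-category of arrows in the total space of the straightened functor $[1]\to\Cat_\infty$, namely $\inp n\mapsto\cat C^\t_{\inp n}$ and $\inp 1\mapsto\cat C$ with transition map $\otimes$. Equivalently, it is the lax limit of this functor, which is the standard comma/pullback $\cat C^\t_{\inp n}\times_{\cat C}\Fun([1],\cat C)$. This is the step I expect to be the main obstacle. I would handle it by citing the standard description of sections of cocartesian fibrations over $\Delta^1$ as lax limits, equivalently the mapping-space formula $\Map_{\cat E}(x,y)\simeq\Map_{\cat E_{\inp1}}(f_!x,y)$ over $\mu_n$. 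Applied functorially, this gives the equivalence after checking essential surjectivity and full faithfulness fiberwise over $\cat C^\t_{\inp n}\times\cat C$.

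\textbf{Step 2: match the legs.} Under the equivalence of Step~1, the projection to $\cat C^\t_{\inp n}$ is $\ev_0$, which corresponds to $\ev_{\mathrm{leaves}}$ after composing with $\cat C^\t_{\inp n}\simeq\cat C^{\times n}$. That identification transports the cocartesian pushforward to $\bigotimes_{i=1}^n$. The composite to $\Fun([1],\cat C)\xrightarrow{\ev_1}\cat C$ recovers $F(1)$, which is $\ev_{\mathrm{root}}$. Combining this with the trivial fibrations $\phi,\psi$ of Proposition~\ref{prop:C_n_model} yields the asserted pullback square in $\Cat_\infty$.
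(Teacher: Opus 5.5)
Your proposal is correct and follows the paper's route. The paper likewise uses Proposition~\ref{prop:C_n_model} to reduce to $\Fun_{\Fin_\ast}(([1],\mu_n),\cat C^\t)$, i.e.\ sections of the cocartesian fibration $\cat C^\t\times_{\Fin_\ast}[1]\to[1]$. It then proves your Step~1 as a separate lemma identifying these sections with $\cat A\times_{\cat B}\Fun([1],\cat B)$.

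The only difference is how that lemma is justified. Instead of citing the lax-limit description, the paper models the fibration by Lurie's relative nerve of $f$. There the canonical retraction onto $\cat B$ supplies the functorial pushforward your functor $F\mapsto(F(0),(\mu_n)_!F(0)\to F(1))$ implicitly needs, and the square becomes a strict pullback along categorical fibrations.
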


Corollary \ref{cor:C_n_pb} is a consequence of Proposition \ref{prop:C_n_model} and the following lemma:

\begin{lem}
Let $p\from\cat M\to[1]$ be a cocartesian fibration of $\infty$-categories with fibers $\cat A=p^{-1}\pr 0$ and $\cat B=p^{-1}\pr 1$.
Choose a cocartesian natural transformation $h\from\cat M\times[1]\to\cat M$ satisfying the following conditions:

\begin{itemize}
\item $h\vert\cat M\times\{0\}=\id_{\cat M}$.
\item $h\vert\cat M\times\{1\}$ factors through $\cat B$.
\end{itemize}
Set $r=h\vert\cat M\times\{1\}$ and $f=r\vert\cat A$.
The square 
\[\begin{tikzcd}
	{\Fun_{[1]}([1],\cat{M})} & {\Fun([1],\cat{B})} \\
	{\cat{A}} & {\cat{B}}
	\arrow["{r_\ast}", from=1-1, to=1-2]
	\arrow["{\ev_0}"', from=1-1, to=2-1]
	\arrow["{\ev_0}", from=1-2, to=2-2]
	\arrow["f"', from=2-1, to=2-2]
\end{tikzcd}\]
is a pullback in $\Cat_{\infty}$.
\end{lem}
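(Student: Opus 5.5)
We want to show that the square with vertices $\Fun_{[1]}([1],\cat M)$, $\Fun([1],\cat B)$, $\cat A$, $\cat B$ is cartesian. The plan is to exhibit both the top-left corner and the pullback $\cat A\times_{\cat B}\Fun([1],\cat B)$ as full subcategories of a common $\infty$-category and compare them there.

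First we identify the objects of $\Fun_{[1]}([1],\cat M)$: they are arrows $m\colon a\to b$ in $\cat M$ lying over $0\to 1$, with $a\in\cat A$ and $b\in\cat B$. Apply the cocartesian homotopy $h$ to such an arrow. This gives a commutative square in $\cat M$ whose top edge is $m\colon a\to b$ and whose bottom edge is $r(m)\colon f(a)\to r(b)$. The vertical edges are the components $h_a\colon a\to f(a)$, which is cocartesian over $0\to 1$, and $h_b\colon b\to r(b)$. Since $h_b$ is cocartesian over $\id_1$, it is an equivalence in $\cat B$. Up to the equivalence $h_b$, the arrow $m$ is therefore the composite of $r(m)$ with $h_a$.

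Next we turn this into a universal property via the defining property of cocartesian morphisms. Precomposition with the cocartesian arrow $h_a$ gives an equivalence
\[
\Map_{\cat M}(a,b)\simeq \Map_{\cat B}(f(a),b)
\]
for every $b\in\cat B$. Fiberwise over $\cat A$, this says that the functor $\ev_0\colon\Fun_{[1]}([1],\cat M)\to\cat A$ has fiber over $a$ equivalent to $\cat B_{f(a)/}$. That is precisely the fiber of $\ev_0\colon \Fun([1],\cat B)\to\cat B$ over $f(a)$.

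To make this global rather than merely fiberwise, we proceed as follows. Both $\ev_0\colon\Fun_{[1]}([1],\cat M)\to\cat A$ and $\ev_0\colon\Fun([1],\cat B)\to\cat B$ are cocartesian fibrations, with cocartesian edges those whose $\ev_1$-component is an equivalence. The comparison functor $\Fun_{[1]}([1],\cat M)\to\cat A\times_{\cat B}\Fun([1],\cat B)$, induced by $r_\ast$ and $\ev_0$, is a map over $\cat A$. It preserves cocartesian edges because $r$ preserves equivalences. A map of cocartesian fibrations over $\cat A$ that preserves cocartesian edges and is a fiberwise equivalence is an equivalence. So it suffices to check that the map on fibers over $a$ is an equivalence.

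That fiber map is $\cat M_{a/}\times_{\cat M}\cat B\to \cat B_{f(a)/}$, given by $m\mapsto r(m)$. By the square above, it is equivalent to the inverse of the equivalence induced by precomposing with $h_a$, composed with the equivalence $h_b$. Hence it is an equivalence.

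The main obstacle is bookkeeping: making precise that the fiber map induced by $r_\ast$ agrees with the "precompose with $h_a$" equivalence. This is exactly where the naturality of $h$, together with the fact that its components over $1$ are equivalences, has to be used carefully. For the cited Corollary, one then applies this lemma with $\cat M$ the cocartesian fibration $([1],\mu_n)\times_{\Fin_\ast}\cat C^\t\to[1]$ and $f=\bigotimes$, combined with Proposition \ref{prop:C_n_model} and the identification $\cat C^\t_{\inp n}\simeq\cat C^{\times n}$.
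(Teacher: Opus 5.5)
Your strategy works, but one claim is false as written: $\ev_0\colon\Fun([1],\cat B)\to\cat B$ and $\ev_0\colon\Fun_{[1]}([1],\cat M)\to\cat A$ are \emph{cartesian}, not cocartesian, fibrations. Their fibers $\cat B_{f(a)/}$ depend contravariantly on $a$, since a map $a\to a'$ acts by precomposition. Moreover, $\ev_0\colon\Fun([1],\cat B)\to\cat B$ is in general not a cocartesian fibration at all, because cocartesian lifts would have to be pushouts in $\cat B$. The edges you single out, those whose $\ev_1$-component is an equivalence, are precisely the cartesian ones. The criterion ``a map of cartesian fibrations over $\cat A$ that preserves cartesian edges and is a fiberwise equivalence is an equivalence'' holds just as well. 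So the repair is to replace ``cocartesian'' by ``cartesian'' in that paragraph; your check that the comparison functor preserves these edges goes through unchanged.

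You should also justify that $\ev_0\colon\Fun_{[1]}([1],\cat M)\to\cat A$ is a cartesian fibration. The cartesian lift of $g\colon a\to a'$ at $m\colon a'\to b$ is the square from $m\circ g$ to $m$ with $\id_b$ on the right; alternatively, apply Lemma~\ref{lem:pb} as in the proof of Proposition~\ref{prop:root_cc}.

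The bookkeeping you flag is routine. Naturality of $h$ gives $r(v)\circ h_a\simeq h_b\circ v$ naturally in $v\in\Map_{\cat M}(a,b)$. Hence $r\colon\Map_{\cat M}(a,b)\to\Map_{\cat B}(f(a),r(b))$ is identified with $(h_a^{*})^{-1}\circ(h_b)_{*}$, a composite of equivalences. Together with $r|_{\cat B}\simeq\id_{\cat B}$ (via $h|_{\cat B}$), this gives full faithfulness of the fiber functor. Essential surjectivity follows because $h_b\colon b\to r(b)$ is an equivalence.

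With that correction, your proof is a valid alternative to the paper's, which is model-specific. The paper replaces $\cat M$ by the relative nerve of $f$, whose universal property makes $\Fun_{[1]}([1],\cat M)$ literally equal to $\cat A\times_{\cat B}\Fun([1],\cat B)$. It takes $r$ to be the canonical retraction $\cat M\to\cat B$, so that $r_\ast$ becomes the projection. It then concludes because a strict pullback along the categorical fibration $\ev_0$ is a homotopy pullback. That argument is almost computation-free, but it tacitly uses that the square is insensitive to replacing $\cat M$ by an equivalent cocartesian fibration and $h$ by another admissible choice. This holds because the hypotheses say precisely that $h$ is the unit exhibiting $r$ as left adjoint to the inclusion $\cat B\hookrightarrow\cat M$, hence $h$ is essentially unique. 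Your route is model-independent and works with the given $h$ directly, using only the mapping-space characterization of the cocartesian edges $h_a$ and the fiberwise criterion for cartesian fibrations. The cost is the naturality bookkeeping above and the verification that both projections to $\cat A$ are cartesian fibrations. Conceptually, your fiberwise argument is the restriction to $\cat A$ of the equivalence of comma $\infty$-categories induced by the adjunction $r\dashv(\cat B\hookrightarrow\cat M)$.
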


\begin{proof}
Recall that over a $1$-category, there is an explicit model of unstraightening, called the relative nerve \cite[\S 3.2.5]{lurie_higher_2009}.
Without loss of generality, we may assume that $\cat M$ is the relative nerve of the functor $[1]\to\Cat_{\infty}$ classifying $f$.
This means that $\cat M$ is characterized by the following (1-categorical) universal property: Given an $\infty$-category $\cat X$ equipped with a map $\cat X\to[1]$, the data of a functor $\cat X\to\cat M$ over $[1]$ is equivalent to giving a commutative diagram
\[\begin{tikzcd}
	{\cat{X}_0} & {\cat{X}} \\
	{\cat{A}} & {\cat{B}.}
	\arrow[from=1-1, to=1-2]
	\arrow[from=1-1, to=2-1]
	\arrow[from=1-2, to=2-2]
	\arrow["f"', from=2-1, to=2-2]
\end{tikzcd}\]
Here, $\cat{X}_0$ denotes the strict fiber of $\cat{X}$ over $0\in [1]$.
In particular, a morphism of $\cat M$ lying over $0\to1$ consists of an object $A\in\cat A$ equipped with a morphism $\alpha\from f\pr A\to B$ in $\cat B$.
Such a morphism is cocartesian if and only if $\alpha$ is an equivalence.

There is a canonical retraction $\cat M\to\cat B$, and the above description of cocartesian morphisms shows that we can choose $r$ to be this natural retraction.
With this choice, the square in the statement is a strict pullback of quasi-categories.
Since its vertical arrows are categorical fibrations, this is in fact a homotopy pullback, and we are done.
\end{proof}

\bibliographystyle{amsalpha}
\bibliography{References}

\vspace{0.5cm}

\address{\textsc{K.A.: Research Institute for Mathematical Sciences, Kyoto University, Kitashirakawa-Oiwakecho, 606-8502
Kyoto (Japan)}} \newline \email{\texttt{karakawa@kurims.kyoto-u.ac.jp}}

\vspace{0.2cm}
\address{\textsc{V.C.: Max Planck Institute for Mathematics in the Sciences, Inselstrasse 22, 04103 Leipzig (Germany)}} \newline \email{\texttt{vcarmonamath@gmail.com}}

\vspace{0.2cm}
\address{\textsc{F.P.: Utrecht Geometry Center, Utrecht University, Hans Freudenthalgebouw
	Budapestlaan 6,	3584 CD Utrecht (The Netherlands)}} \newline \email{\texttt{ f.pratali@uu.nl}}
\end{document}